\newtheorem{theorem}{Theorem}[section]
\newtheorem{maintheorem}{Theorem}
\newtheorem{proposition}[theorem]{Proposition}
\newtheorem{corollary}[theorem]{Corollary}
\newtheorem{lemma}[theorem]{Lemma}
\newtheorem{definition}[theorem]{Definition}
\newtheorem*{conjecture}{Conjecture}
\newtheorem{conjecturen}{Conjecture}
\newtheorem{remark}[theorem]{Remark}
\newtheorem{q}{Question}
\newcommand{\T}{\mathbb{T}}
\newcommand{\Z}{\mathbb{Z}}
\newcommand{\N}{\mathbb{N}}
\newcommand{\R}{\mathbb{R}}
\newcommand{\C}{\mathscr{C}}
\newcommand{\address}{{
  \bigskip
  \footnotesize

  \textsc{Department of Mathematics, CNRS UMR 8088, Universit\'e de Cergy-Pontoise, 2, av.  Adolphe Chauvin F-95302 Cergy-Pontoise, France } \par\nopagebreak
}}
\title{Uniqueness of the measure of maximal entropy for the standard map}
\author{Davi Obata\footnote{D.O. was supported by the projects ANR BEKAM : ANR-15-CE40-0001 and ERC project 692925 NUHGD.}
}
\date{}
\begin{document}

\maketitle
\begin{abstract}
In this paper we prove that for sufficiently large parameters the standard map has a unique measure of maximal entropy (m.m.e.). Moreover, we prove: the m.m.e. is Bernoulli, and the periodic points with Lyapunov exponents bounded away from zero equidistribute with respect to the m.m.e. We prove some estimates regarding the Hausdorff dimension of the m.m.e. and about the density of the support of the measure on the manifold. For a generic large parameter, we prove that the support of the m.m.e. has Hausdorff dimension $2$. We also obtain the $C^2$-robustness of several of these properties.
\end{abstract}
\setcounter{tocdepth}{1}
\tableofcontents

\section{Introduction}

A general goal in dynamical systems is to understand the asymptotic statistical and topological behavior of the orbits of a system. In general, this is a hard problem even for systems having a simple expression.

An example of a dynamical system with simple expression but having complex behavior is given by the famous standard map (or Taylor-Chirikov standard map). Considering $\T^2 = \R^2/ \Z^2$, with coordinates $(x,y) \in [0,1)^2$, for each $k\in \R$, the standard map is defined as 
\[\begin{array}{rcc}
f_k: \T^2 & \longrightarrow & \T^2\\
(x,y) & \mapsto & (2x - y + k \sin(2\pi x), x).
\end{array}
\] 
This diffeomorphism was introduced independently by Taylor and Chirikov (see \cite{chirikovstandard}), and it is related to many physical problems (see for instance \cite{chirikovstandard, istandard, sstandard}). For every $k$ the diffeomorphism $f_k$ preserves the usual Lebesgue measure on $\T^2$. A famous conjecture made by Sinai is the following:

\begin{conjecturen}[\cite{sinaibook}, page $144$]
\label{conjecture.sinai}
For $k$ large enough, the standard map $f_k$ has positive metric entropy for the Lebesgue measure. 
\end{conjecturen}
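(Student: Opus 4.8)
The plan is to reduce positivity of the metric entropy to nonuniform hyperbolicity of the Lebesgue measure, and then to establish the latter for large $k$ by a quantitative analysis of the ``critical region'' of $f_k$ together with a parameter-exclusion argument in the spirit of Jakobson and of Benedicks--Carleson. Since $\mathrm{Leb}$ is $f_k$-invariant and $f_k$ is $C^\infty$ (in particular $C^{1+\alpha}$), Pesin's entropy formula gives $h_{\mathrm{Leb}}(f_k)=\int_{\T^2}\lambda^+\,d\mathrm{Leb}$, where $\lambda^+\geq 0$ is the upper Lyapunov exponent and $\lambda^-=-\lambda^+$ by area preservation. Hence it suffices to show that for $k$ large the set $\{\lambda^+>0\}$ has positive measure; I will in fact aim for the quantitative bound $\int\lambda^+\,d\mathrm{Leb}\gtrsim\log k$, matching Chirikov's prediction $h\approx\log(k/2)$.

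The second step is to make precise the hyperbolicity away from the fold. With $Df_k(x,y)=\bigl(\begin{smallmatrix}2+2\pi k\cos 2\pi x & -1\\ 1 & 0\end{smallmatrix}\bigr)$, fix $\delta=\delta(k)$ (for instance $\delta=k^{-1/2}$), and set $G_k=\{|\cos 2\pi x|>\delta\}$, the ``good region'', and $C_k=\T^2\setminus G_k$, a union of two thin vertical strips near $x\in\{1/4,3/4\}$ of total width $O(\delta)$. There is a horizontal cone field which is invariant and uniformly expanded by a factor $\gtrsim k\delta\gg 1$ along any orbit segment staying in $G_k$; the only obstruction to uniform hyperbolicity is passages through $C_k$, where the expanding and contracting cones cross and $f_k$ behaves like a quadratic fold $x\mapsto a-x^2$. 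The first genuinely technical point is a normal-form estimate of this fold, quantifying how much the unstable cone is ``bent'' by a passage through $C_k$, and the resulting (possibly negative) contribution to $\log\|Df_k|_{\mathrm{cone}}\|$ as a function of the depth of approach $|\cos 2\pi x|$.

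The heart of the matter is then a recurrence estimate: positivity of $\int\lambda^+$ amounts to showing that a Lebesgue-typical orbit spends little enough time near $C_k$ for the expansion accumulated in $G_k$ to dominate the losses in $C_k$. Since we are precisely trying to establish statistical properties of $\mathrm{Leb}$, we cannot invoke them, which is the classical chicken-and-egg obstruction; the plan is to break it by an inductive parameter exclusion. Cover $\T^2$ by a fine grid and, inductively over time scales $n$, let $\Delta_n$ be the set of parameters for which every grid point has, up to time $n$, only slowly shrinking approaches to $C_k$ (say $\mathrm{dist}(f_k^j(\,\cdot\,),C_k)\gtrsim e^{-\varepsilon j}$). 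Using transversality of the maps $k\mapsto f_k^n(p)$ to the critical strips, together with bounded-distortion estimates along the already-hyperbolic orbit segments, one shows that $|\Delta_n\setminus\Delta_{n+1}|$ is summable, so $\Delta_\infty=\bigcap_n\Delta_n$ is a large set of parameters (positive measure, in fact of full density at $+\infty$). For $k\in\Delta_\infty$ a Borel--Cantelli argument transfers the control from the grid to Lebesgue-a.e.\ orbit, and combining with the fold estimate of the second step produces a uniform positive lower bound for $\lambda^+$ on a positive-measure set, hence $h_{\mathrm{Leb}}(f_k)>0$.

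The main obstacle, I expect, is upgrading ``$k\in\Delta_\infty$'' to ``all $k$ large enough'', as the conjecture demands. One route is to show $\Delta_\infty$ is co-null with full density at infinity and then argue separately that the excluded parameters still have $h_{\mathrm{Leb}}>0$; this is delicate, since elliptic islands do occur for a prevalent set of large $k$, although an island is perfectly compatible with $\int\lambda^+>0$. The cleaner but harder route is to make the cone and fold estimates \emph{deterministic} for every large $k$: replace the probabilistic parameter exclusion by a direct geometric description of the critical locus of $f_k$ itself --- the curve along which the expanding and contracting cone fields become tangent --- together with a uniform control of the sizes and the recurrence of its forward images, analogous to the global analysis of the critical set in the Hénon family. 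Carrying this global critical-set analysis through for the standard family is, I believe, the crux of the whole problem.
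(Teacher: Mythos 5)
The statement you have been asked to prove is Sinai's conjecture, and the paper does not prove it: it is quoted from Sinai's book precisely as an open problem, and the paper states explicitly that it is not even known whether there exists a single parameter $k$ for which $f_k$ has positive metric entropy (the paper's actual theorems concern the measure of maximal entropy, not the Lebesgue measure). So there is no proof in the paper to compare yours with, and the only question is whether your proposal closes the problem on its own. It does not. The first step (Pesin's formula reducing positive entropy to $\mathrm{Leb}\{\lambda^+>0\}>0$) is correct and standard, and is the same reduction the paper mentions; everything after that is a program whose key steps are precisely the unsolved difficulties. The summability of $|\Delta_n\setminus\Delta_{n+1}|$ and the transfer "from the grid to Lebesgue-a.e.\ orbit" are asserted, not proved, and they are not minor technicalities: in the dissipative H\'enon/Benedicks--Carleson setting the induction relies on strong area contraction to confine the dynamics near a one-dimensional unstable manifold and to track a finite set of critical points with bounded distortion, whereas the standard map is area preserving, has no attractor, and its critical locus is a curve whose iterates spread over all of $\T^2$; no analogue of the binding/critical-orbit machinery is known in this conservative setting, which is exactly why the conjecture has resisted. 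Moreover, for a residual set of large parameters elliptic islands accumulate on the hyperbolic set (Duarte, Gorodetski, De Simoi, all cited in the paper), so any control of typical orbits near the critical strips must be purely measure-theoretic; your proposal offers no mechanism for this beyond the exclusion scheme itself, which is circular in the sense you yourself flag (one needs statistical control of Lebesgue-typical orbits to run the induction whose purpose is to establish that control).

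There is also a mismatch with the statement even at the level of what the strategy could deliver: a Jakobson/Benedicks--Carleson parameter exclusion, if it worked, would yield a positive-measure (perhaps full-density) set of good parameters $\Delta_\infty$, not "all $k$ large enough" as the conjecture demands. Your last paragraph acknowledges this and defers the fix to a deterministic global analysis of the critical set, which you correctly identify as "the crux of the whole problem"---that is, the proposal by its own admission leaves the essential gap open. As a point of comparison, the paper sidesteps the Lebesgue-measure question entirely: it studies measures that are already known to be very hyperbolic (ergodic measures with both exponents larger than $(1-\delta)\log k$, which includes every measure of near-maximal entropy by Ruelle's inequality), builds quantitative Pesin stable and unstable manifolds for them, and shows any two such measures are homoclinically related, yielding uniqueness of the m.m.e.\ via Buzzi--Crovisier--Sarig. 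That argument never needs to control Lebesgue-typical orbits near the critical region, which is exactly the step your proposal cannot supply.
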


This conjecture remains open. Indeed, it is not known if there exists one parameter $k$ for which the standard map has positive metric entropy. By Pesin's entropy formula \cite{ch1pesin77}, this is equivalent on the existence of a set, of positive volume, of points having a positive Lyapunov exponent\footnote{See Section \ref{sec.prel} for the definition of Lyapunov exponents.}.

 Recently, Berger and Turaev proved (in \cite{bergerturaev}) that the standard map can be $C^{\infty}$-approximated by volume preserving diffeomorphisms having positive metric entropy. Another type of result in the theory is the positivity of the Lyapunov exponent for certain types of ``random'' perturbations of the standard map. In \cite{bluxueyoung}, Blumenthal, Xue and Young proved the positivity of the Lyapunov exponent for some random perturbations of the standard map; and in \cite{bergercarrasco} Berger and Carrasco proved the non-uniform hyperbolicity of a partially hyperbolic skew product derived from the standard map.

In general, the dynamics of the standard map itself is not well understood. Let us mention some of the known results. Since for $k=0$ the standard map is completely integrable, due to KAM theory, for small parameters many invariant circles persist. For such parameters there are some results for the standard map regarding separation of separatrices \cite{gelfreich99, gellaz}, invariant Cantor sets \cite{mmp}, and others (see \cite{delallave} for more references).

Duarte proved in \cite{duarte} that for a generic large parameter, there exists a ``topologically large'' uniformly hyperbolic set for the standard map which is accumulated by elliptic islands\footnote{See Section \ref{sec.standardback} for more details.}. Duarte's result was improved by Gorodetski in \cite{gorodetski}, who proved, for a generic large parameter, the existence of an increasing sequence of uniformly hyperbolic sets whose Hausdorff dimension converges to $2$ and which is accumulated by elliptic islands\footnote{See Theorem \ref{thm.gorod} for a precise statement.}. In 2013, Duarte's result was also further improved by De Simoi in \cite{desimoi}, where he proved that on one hand the set of sufficiently large parameters having infinitely many elliptic islands of a certain type (called cyclicity-one) has zero Lebesgue measure; on the other hand this same set of parameters contains a residual set and has positive Hausdorff dimension. See also \cite{matheusmoreirapalis, bloorluzzatto} for some other results for the standard map. In particular, these results illustrates some of the difficulties from studying the standard map, since elliptic islands will be accumulating on hyperbolic sets.

The goal of this work is to contribute for the understanding of dynamical properties of the standard map for large parameters. In particular, we prove the uniqueness of the measure of maximal entropy, and several other properties for this measure.

\subsection*{Measures of maximal entropy and main results}

For a diffeomorphism $f:M\to M$, a probability measure $\mu$ is \textbf{invariant} if for any measurable set $B$ we have $\mu(B) = \mu(f^{-1}(B))$. An invariant probability measure is \textbf{ergodic} if any measurable $f$-invariant set has $\mu$-measure $0$ or $1$. Let $\mathbb{P}_e(f)$ be the set of ergodic invariant probability measures for $f$. The well known variational principle (see for instance \cite{mane}) states that
\[
h_{\mathrm{top}}(f) = \sup_{\mu \in \mathbb{P}_e(f)} h_{\mu}(f),
\] 
where $h_{\mathrm{top}}(f)$ is the \textbf{topological entropy} of $f$, and $h_{\mu}(f)$ is the \textbf{metric entropy} of $f$ with respect to $\mu$\footnote{See \cite{mane} for the precise definition of topological and metric entropy.}.

\begin{definition}
An invariant measure $\mu$ for $f$ is a \textbf{measure of maximal entropy} (or \textbf{m.m.e.}) if $h_{\mathrm{top}}(f) = h_{\mu}(f)$. 
\end{definition}

Ergodic measures of maximal entropy are important in the theory. They may give, for instance, information about the asymptotic growth and equidistribution of periodic points (see for instance \cite{burguet}). Since almost every ergodic component of a measure of maximal entropy is also a measure of maximal entropy, it is a natural problem to understand about the finiteness of ergodic measures of maximal entropy. The study of existence and finiteness of ergodic measures of maximal entropy has a long history, which we will not try to state here. We refer the reader to Section $1.9$ in \cite{buzzicrovisiersarig} for many references on the history of this problem.

Newhouse proved in \cite{newhouse} that a $C^{\infty}$ system always have at least one ergodic m.m.e. In particular, for any $k\in \R$, the standard map $f_k$ has at least one m.m.e. In a remarkable recent work, Buzzi, Crovisier and Sarig obtained that a $C^{\infty}$-diffeomorphism of a compact surface with positive topological entropy has at most finitely many ergodic m.m.e. Moreover, if the diffeomorphism is transitive\footnote{Recall that $f$ is transitive if there exists a point with dense orbit.} then there is only one m.m.e. (see \cite{buzzicrovisiersarig}). Since for a generic large parameter the standard map has elliptic islands (see \cite{duarte}), it is not transitive for these parameters. In particular, Buzzi-Crovisier-Sarig's result does not give the uniqueness of the m.m.e. for the standard map. 

A natural question is to know if for sufficiently large parameters, the standard map has a unique m.m.e. Recall that an $f$-invariant measure $\mu$ is \textbf{Bernoulli} for $f$, if $(f,\mu)$ is isomorphic to a Bernoulli shift. Our main result is the following.

\begin{maintheorem}
\label{thm.maintheoremuniqueness}
There exists $k_0\in \R$ such that for $k\in [k_0, +\infty)$, the standard map $f_k$ has a unique m.m.e. which is Bernoulli. Moreover, this property is $C^2$-robust, that is, if $g\in \mathrm{Diff}^2(\T^2)$ is sufficiently $C^2$-close to $f_k$, then $g$ has at most one m.m.e., and if such a measure exists for $g$, then it is also Bernoulli.
\end{maintheorem}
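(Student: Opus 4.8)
The plan is to run the standard recipe for uniqueness of the m.m.e. that goes back to Buzzi--Crovisier--Sarig and Sarig's symbolic dynamics for surface diffeomorphisms, but to supply the missing ingredient — a form of transitivity on the ``hyperbolic part'' of the dynamics — by hand, using the structure of the standard map for large $k$. First I would establish that for $k$ large the topological entropy of $f_k$ is positive and that, by the Katok/Sarig machinery, any m.m.e. $\mu$ is hyperbolic, i.e. its Lyapunov exponents are bounded away from $0$ (this should follow from the fact that zero-exponent measures carry zero entropy in the relevant regime, combined with the a priori entropy lower bound $h_{\mathrm{top}}(f_k)\gtrsim \log k$). With hyperbolicity in hand, I would invoke the Sarig-type coding: every hyperbolic m.m.e. is carried by a locally compact countable Markov shift, and on each irreducible component such a shift has a unique m.m.e. (Gurevich). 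So the whole problem reduces to showing that all ergodic m.m.e.'s of $f_k$ are ``homoclinically related'' — equivalently, that there is a single irreducible component carrying all the entropy.

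The heart of the argument, and the step I expect to be the main obstacle, is this homoclinic-relation / irreducibility statement: I must rule out the a priori possibility that $f_k$ has two distinct ergodic m.m.e.'s supported on disjoint hyperbolic pieces. The tool I would use is Duarte's and Gorodetski's description of the dynamics for large $k$ (Theorem~\ref{thm.gorod} and the discussion in Section~\ref{sec.standardback}): for large $k$ there is a dominating ``horseshoe-like'' hyperbolic set $\Lambda_k$ whose topological entropy is already $\geq h_{\mathrm{top}}(f_k) - o(1)$, in fact I expect one can arrange $h_{\mathrm{top}}(f_k|_{\Lambda_k}) = h_{\mathrm{top}}(f_k)$ because the complement (elliptic islands and their vicinity) carries strictly smaller entropy. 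I would then argue that any ergodic m.m.e. must be supported inside (the closure of) this hyperbolic set, and that this set is transitive — or at least that its maximal-entropy subsystem is transitive — so that all ergodic m.m.e.'s live on one transitive hyperbolic basic set, hence coincide by Bowen's theorem. Making the entropy gap between $\Lambda_k$ and its complement quantitative, and controlling the geometry near the elliptic islands (which accumulate on $\Lambda_k$), is where the real work lies; I would lean on the quantitative hyperbolicity estimates for the standard map that force the ``width'' of any invariant region outside the hyperbolic set to be exponentially small in $k$.

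Once uniqueness is established, the Bernoulli property is essentially automatic: the unique m.m.e. $\mu$ is a hyperbolic, ergodic equilibrium state, and by the Ledrappier--Ornstein-theoretic results available for surface diffeomorphisms (and already used in this circle of ideas — mixing of the m.m.e. via the countable-Markov-shift coding, then Chernov--Haskell / Ornstein--Weiss), a mixing hyperbolic m.m.e. for a $C^\infty$ (indeed $C^2$) surface diffeomorphism is Bernoulli. Concretely I would: (i) show $(f_k,\mu)$ is mixing — if $\mu$ were only finitely-to-one but not mixing, its support would split into a cyclic family of sets, contradicting either the irreducibility of the coding or the structure of $\Lambda_k$; (ii) conclude Bernoullicity from mixing plus hyperbolicity via the symbolic model, which is a Bernoulli scheme up to the finite-to-one factor map and a Markov partition argument.

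For the $C^2$-robustness I would check that every step above is open in the $C^2$ topology: positivity of topological entropy is $C^0$-robust, the existence and location of the dominating hyperbolic set $\Lambda_k$ (being uniformly hyperbolic) persists under $C^1$, hence $C^2$, perturbations, the entropy gap between $\Lambda_k$ and its complement is lower-semicontinuous enough to survive a small perturbation, and the Sarig coding together with Buzzi--Crovisier--Sarig's finiteness result applies to any $C^\infty$ (and, for the ``at most one'' conclusion, $C^2$ suffices because we only need the upper bound on the number of m.m.e.'s together with the transitivity-of-the-hyperbolic-core input). Thus if $g$ is $C^2$-close to $f_k$, any m.m.e. of $g$ is hyperbolic, supported on the continuation of the transitive hyperbolic core, hence unique and Bernoulli by the same argument; if no m.m.e. happened to be carried there we would simply get ``at most one,'' matching the statement.
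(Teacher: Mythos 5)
The framework you set up (Ruelle's inequality plus Duarte's entropy lower bound to force hyperbolicity of any m.m.e., then the Sarig/Buzzi--Crovisier--Sarig coding to reduce uniqueness to a homoclinic-relation statement) matches the paper. But the step you yourself flag as the heart of the matter is where the proposal breaks down: you propose to show that every ergodic m.m.e. is supported in (the closure of) Duarte's uniformly hyperbolic set $\Lambda_k$, using an ``entropy gap'' between $\Lambda_k$ and its complement, and then to conclude by Bowen's theorem on a transitive basic set. Neither ingredient is available. Duarte's theorem only gives $h_{\mathrm{top}}(f_k|_{\Lambda_k})=\log 2n_k$ with $2n_k/4k\to 1$, i.e.\ roughly $\log 4k$, while the only a priori upper bound on $h_{\mathrm{top}}(f_k)$ is $\log\|Df_k\|\le \log 4\pi k$; nothing rules out $h_{\mathrm{top}}(f_k)>h_{\mathrm{top}}(f_k|_{\Lambda_k})$, in which case \emph{no} m.m.e. lives on $\Lambda_k$. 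More fundamentally, there is no known control on the entropy of the dynamics outside $\Lambda_k$ (the complement is exactly the non-uniformly hyperbolic region interspersed with elliptic islands that makes the standard map hard), so the claim that the complement ``carries strictly smaller entropy'' is an unproved assertion, not a reduction. Indeed, the paper's Theorems \ref{thm.maintheoremdensity} and \ref{thm.genericparamenters} show the m.m.e. has Hausdorff dimension $>2-\varepsilon$ and, generically, support of dimension $2$, which is incompatible with it being carried by a fixed horseshoe of the Duarte type; its support is the (typically much larger, non-uniformly hyperbolic) homoclinic class $\mathrm{HC}(\mathcal{O})$.

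What the paper does instead — and what is missing from your proposal — is a quantitative Pesin-theoretic argument showing that \emph{any} ergodic measure with Lyapunov exponents larger than $(1-\delta)\log k$ (not just measures on $\Lambda_k$) has, on a set of measure bounded below, stable and unstable manifolds of size at least $k^{-7}$ with controlled tangent cones (via Pliss's lemma and the Crovisier--Pujals construction), and that iterating these manifolds forward/backward about $15$ times produces curves of length $>4$ stretching across the torus transverse to the vertical/horizontal directions (Lemma \ref{bigmnfld}). This forces transverse intersections between the invariant manifolds of any two high-entropy measures, i.e.\ Theorem \ref{thm.hommeasures}, which is the irreducibility input your outline assumes. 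Your Bernoulli step and the $C^2$-robustness discussion are in the right spirit (the paper gets Bernoullicity from the cyclic decomposition in Theorem \ref{thm.criteriabcs} plus applying the homoclinic relation to $f^l$, and robustness from the openness of the estimates \eqref{eq.c1norm}--\eqref{eq.c1c2}), but both rest on the homoclinic-relation theorem that the proposal does not actually supply.
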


We can actually obtain several other information about the m.m.e. in a neighborhood of the standard map. 

For a given diffeomorphism $f$, for each $\rho>0$ and $n\in \N$, let $\mathrm{Per}^{\rho}_n(f)$ be the set of periodic points of period $n$ whose Lyapunov exponents are bounded away from zero by at least $\rho$ (see Section \ref{subsec.pesintheory} for precise definitions). A point in $\mathrm{Per}^{\rho}_n(f)$ is called a \textbf{$\rho$-hyperbolic periodic point}. We obtain the following result.

\begin{maintheorem}
\label{thm.maintheoremperiodicpoints}
Let $k$ and $\mathcal{U}$ be as in Theorem \ref{thm.maintheoremuniqueness}, and let $g\in \mathcal{U}\cap \mathrm{Diff}^{\infty}(\T^2)$. Then $g$ has a unique m.m.e., $\mu_g$ , that verifies the following: for any $\rho\in (0, h_{\mathrm{top}}(g))$,
\begin{enumerate}
\item \textbf{Growth rate of the $\rho$-hyperbolic periodic points}:
\[
\displaystyle \lim_{n \to +\infty} \frac{1}{n} \log \# \mathrm{Per}^{\rho}_n(g) = h_{\mathrm{top}}(g).
\]
\item \textbf{Equidistribution of $\rho$-hyperbolic periodic orbits with respect to $\mu_g$}: 
\[
\displaystyle \lim_{n\to +\infty} \frac{1}{\# \mathrm{Per}^{\rho}_n(g)} \left( \sum_{p\in \mathrm{Per}^{\rho}_n(g)} \delta_{p} \right) = \mu_g,
\]
where $\delta_p$ is the dirac mass on $p$, and the limit above is for the $\mathrm{weak}^*$-topology.
\end{enumerate}
\end{maintheorem}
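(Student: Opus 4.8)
The plan is to deduce both statements from the uniqueness of the m.m.e. established in Theorem A, combined with the general machinery for equidistribution of hyperbolic periodic orbits that is available for $C^\infty$ surface diffeomorphisms with positive topological entropy. First I would invoke the structure obtained in the proof of Theorem A: the unique m.m.e. $\mu_g$ is hyperbolic (its Lyapunov exponents are nonzero $\mu_g$-a.e.), and in fact one should have a uniform lower bound $\lambda>0$ on the positive exponent coming from the cone-field / hyperbolicity estimates that made Theorem A work for large parameters. This hyperbolicity is what lets one apply Katok's theory: the entropy $h_{\mathrm{top}}(g)=h_{\mu_g}(g)$ is approximated from below by the topological entropy of uniformly hyperbolic horseshoes $\Lambda_n\subset M$, whose periodic points have Lyapunov exponents bounded uniformly away from zero. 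This already gives the lower bound $\liminf_n \frac1n\log\#\mathrm{Per}^\rho_n(g)\ge h_{\mathrm{top}}(g)$ for every $\rho$ below some fixed $\rho_0$; extending it to all $\rho\in(0,h_{\mathrm{top}}(g))$ requires knowing that $\mu_g$'s exponent is in fact large, or else a separate argument that $\rho$-hyperbolic periodic points for large $\rho$ are still exponentially numerous — which is where the specific estimates from Theorem A's proof (uniform hyperbolicity away from a small ``critical'' region) enter.

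For the upper bound in item (1) and for the equidistribution in item (2), I would use the results of Buzzi--Crovisier--Sarig on the symbolic dynamics of $C^\infty$ surface diffeomorphisms: the m.m.e., being unique and hyperbolic, is coded by an irreducible countable-state Markov shift, and for such shifts the periodic orbits carrying most of the entropy equidistribute toward the measure of maximal entropy of the shift, which pushes forward to $\mu_g$. Concretely, one writes $\mathrm{Per}^\rho_n(g)$ as a disjoint union over the finitely many ``relevant'' irreducible components plus an error term of strictly smaller exponential growth (periodic points near the islands or near the critical region are too few to matter, by Katok / Ruelle-type entropy bounds), and then applies the counting and equidistribution statement on each component. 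Since $\mu_g$ is the unique m.m.e., only the component of maximal entropy survives in the limit, giving both $\frac1n\log\#\mathrm{Per}^\rho_n(g)\to h_{\mathrm{top}}(g)$ and $\frac{1}{\#\mathrm{Per}^\rho_n(g)}\sum_{p}\delta_p\to\mu_g$ in the weak$^*$ topology.

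The main obstacle I anticipate is the bookkeeping needed to guarantee that $\rho$-hyperbolic periodic points with a \emph{prescribed} lower bound $\rho$ on the exponents — rather than merely ``some'' positive exponent — still realize the full growth rate $h_{\mathrm{top}}(g)$, uniformly over all $\rho<h_{\mathrm{top}}(g)$. This needs an a priori lower bound on the Lyapunov exponent of the m.m.e. (or of the dominating horseshoes) that is comparable to $h_{\mathrm{top}}(g)$ itself; for large parameters one expects $h_{\mathrm{top}}(f_k)\approx\log k$ and the hyperbolicity exponent to be of the same order, so the condition $\rho<h_{\mathrm{top}}$ should be automatically satisfiable, but making this precise requires quantitative input from the proof of Theorem A. A secondary technical point is ensuring the error term — periodic points that are $\rho$-hyperbolic but lie outside the coding, e.g. near elliptic islands — has strictly smaller exponential growth; this follows from the fact that such points cannot shadow the relevant horseshoes and hence their count is controlled by the entropy of a proper subsystem, but it must be checked that the $C^2$-neighborhood $\mathcal{U}$ can be taken small enough for this to persist. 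Everything else is an application of the now-standard Buzzi--Crovisier--Sarig and Katok toolkit once uniqueness and hyperbolicity are in hand.
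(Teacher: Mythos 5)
Your lower-bound strategy (Katok horseshoes approximating $h_{\mu_g}(g)=h_{\mathrm{top}}(g)$, with the exponents of their periodic points forced above $\rho$) is essentially the paper's, and the worry you raise about arbitrary $\rho\in(0,h_{\mathrm{top}}(g))$ is resolved softly by Ruelle's inequality ($\min\{\lambda^+(\mu_g),-\lambda^-(\mu_g)\}\ge h_{\mathrm{top}}(g)>\rho$) together with item (2) of the refined Katok theorem (Theorem 2.12 in \cite{buzzicrovisiersarig}), not by quantitative input from Theorem \ref{thm.maintheoremuniqueness}. But as stated your lower bound has a gap: a basic set need not carry periodic points of \emph{every} large period (only of periods in an arithmetic progression), so approximating horseshoes do not by themselves give $\liminf_{n}\frac{1}{n}\log\#\mathrm{Per}^{\rho}_n(g)\ge h_{\mathrm{top}}(g)$ over all $n$. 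The paper fixes this by using the Bernoulli property of $\mu_g$ from Theorem \ref{thm.maintheoremuniqueness}: since $(g,\mu_g)$ is mixing, the Katok horseshoe can be taken topologically mixing, and Bowen's counting theorem then gives an honest limit $\lim_n\frac{1}{n}\log\#\mathrm{Per}_n(g|_{\Lambda_\varepsilon})=h_{\mathrm{top}}(g|_{\Lambda_\varepsilon})$.

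The more serious gap is the upper bound in item (1), and with it item (2). You propose to write $\mathrm{Per}^{\rho}_n(g)$ as the coded periodic orbits in finitely many irreducible components plus an ``error term of strictly smaller exponential growth'', justified by ``Katok / Ruelle-type entropy bounds''. No such soft bound exists: there is no a priori exponential bound on the number of periodic points of a $C^\infty$ surface diffeomorphism (growth can be super-exponential), and entropy arguments do not control the cardinality of a family of periodic orbits that does not carry an invariant measure of large entropy; moreover the Sarig coding is only finite-to-one with unbounded multiplicity across the countably many states, so counting in the shift does not directly bound counting in the manifold. The inequality $\limsup_n\frac{1}{n}\log\#\mathrm{Per}^{\rho}_n(g)\le h_{\mathrm{top}}(g)$ for every $\rho>0$ is precisely Burguet's periodic expansiveness theorem \cite{burguet} (proved with Yomdin-type tools), and it is the essential external input of the paper's proof; the same theorem supplies the equidistribution along any subsequence realizing the growth rate, which combined with uniqueness of the m.m.e.\ yields item (2). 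Your sketch in effect assumes this result rather than proving it.
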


We remark that the reason why in Theorem \ref{thm.maintheoremperiodicpoints} we can only take $g\in \mathrm{Diff}^{\infty}(\T^2)$ is because in the proof we use a recent result by Burguet in \cite{burguet}, which requires $C^{\infty}$-regularity.

Using some estimates given by Duarte in \cite{duarte}, we may also obtain more information about how ``large'' the m.m.e. is on the manifold. In what follows, given a subset $X$ of the manifold $\T^2$, we write $\mathrm{dim}_H(X)$ the Hausdorff dimension of $X$. The Hausdorff dimension of a probability measure $\mu$ is defined as
\begin{equation}
\label{eq.hausmeasure}
\mathrm{dim}_H(\mu) := \inf \{dim_H(X): \mu(X) =1\}.
\end{equation}
We obtain the following.

\begin{maintheorem}
\label{thm.maintheoremdensity}
For every $\varepsilon >0$, there exists $k_0\in \R$ such that for any $k\in [k_0,+\infty)$ the following holds: there is a $C^2$-neighborhood $\mathcal{U}$ of $f_k$ such that for any $g\in \mathcal{U}$, if $\mu_g$ is a m.m.e. for $g$ then:
\begin{enumerate}
\item \textbf{Dimension of $\mu_g$:} $\mathrm{dim}_H(\mu_g) > 2-\varepsilon$.
\item \textbf{Density of $\mathrm{supp}(\mu_g)$:} the support of $\mu_g$ is $\frac{8}{k^{\frac{1}{3}}}$-dense in $\T^2$.  
\end{enumerate}
\end{maintheorem}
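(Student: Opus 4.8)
The plan is to combine two ingredients: first, the uniqueness and hyperbolicity structure of the m.m.e. already established in Theorems A and B; second, Duarte's quantitative analysis of the uniformly hyperbolic horseshoes that carry most of the topological entropy of $f_k$ for large $k$. The key observation is that for large $k$ the standard map has a dominated splitting (away from a thin critical region where $|\cos(2\pi x)|$ is small), and on the relevant hyperbolic set the expansion and contraction rates are of order $\log k$, while the geometry of the stable and unstable cones is controlled with explicit constants in $1/k$. I would first recall, from Section \ref{sec.standardback} and \cite{duarte}, the uniformly hyperbolic set $\Lambda_k$ together with the bounds on its stable/unstable Lyapunov exponents, the angle between the invariant subbundles, and the fact that $h_{\mathrm{top}}(f_k|_{\Lambda_k}) \to \infty$ (indeed comparable to $\log k$) as $k\to\infty$. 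Since the m.m.e. $\mu_g$ of any $g$ in a small $C^2$-neighborhood is hyperbolic and, by the results used for Theorems A and B, is ``close'' to being supported on the continuation of $\Lambda_k$ (its entropy forces it to live near the dominated region), I would show that the Lyapunov exponents of $\mu_g$ are bounded away from $0$ with a lower bound that can be made large relative to any fixed error by taking $k$ large.

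For part (1), the Hausdorff dimension estimate, the strategy is to apply the Ledrappier--Young / dimension formula for hyperbolic measures on surfaces: for an ergodic hyperbolic measure $\mu_g$ of a $C^2$ surface diffeomorphism, $\dim_H(\mu_g) = h_{\mu_g}(g)\left(\frac{1}{\lambda^+} + \frac{1}{-\lambda^-}\right)$, where $\lambda^\pm$ are the positive and negative Lyapunov exponents (this is the Young dimension formula for surfaces). Since $h_{\mu_g}(g) = h_{\mathrm{top}}(g)$ is comparable to $\log k$ (it is at least $h_{\mathrm{top}}(f_k|_{\Lambda_k})$ by the semicontinuity/robustness arguments and at most $\log(\text{const}\cdot k)$ from the derivative bound $\|Df_k\| = O(k)$), and since $\lambda^+ \le \log\|Dg\| \le \log k + O(1)$ while $\lambda^+ \ge$ something comparable to $\log k$ (and symmetrically for $\lambda^-$ using that $g$ is volume-preserving, so $\lambda^+ + \lambda^- = 0$ and $\lambda^+ = -\lambda^-$), the ratio $h_{\mu_g}(g)/\lambda^+$ tends to $1$ as $k\to\infty$. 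Hence $\dim_H(\mu_g) = 2\,h_{\mu_g}(g)/\lambda^+ \to 2$, which gives $\dim_H(\mu_g) > 2 - \varepsilon$ for $k$ large. One subtlety: $\mu_g$ a priori need not be ergodic, but by Theorem A it is the unique m.m.e., hence ergodic, so the dimension formula applies directly.

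For part (2), the density of the support, I would argue that $\mathrm{supp}(\mu_g)$ must intersect every ball of radius $\frac{8}{k^{1/3}}$. This follows from Duarte's estimates showing that the hyperbolic set $\Lambda_k$ (equivalently, the region where the dynamics is uniformly hyperbolic) is already $O(k^{-1/3})$-dense in $\T^2$: the complement of the hyperbolic region is contained in a neighborhood of the critical curves $\cos(2\pi x) = 0$ of controlled width, and the horseshoe fills the rest up to scale $k^{-1/3}$. Since the unique m.m.e. has full support on the maximal uniformly hyperbolic invariant set (a mixing horseshoe carrying full topological entropy has a fully supported m.m.e.), and since this set is $\frac{8}{k^{1/3}}$-dense by Duarte's geometric control, $\mathrm{supp}(\mu_g)$ inherits the same density. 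For $g$ near $f_k$ the continuation of this set moves by at most $C\|g - f_k\|_{C^1}$, which is absorbed into the constant by shrinking $\mathcal{U}$.

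The main obstacle I expect is making precise the claim that the m.m.e. ``lives on'' the Duarte horseshoe region with the right quantitative bounds — in particular, pinning down that $h_{\mathrm{top}}(g)/\lambda^+(\mu_g) \to 1$ uniformly in the $C^2$-neighborhood. The upper bound on $\lambda^+$ is immediate from $\|Dg\|$; the matching lower bound on $h_{\mathrm{top}}(g)$ requires knowing that the entropy of the horseshoe is genuinely comparable to $\log\|Df_k\|$ and not, say, $\frac12\log k$, which is exactly the content of Duarte's counting estimate for the number of hyperbolic ``crossings'' — and one must check this survives $C^2$-perturbation, using that hyperbolic sets and their topological entropy vary lower-semicontinuously. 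The density statement is more robust but still requires carefully importing Duarte's explicit constant $8$ (or verifying it), which amounts to bookkeeping in his construction rather than new ideas.
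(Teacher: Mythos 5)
Part (1) of your proposal is essentially the paper's argument: Young's dimension formula for ergodic hyperbolic measures on surfaces, combined with the entropy lower bound $h_{\mu_g}(g)>(1-\delta)\log k$ coming from Duarte's horseshoe (Corollary \ref{cor.entropyduarte}) and the exponent upper bounds from \eqref{eq.c1norm}. One correction: you invoke volume preservation to write $\lambda^+=-\lambda^-$, but $\mathcal{U}$ is a neighborhood in $\mathrm{Diff}^2(\T^2)$, not in the area-preserving diffeomorphisms, so that identity is unavailable for a general $g\in\mathcal{U}$. It is also unnecessary: the two terms of the formula are bounded separately by $\lambda^+(\mu_g)\le\log\|Dg\|\le\log 4\pi k$ and $-\lambda^-(\mu_g)\le\log\|Dg^{-1}\|\le\log 4\pi k$, which already yields $\mathrm{dim}_H(\mu_g)>2(1-\delta)\log k/\log 4\pi k>2-\varepsilon$. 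Likewise the ``subtlety'' you flag about pinning down $h_{\mathrm{top}}(g)/\lambda^+\to 1$ uniformly is not needed; only the two one-sided bounds above enter.

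Part (2) has a genuine gap. You justify $\Lambda_k(g)\subset\mathrm{supp}(\mu_g)$ by asserting that ``a mixing horseshoe carrying full topological entropy has a fully supported m.m.e.'' But $\Lambda_k(g)$ does not carry full topological entropy: it carries entropy at least $(1-\delta)\log k$, while $h_{\mathrm{top}}(g)$ can be as large as $\log 4\pi k$, so the m.m.e. of the subsystem $g|_{\Lambda_k(g)}$ need not be a m.m.e. for $g$, and there is no a priori reason why the support of $\mu_g$ should contain the horseshoe. The paper closes exactly this gap with the homoclinic-class machinery: choose an ergodic $\nu$ supported on $\Lambda_k(g)$ with $h_{\nu}(g)>(1-\delta)\log k$; by Ruelle's inequality its exponents exceed $(1-\delta)\log k$, so Theorem \ref{thm.robusthommeasures} shows $\nu$ is homoclinically related to $\mu_g$; by Theorem \ref{thm.criteriabcs} one has $\mathrm{supp}(\mu_g)=\mathrm{HC}(\mathcal{O})$ for a hyperbolic periodic orbit $\mathcal{O}\sim\mu_g$, hence $\nu\sim\mathcal{O}$, and since all hyperbolic periodic orbits of the basic set $\Lambda_k(g)$ are homoclinically related to one another (hence to $\mathcal{O}$), it follows that $\Lambda_k(g)\subset\mathrm{HC}(\mathcal{O})=\mathrm{supp}(\mu_g)$. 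Without some version of this step the density claim does not follow from the density of $\Lambda_k(g)$ alone.
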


Suppose that $(g_n)_{n\in \N}$ is a sequence of $C^{\infty}$-diffeomorphisms of a surface converging in the $C^{\infty}$-topology to $g$. Let $\mu_n$ be a m.m.e. for $g_n$, for each $n\in \N$. Then any accumulation for the $\mathrm{weak}^*$-topology of the sequence $(\mu_n)_{n\in \N}$ is a m.m.e. for $g$ (see for instance Section $5.1$ in \cite{buzzicrovisiersarig}). Let $\mathbb{P}(\T^2)$ be the set of probability measures of $\T^2$ endowed with the $\mathrm{weak}^*$-topology. An immediate consequence of the fact above and our Theorem \ref{thm.maintheoremuniqueness} is that for large $k$, the unique m.m.e. for $f_k$ varies continuously with $k$.

\begin{corollary}
\label{cor.continuitymme}
For each $k$ large enough, let $\mu_{\mathrm{max}}(k)$ be the unique m.m.e. for $f_k$. There exists $k_0\in \R$ such that the map
\[
\begin{array}{ccl}
[k_0, +\infty) & \longrightarrow & \mathbb{P}(\T^2)\\
k & \mapsto & \mu_{\mathrm{max}}(k)
\end{array}
\]
is continuous.
\end{corollary}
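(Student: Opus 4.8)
The plan is to deduce Corollary \ref{cor.continuitymme} directly from Theorem \ref{thm.maintheoremuniqueness} together with the semicontinuity of measures of maximal entropy recalled in the paragraph preceding the statement. The only content to verify is that a map into a metrizable space which has a unique accumulation point at every point, and is known to have the property that every sequential limit of images lands in the (singleton) target set, is automatically continuous; the rest is abstract nonsense about the weak$^*$ topology on $\mathbb{P}(\T^2)$.

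Concretely, I would proceed as follows. Fix $k_0$ so large that the conclusion of Theorem \ref{thm.maintheoremuniqueness} holds for every $k \in [k_0,+\infty)$; thus for each such $k$ the standard map $f_k$ has a \emph{unique} m.m.e., which we call $\mu_{\mathrm{max}}(k)$, and the assignment $k \mapsto \mu_{\mathrm{max}}(k)$ is well defined. Since $f_k$ depends $C^\infty$-continuously (indeed real-analytically) on the parameter $k$, for any sequence $k_n \to k$ in $[k_0,+\infty)$ we have $f_{k_n} \to f_k$ in the $C^\infty$-topology. Now invoke the semicontinuity statement recalled just before the corollary (cf. Section $5.1$ in \cite{buzzicrovisiersarig}): writing $\mu_n := \mu_{\mathrm{max}}(k_n)$, any weak$^*$-accumulation point of $(\mu_n)_{n\in\N}$ is a m.m.e. for $f_k$. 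The space $\mathbb{P}(\T^2)$ is weak$^*$-compact and metrizable, so the sequence $(\mu_n)$ has at least one accumulation point, and every accumulation point is a m.m.e.\ for $f_k$; by the uniqueness part of Theorem \ref{thm.maintheoremuniqueness} the only such measure is $\mu_{\mathrm{max}}(k)$. Hence $(\mu_n)$ has a single accumulation point in a compact metrizable space, which forces $\mu_n \to \mu_{\mathrm{max}}(k)$. Since $k_n \to k$ was an arbitrary convergent sequence and $\mathbb{P}(\T^2)$ is metrizable, this is precisely the continuity of $k \mapsto \mu_{\mathrm{max}}(k)$ at $k$, and since $k$ was arbitrary the map is continuous on $[k_0,+\infty)$.

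There is essentially no obstacle here: the corollary is a formal consequence of uniqueness plus upper semicontinuity, and the proof is a one-paragraph argument. The only point deserving a word of care is the elementary topological lemma that a sequence in a compact metric space with a unique accumulation point converges to it (argue by contradiction: if $\mu_{n_j}$ stays outside a fixed neighborhood of $\mu_{\mathrm{max}}(k)$ along a subsequence, extract a further convergent subsequence whose limit is an accumulation point of $(\mu_n)$ different from $\mu_{\mathrm{max}}(k)$, contradicting uniqueness). One should also note that the semicontinuity input from \cite{buzzicrovisiersarig} is stated for $C^\infty$ surface diffeomorphisms, which is exactly the setting here, so it applies verbatim.
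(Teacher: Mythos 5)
Your proposal is correct and is exactly the argument the paper intends: the corollary is stated as an immediate consequence of the semicontinuity fact recalled just before it (any weak$^*$-accumulation point of m.m.e.'s of $C^\infty$-convergent diffeomorphisms is a m.m.e.\ of the limit) combined with the uniqueness from Theorem \ref{thm.maintheoremuniqueness}, and you have simply written out the standard compactness/metrizability details. No discrepancy with the paper's (essentially one-line) proof.
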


Combining our methods with Gorodetski's main result in \cite{gorodetski} (see Theorem \ref{thm.gorod} in Section \ref{sec.prel}), we obtain the following result.

\begin{maintheorem}
\label{thm.genericparamenters}
There exist $k_0$ and a dense $G_{\delta}$-subset of $[k_0, +\infty)$, $\mathcal{R}$, such that for any $k\in \mathcal{R}$, the Hausdorff dimension of the support of the unique m.m.e. for $f_k$ is $2$.
\end{maintheorem}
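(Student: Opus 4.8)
The plan is to intersect the uniqueness range of Theorem~\ref{thm.maintheoremuniqueness} with the genericity set from Gorodetski's Theorem~\ref{thm.gorod}, and then to show that the hyperbolic sets Gorodetski produces all lie inside the support of the unique m.m.e. Concretely, let $k_1$ be such that for every $k$ in a dense $G_\delta$ subset $\mathcal{R}_0\subseteq[k_1,+\infty)$ the map $f_k$ admits a nested sequence of transitive, locally maximal, uniformly hyperbolic sets $\Lambda_1(k)\subseteq\Lambda_2(k)\subseteq\cdots$ with $\mathrm{dim}_H(\Lambda_n(k))\to 2$. Take $k_0\geq k_1$ large enough that Theorem~\ref{thm.maintheoremuniqueness} applies (together with the description of the homoclinic class of the m.m.e.\ carried out in its proof), and set $\mathcal{R}:=\mathcal{R}_0\cap[k_0,+\infty)$, which is again a dense $G_\delta$ in $[k_0,+\infty)$. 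Fix $k\in\mathcal{R}$ and let $\mu_k$ be the unique m.m.e.\ of $f_k$. Since $\mathrm{supp}(\mu_k)\subseteq\T^2$ we get $\mathrm{dim}_H(\mathrm{supp}(\mu_k))\leq 2$ for free, so only the lower bound needs an argument.

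For the lower bound I would first recall that $f_k$ is real-analytic with positive topological entropy, so by \cite{buzzicrovisiersarig} its ergodic m.m.e.\ $\mu_k$ is supported on the closure $\overline{H(p_k)}$ of a homoclinic class of a hyperbolic periodic point $p_k$. Next, picking a periodic point $q\in\Lambda_1(k)$, the fact that each $\Lambda_n(k)$ is a transitive basic set containing $q$ gives $\Lambda_n(k)\subseteq\overline{H(q)}$ for all $n$ (a transitive locally maximal hyperbolic set is contained in the homoclinic class of any of its periodic points, by local product structure). The heart of the matter is then to show that $q$ and $p_k$ are homoclinically related, so that $\overline{H(q)}=\overline{H(p_k)}=\mathrm{supp}(\mu_k)$, and hence $\Lambda_n(k)\subseteq\mathrm{supp}(\mu_k)$ for every $n$. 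This homoclinic relation is precisely what should come out of the proof of Theorem~\ref{thm.maintheoremuniqueness}: for large parameters the m.m.e.\ of the standard map is carried by the single ``large'' homoclinic class built from the uniformly hyperbolic dynamics in the hyperbolic region of $f_k$, which is the very dynamics underlying Duarte's and Gorodetski's constructions, so $p_k$ and $q$ lie in that common class. Equivalently, the periodic orbits of $\Lambda_n(k)$ have Lyapunov exponents uniformly bounded away from $0$, hence are $\rho$-hyperbolic for a fixed $\rho>0$, and therefore belong to the homoclinic class supporting $\mu_k$, as follows from the analysis behind Theorems~\ref{thm.maintheoremperiodicpoints} and \ref{thm.maintheoremdensity}.

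Granting $\Lambda_n(k)\subseteq\mathrm{supp}(\mu_k)$ for all $n$, monotonicity of Hausdorff dimension gives $\mathrm{dim}_H(\mathrm{supp}(\mu_k))\geq\mathrm{dim}_H(\Lambda_n(k))$ for every $n$; letting $n\to\infty$ and invoking Gorodetski's estimate $\mathrm{dim}_H(\Lambda_n(k))\to 2$ yields $\mathrm{dim}_H(\mathrm{supp}(\mu_k))\geq 2$, hence $\mathrm{dim}_H(\mathrm{supp}(\mu_k))=2$ for every $k\in\mathcal{R}$. I expect the one genuinely nontrivial step to be placing the hyperbolic sets inside $\mathrm{supp}(\mu_k)$: this is not a formal manipulation but rests on matching ``the'' homoclinic class detected by the maximal-entropy analysis with the one studied by Duarte and Gorodetski, so it has to be extracted from the detailed picture of the hyperbolic region of the standard map used in the proofs of Theorems~\ref{thm.maintheoremuniqueness}--\ref{thm.maintheoremdensity}. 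The remaining ingredients --- building $\mathcal{R}$, the dimension bookkeeping, and the homoclinic-class structure of ergodic m.m.e.'s from \cite{buzzicrovisiersarig} --- are routine.
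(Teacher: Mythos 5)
Your skeleton coincides with the paper's: restrict to Gorodetski's generic parameter set, show that the nested basic sets $\Lambda_k^{(n)}$ all lie inside $\mathrm{supp}(\mu_k)$, and let their Hausdorff dimensions tend to $2$. The problem is the one step you yourself flag as nontrivial, where the mechanism you propose would fail. You assert that the periodic orbits of $\Lambda_k^{(n)}$ are $\rho$-hyperbolic for some fixed $\rho>0$ and ``therefore belong to the homoclinic class supporting $\mu_k$''. That implication is not available: the homoclinic-relation criterion driving this paper (Theorem \ref{thm.hommeasures}) applies only to ergodic measures whose Lyapunov exponents exceed $(1-\delta)\log k$ --- a strong, parameter-dependent threshold --- not to orbits whose exponents are merely bounded away from $0$. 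For a generic large $k$ the standard map has elliptic islands, and hyperbolic periodic orbits of small exponent near them need not lie in $\mathrm{HC}(\mathcal{O})$; neither Theorem \ref{thm.maintheoremperiodicpoints} (an asymptotic counting/equidistribution statement, dominated by the high-entropy orbits) nor Theorem \ref{thm.maintheoremdensity} says that every $\rho$-hyperbolic periodic point belongs to the class.

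The correct bridge --- what ``arguing as in the proof of Theorem \ref{thm.maintheoremdensity}'' means --- is entropy-based rather than exponent-based. One exhibits inside the Gorodetski chain a basic set of topological entropy greater than $(1-\delta)\log k$ (Duarte's horseshoe from Corollary \ref{cor.entropyduarte}, which underlies Gorodetski's construction), takes an ergodic measure $\nu$ on it with $h_{\nu}(f_k)>(1-\delta)\log k$, and uses Ruelle's inequality to convert this entropy bound into the bound $\min\{\lambda^+(\nu),-\lambda^-(\nu)\}>(1-\delta)\log k$. Theorem \ref{thm.hommeasures} then gives $\nu\sim\mu_k$, hence $\nu\sim\mathcal{O}$ where $\mathrm{supp}(\mu_k)=\mathrm{HC}(\mathcal{O})$ by Theorem \ref{thm.criteriabcs}; transitivity and local maximality of the basic set propagate the relation to all of its periodic orbits, and the nesting $\Lambda_k^{(0)}\subset\Lambda_k^{(n)}$ propagates it along the whole chain, giving $\Omega_k\subset\mathrm{HC}(\mathcal{O})=\mathrm{supp}(\mu_k)$. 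With that step supplied, your construction of $\mathcal{R}$ and the dimension bookkeeping are fine.
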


From our proof and the results from \cite{buzzicrovisiersarig}, we actually obtain that for a large enough parameter, the standard map has a unique transitive invariant compact set that ``contains'' every measure with high enough entropy (see Remark \ref{remark.homomeasures} below).

\subsection*{Comments and strategy of the proof of Theorem \ref{thm.maintheoremuniqueness}}

In \cite{bergercarrasco}, Berger and Carrasco introduced a volume preserving partially hyperbolic skew product on $\T^4$ which is derived from the standard map on the fibers. The skew product gives an additional ``transversality'' which allows the authors to prove the non-uniform hyperbolicity of it. Indeed, they proved that this example is $C^2$-robustly non-uniform hyperbolic (see Section \ref{sec.prel} for the definition of non-uniform hyperbolicity).  

In \cite{obataergodicity}, the author obtained that the Berger-Carrasco example is $C^2$-stably Bernoulli. The technical part of the proof passes through having a precise control of Pesin's stable and unstable manifolds on the fibers (which as we mentioned before is closely related to the standard map). In a certain way, this work goes in the direction of {\em what does the understanding of Berger-Carrasco's example can give us about the standard map?}

Let us explain the strategy of the proof of Theorem \ref{thm.maintheoremuniqueness}. In \cite{buzzicrovisiersarig}, the authors obtain a criterion for uniqueness of the m.m.e. They proved that in a measured homoclinic class (see Definition \ref{def.measuredhomo}) there exists at most one m.m.e. (see Theorem \ref{thm.criteriabcs}). This criterion is based on the powerful tool developed by Sarig in \cite{sarig}, where he obtains a semi-conjugacy of the system with a Markov shift that captures every ``sufficiently hyperbolic'' invariant measure. The Markov shift obtained in \cite{sarig} is not necessarily irreducible. In \cite{buzzicrovisiersarig} they obtain that in a measured homoclinic class one can obtain a semi-conjugacy with a irreducible Markov shift, and this will imply their criterion (see Section 3 in \cite{buzzicrovisiersarig}). With this criterion we reduce the problem of uniqueness of the m.m.e. for the standard map to a problem of finding transverse intersections between stable and unstable manifolds of hyperbolic measures.

We then prove that for sufficiently large parameters any two ergodic measures with ``large'' Lyapunov exponents are homoclinically related (see Theorem \ref{thm.hommeasures}). Using some estimates implied by the result from \cite{duarte} (see Theorem \ref{thm.duartebasicset}), we obtain that for large enough parameters the standard map has high enough topological entropy, so that any two ergodic measures with ``high entropy'' will verify the conditions of Theorem \ref{thm.hommeasures}. In particular, any two measure with ``high entropy'' will be homoclinically related and this will imply the uniqueness of the m.m.e.

The proof of  Theorem \ref{thm.hommeasures} is based on the precise estimates obtained by the author in \cite{obataergodicity} mentioned above. In order to obtain transverse intersections between stable and unstable manifolds, one needs to control the length and ``geometry'' of such manifolds. There is a local and a global strategy in the argument.

For the local strategy, we use the construction of stable manifolds given by Crovisier and Pujals in \cite{ch1crovisierpujalsstronglydissipative}, where in a certain way ``quantifies'' the idea that ``large'' Lyapunov exponents implies ``large'' stable and unstable manifolds (see Proposition \ref{prop.estimateprop1}). This construction together with Pliss lemma (see Lemma \ref{pliss}) will give us some lower bound on the size  and some control of the ``geometry'' of local stable and unstable Pesin's manifolds in a ``large'' set of points for a measure with ``large'' exponents. Only with these estimates one could also conclude the finiteness of m.m.e. (this finiteness is also a consequence of the main theorem in \cite{buzzicrovisiersarig}).

The global strategy goes as follows. Pliss lemma will also give that for a measure with ``large'' exponents, the points obtained before (with precise estimates on the length and ``geometry'' of the stable and unstable manifolds) spend a ``long'' time in the hyperbolic region for the standard map. This will allow us to prove that these stable and unstable manifolds are large ``vertical'' and ``horizontal'' curves in the torus, respectively (see Lemma \ref{bigmnfld}). Then we can find transverse intersections between stable and unstable manifolds for any two ergodic measures with ``large'' exponents. The estimates obtained are $C^2$-robust, and this will imply Theorem \ref{thm.maintheoremuniqueness}.  

\subsection*{Questions and remarks}

As we mentioned before, Newhouse proved that a $C^{\infty}$-diffeomorphism has a m.m.e., see \cite{newhouse}. However, Buzzi showed that for each $r\in [1, +\infty)$ and any surface, there is a $C^r$-diffeomorphism with no m.m.e. (see \cite{buzzi14}).

We remark that our Theorem \ref{thm.maintheoremuniqueness} only addresses the uniqueness of the m.m.e. in a $C^2$-neighborhood of $f_k$, for large $k$. We do not know about the existence of a m.m.e. for any $C^2$-diffeomorphism close to $f_k$. For a diffeomorphism $f$ define
\[
\displaystyle \lambda_{\mathrm{min}}(f) := \min \{ \limsup_{n\to + \infty} \frac{1}{n} \log \|Df^n\|, \limsup_{n\to +\infty} \frac{1}{n} \log \|Df^{-n}\|\},
\]
where $\|Df^n\| := \max_{p\in S} \|Df^n(p)\|$. In \cite{buzzicrovisiersarig}, the authors make the following conjecture.

\begin{conjecture}[Conjecture $2$ in \cite{buzzicrovisiersarig}]
For any $r\in (1, + \infty)$, any $C^r$ diffeomorphism of a surface $f$ with topological entropy larger than $\frac{\lambda_{\mathrm{min}}(f)}{r}$ has a m.m.e.
\end{conjecture}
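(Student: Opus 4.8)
The statement is the Buzzi--Crovisier--Sarig conjecture and is open; what follows is therefore a strategy rather than a proof. The plan is to reduce the existence problem to a quantitative bound on the \emph{entropy at infinity} of $f$ and then to establish that bound via $C^r$ Yomdin--Gromov reparametrization theory, exploiting the two-dimensionality of $S$.

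First I would invoke Sarig's symbolic dynamics \cite{sarig}, in the form refined by \cite{buzzicrovisiersarig}: for every $\chi>0$ there is a countable-state Markov shift $(\widehat\Sigma_\chi,\sigma)$ and a finite-to-one H\"older coding $\pi_\chi:\widehat\Sigma_\chi\to S$ carrying every ergodic $\chi$-hyperbolic measure of $f$. Writing $h_\infty(f)$ for the associated entropy at infinity, the criterion of \cite{buzzicrovisiersarig} gives that $h_{\mathrm{top}}(f)>h_\infty(f)$ forces the dominant component of the relevant Markov shift to be positive recurrent, which produces an ergodic m.m.e. (and only finitely many of them). So it suffices to prove
\[
h_\infty(f)\ \le\ \frac{\lambda_{\mathrm{min}}(f)}{r}.
\]
This is consistent with the two extreme cases: for $r=\infty$ the right-hand side is $0$ and one recovers Newhouse's existence result \cite{newhouse}, while Buzzi's examples \cite{buzzi14} show that some bound of this type is unavoidable for finite $r$.

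To prove the displayed inequality I would argue that mass escaping to infinity in $\widehat\Sigma_\chi$ corresponds, on $S$, to entropy carried by orbit segments along which the hyperbolicity degenerates, i.e.\ along which the dominant expansion rate drops below $\chi$; letting $\chi\to0$, the escaping entropy is then bounded by the topological entropy of the \emph{near-neutral} part of the dynamics. On a surface this part is governed by the growth of \emph{lengths} of curves rather than of areas, so one should be able to bound its entropy by the one-dimensional $C^r$ volume-growth estimate: the Yomdin--Gromov semialgebraic reparametrization lemma covers the $n$-th iterate of a $C^r$ curve by $\exp\bigl(o(n)+\tfrac{n}{r}\,\limsup_m\tfrac1m\log^+\|Df^m\|\bigr)$ pieces of controlled geometry, and the forward/backward symmetry of the escape phenomenon (stable versus unstable degeneration) lets one replace $\limsup_m\tfrac1m\log^+\|Df^m\|$ by its minimum with the backward quantity, yielding exactly $\lambda_{\mathrm{min}}(f)/r$.

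The main obstacle is this last step. The known tail-entropy bounds in the $C^r$ category (Newhouse, Buzzi) carry a spurious factor $\dim S = 2$ and use only the forward quantity $\limsup_m\tfrac1m\log^+\|Df^m\|$, so one must genuinely use the surface structure --- essentially a dominated (or merely non-uniform) splitting off the neutral direction on the relevant Pesin blocks --- both to reduce the reparametrization to one dimension and to symmetrize. Equally delicate is the dictionary between the geometric entropy at infinity (degeneration of Pesin blocks) and its symbolic counterpart in Sarig's model: one needs the Yomdin estimate to control $h_\infty(f)$ itself, uniformly in $\chi$, and not merely some auxiliary tail entropy. Making this translation precise is where the argument is most likely to require a new idea, perhaps in the spirit of the localized entropy estimates of Burguet \cite{burguet}.
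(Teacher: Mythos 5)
This statement is quoted in the paper verbatim as Conjecture~$2$ of \cite{buzzicrovisiersarig}; the paper offers no proof of it, and it is, to the best of anyone's knowledge, open. You recognize this and present a strategy rather than a proof, so the honest verdict is that there is a genuine gap: the entire argument is conditional on an unproved quantitative estimate, and no amount of assembling known ingredients closes it.

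Concretely, your reduction of existence to the inequality $h_\infty(f)\le \lambda_{\mathrm{min}}(f)/r$ is the standard and reasonable framing (it is consistent with Newhouse's theorem at $r=\infty$ and with Buzzi's non-existence examples \cite{buzzi14} for finite $r$), but the two steps you yourself flag as ``obstacles'' are exactly the missing mathematics, not technicalities. The known $C^r$ tail-entropy bounds control escaping entropy by a quantity of the form $\frac{\dim S}{r}\,\lim_m\frac1m\log^+\|Df^m\|$; to reach the conjectured threshold you must both remove the factor $\dim S=2$ (which requires reducing the Yomdin--Gromov reparametrization to genuinely one-dimensional objects along the would-be unstable direction, something not available without an a priori splitting on the escaping orbit segments) and replace the forward growth rate by its minimum with the backward one (a symmetrization for which no mechanism is known: tail entropy is not manifestly invariant under $f\mapsto f^{-1}$ in a way that lets you take the better of the two bounds). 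Moreover, the identification of the symbolic entropy at infinity of Sarig's coding with a geometric tail quantity amenable to Yomdin-type estimates, uniformly as the hyperbolicity parameter $\chi\to0$, is itself unestablished. Your text is a useful map of where the difficulty lies, but each of these three points would require a new idea, so the proposal cannot be accepted as a proof of the statement.
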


We remark that $\lambda_{\mathrm{min}}(f) \leq \max \{\log \|Df\|, \log \|Df^{-1}\|\}$. As an easy consequence of Corollary \ref{cor.entropyduarte} below, we can obtain that for $k$ large enough, if $g$ is a $C^2$-diffeomorphism sufficiently $C^1$-close to $f_k$ then $g$ has topological entropy larger than $\frac{\lambda_{\mathrm{min}}(g)}{2}$. Hence, we expect the following conjecture to be true.

\begin{conjecturen}
For $k$ large enough, if $g$ is a $C^2$-diffeomorphism sufficiently $C^2$-close to $f_k$ then there exists a m.m.e. for $g$, which is unique.
\end{conjecturen}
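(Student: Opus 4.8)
The uniqueness half of the statement is already contained in Theorem \ref{thm.maintheoremuniqueness}: for $k$ large and $g$ sufficiently $C^2$-close to $f_k$, any m.m.e. of $g$ is Bernoulli and there is at most one of them. So the plan is to establish \emph{existence}, and then the conjecture follows. I would run the classical variational scheme: pick ergodic measures $\mu_n$ with $h_{\mu_n}(g)\to h_{\mathrm{top}}(g)$ and pass to a $\mathrm{weak}^*$-limit, the only difficulty being the possible drop of entropy in the limit (the entropy functional is not upper semicontinuous in the $C^2$ category; in $C^\infty$ it is, by Yomdin, which is why Newhouse's argument gives existence unconditionally there). The first step is to observe that the maximizing sequence is \emph{uniformly hyperbolic}: by Ruelle's inequality $\lambda^+(\mu_n)\geq h_{\mu_n}(g)$, and by Corollary \ref{cor.entropyduarte} one has $h_{\mathrm{top}}(g)>\tfrac12\lambda_{\mathrm{min}}(g)>0$, so $\lambda^+(\mu_n)$ is eventually bounded below by a fixed $\rho>0$; since $g$ is $C^1$-close to the area-preserving $f_k$, the quantity $\int\log|\det Dg|\,d\mu_n$ is uniformly small, hence $\lambda^-(\mu_n)=\int\log|\det Dg|\,d\mu_n-\lambda^+(\mu_n)<-\rho$ as well. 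The same computation shows that no invariant measure with $\lambda^+<\rho$ can have entropy $\geq\rho$, so all of the topological entropy is carried by $\rho$-hyperbolic measures.

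Next I would feed these measures into the Buzzi--Crovisier--Sarig machinery. By Theorem \ref{thm.hommeasures}, for $k$ large and $g$ in the neighborhood any two ergodic measures with Lyapunov exponents of size at least $\rho$ are homoclinically related; in particular the $\mu_n$ (for large $n$) all belong to a single measured homoclinic class $H$. By Sarig's symbolic dynamics \cite{sarig}, the $\rho$-hyperbolic part of $g$ is coded by a countable Markov shift, and on $H$ the coding passes through a single \emph{irreducible} component $\Sigma$ (this is the point of Section 3 of \cite{buzzicrovisiersarig}). The measures $\mu_n$ lift to $\sigma$-invariant measures on $\Sigma$ of the same entropy, so $h_{\mathrm{Gurevich}}(\Sigma)\geq h_{\mathrm{top}}(g)$ (equality holds, since the non-$\rho$-hyperbolic part carries no entropy above $\rho$, but $\geq$ is all that is needed). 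By the Gurevich--Sarig theory, a transitive countable Markov shift of finite Gurevich entropy has a necessarily unique m.m.e. as soon as it is positive recurrent, and this measure projects to an m.m.e. of $g$ supported on $H$, which by the first paragraph is an m.m.e. of $g$. So the whole problem is reduced to showing that $\Sigma$ is positive recurrent, for which it suffices that it be \emph{strongly positive recurrent}, i.e. that $h_{\mathrm{top}}(\Sigma)>h_\infty(\Sigma)$, where $h_\infty(\Sigma)$ is its entropy at infinity.

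The remaining, and decisive, step is the estimate
\[
h_\infty(g)\ \leq\ \tfrac12\,\lambda_{\mathrm{min}}(g),
\]
where $h_\infty(g)$ is the entropy at infinity of the diffeomorphism (the supremum of entropies carried by sequences of measures whose hyperbolicity degenerates), a quantity that dominates $h_\infty(\Sigma)$. Combined with $h_{\mathrm{top}}(g)>\tfrac12\lambda_{\mathrm{min}}(g)$ from Corollary \ref{cor.entropyduarte}, this yields $h_{\mathrm{top}}(g)>h_\infty(g)\geq h_\infty(\Sigma)$, hence $\Sigma$ is strongly positive recurrent and the m.m.e. exists; Theorem \ref{thm.maintheoremuniqueness} then gives uniqueness (and the Bernoulli property).

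The hard part is precisely this last inequality: it is a sharp, invertible, surface version of a Yomdin--Newhouse bound, and it is exactly (a case of) Conjecture 2 of \cite{buzzicrovisiersarig}, which is why the statement is only a conjecture here. The straightforward Yomdin estimate in the $C^2$ category only bounds the tail entropy by a constant times $\limsup_n\tfrac1n\log\|Dg^n\|$, which may exceed $h_{\mathrm{top}}(g)$ and is therefore useless for the argument; replacing $\limsup_n\tfrac1n\log\|Dg^n\|$ by $\lambda_{\mathrm{min}}(g)$ requires exploiting that entropy can only escape along one of the two Lyapunov directions at a time, so that the relevant exponential rate is the smaller of the forward and backward maximal expansions. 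An alternative route specific to the standard map would be to localize the escape of entropy: away from a $k^{-1/3}$-neighborhood of the critical lines $x\in\{1/4,3/4\}$ the map is uniformly hyperbolic (this is behind Theorem \ref{thm.duartebasicset}), so any escaping sequence of measures must concentrate near this one-dimensional critical locus, and one could try to bound the tail entropy there directly from the precise geometry of the stable and unstable curves in \cite{obataergodicity}, using that the $C^2$ size of $g$ is uniformly controlled on the neighborhood $\mathcal{U}$. In either approach this surface entropy-at-infinity bound is the only missing ingredient.
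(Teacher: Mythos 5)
The statement you are asked to prove is not a theorem of the paper: it is stated there as a \emph{conjecture}, precisely because the existence half is open. The paper explicitly says that it does not know whether a m.m.e. exists for an arbitrary $C^2$-diffeomorphism near $f_k$, and it motivates the conjecture exactly as you do, by observing that Corollary \ref{cor.entropyduarte} gives $h_{\mathrm{top}}(g)>\tfrac12\lambda_{\mathrm{min}}(g)$, so that $g$ falls under the hypotheses of Conjecture~2 of \cite{buzzicrovisiersarig}. Your proposal therefore cannot be compared with a proof in the paper, because there is none.

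As a piece of mathematics, your write-up is an honest and essentially correct \emph{reduction}, not a proof. The uniqueness half is indeed Theorem \ref{thm.maintheoremuniqueness}; the preliminary observations (Ruelle's inequality forcing any high-entropy ergodic measure to be $\rho$-hyperbolic with $\lambda^-$ controlled by the near-vanishing of $\int\log|\det Dg|\,d\mu$, Theorem \ref{thm.hommeasures} placing all such measures in one measured homoclinic class, hence in one irreducible component of Sarig's coding) are sound and consistent with the paper's framework. But the decisive step --- the entropy-at-infinity bound $h_\infty(g)\leq\tfrac12\lambda_{\mathrm{min}}(g)$, or any other mechanism forcing positive recurrence of the irreducible component --- is exactly the content of the open Conjecture~2 of \cite{buzzicrovisiersarig}, and you say so yourself. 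Since that inequality is neither proved by you nor available in the literature cited by the paper, the existence half remains unproved; your argument establishes only that the conjecture of the paper follows from the conjecture of Buzzi--Crovisier--Sarig (restricted to this neighborhood), which is in effect what the paper already asserts when it writes ``hence, we expect the following conjecture to be true.'' Your suggested alternative route --- localizing the escape of entropy to the $k^{-1/3}$-neighborhood of the critical lines and estimating the tail entropy there using the uniform hyperbolicity of the complement --- is a genuinely interesting idea and arguably the most promising way to prove the conjecture for this specific family, but as written it is only a program, with no quantitative tail-entropy estimate carried out.
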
 

For a diffeomorphism $f$, an invariant measure $\mu$ is \textbf{exponentially mixing} if there exists $\beta \in (0,1)$, such that for any two H\"older continuous functions $\varphi, \psi$ with zero $\mu$-average\footnote{That is $\int \varphi d\mu = \int \psi d\mu =0$.}, verifies
\[
\displaystyle \left| \int \varphi. \psi \circ f^n d\mu \right| \leq C(\varphi, \psi) \beta^n,
\]
where $C(\varphi, \psi)$ is a constant depending on the functions $\varphi, \psi$.

We remark that some of the features that makes the analysis of the standard map so difficult is that it has expansion and contraction happening in a big part of the manifold, but in some critical regions it may switch expanding and contracting directions. Another important example of surface diffeomorphism with similar properties, but in the dissipative setting, is given by the H\'enon family. Recently, Berger proved in \cite{bergerhenon} that every strongly regular\footnote{See  \cite{bergeryoccoz} for other results on strong regularity.} H\'enon map has a unique m.m.e. He also obtains several properties of this measure, including exponential mixing. A natural question is then the following.

\begin{q}
For $k$ large enough, is the unique m.m.e. for $f_k$ exponentially mixing? Is this property $C^2$-robust?
\end{q}

Our Corollary \ref{cor.continuitymme} states that there exists $k_0$ such that the map $k\mapsto \mu_{\mathrm{max}}(k)$ is continuous, for $k\in [k_0, +\infty)$. This gives a continuous curve in $\mathbb{P}(\T^2)$. 

\begin{q}
What can one say about the regularity of the curve $k\mapsto \mu_{\mathrm{max}}(k)$?
\end{q}

For the measure of maximal entropy $\mu_{\mathrm{max}}(k)$ let $\lambda^+(k)$ be the associated positive Lyapunov exponent. 

\begin{q}
Does there exists $k_0\in \R$ such that the function $k\mapsto \lambda^+(k)$ is continuous for $k\in [k_0,+\infty)$? If so, what is the regularity of this function?
\end{q}


By the result of Newhouse in \cite{newhouse}, in dimension $2$, the topological entropy depends continuously with the diffeomorphism for the $C^{\infty}$-topology. Hence, the map $k\mapsto h_{\mathrm{top}}(f_k)$ is continuous for $k\in \R$. From the results in \cite{duarte}, we also known that the topological entropy of $f_k$ goes to infinity as $k$ goes to infinity.

\begin{q}
\label{q.increasing}
Does there exist $k_0\in \R$ such that the function $k\mapsto h_{\mathrm{top}}(f_k)$ is strictly increasing for $k\in [k_0, +\infty)$? What is the regularity of this function?
\end{q}

If there was a large parameter $k\in \R$ such that the m.m.e. $\mu$ for $f_k$ verified $\mathrm{dim}_H(\mu) = 2$, then the measure $\mu$ would be an SRB measure, which in the volume preserving case it implies that $\mu$ is absolutely continuous with respect to the Lebesgue measure. This follows from a combination of the dimension formula given in \cite{young82} and Ledrappier-Young's result given in \cite{ledrappieryoung1, ledrappieryoung2}, see also these references for the definition of SRB measure. If this was the case for some large parameter, it would imply positive metric entropy as well. In general, it is not expected that the m.m.e. coincides with an SRB measure. In a certain way our Theorem \ref{thm.maintheoremdensity} states that the m.m.e. for the standard map is ``close'' of being absolutely continuous, meaning the dimension of the measure can be arbitrarily close to $2$.

\begin{q}
Is it true that for any $k$ large enough the m.m.e. $\mu$ for $f_k$ verifies $\mathrm{dim}_H(\mu) < 2$?
\end{q}

As we explained, the main techniques used in this paper are to obtain good estimates of the length and ``geometry'' of stable and unstable Pesin's manifolds for measures with ``large'' exponents. Using these types of estimates, we are able to obtain transverse intersections between those manifolds for any two measures with ``large'' exponents. 

If Sinai's Conjecture \ref{conjecture.sinai} is true (existence of parameters for which the standard map has positive metric entropy for the Lebesgue measure), we believe that our techniques could be used to obtain an upper bound  of the number of ergodic components of the Lebesgue measure restricted to the non-uniformly hyperbolic part.

\subsection*{Organization of the paper}

In Section \ref{sec.prel} we recall the main results from \cite{buzzicrovisiersarig, duarte, gorodetski} that we will use. Sections \ref{intmnfld} and \ref{sec.homrelation} are dedicated to obtain the estimates needed to prove that any two measures with ``large'' exponents are homoclinically related (see Theorem \ref{thm.hommeasures}). Using this we prove Theorem \ref{thm.maintheoremuniqueness} in Section \ref{sec.uniqueness}. Theorem \ref{thm.maintheoremperiodicpoints} is then proved in Section \ref{sec.thmb}, and Theorems \ref{thm.maintheoremdensity} and \ref{thm.genericparamenters} are proved in Section \ref{sec.thmcd}.

\subsection*{Acknowledgments}

This problem was suggested by Pierre Berger on the day of the author's Ph.D. defense. The author would like to thank him for the suggestion. The author also thanks J\'er\^ ome Buzzi, Sylvain Crovisier,  Pedro Duarte, Todd Fisher, Yuri Lima and Mauricio Poletti for many useful comments and suggestions.

\section{Preliminaries}
\label{sec.prel}

\subsection{Homoclinic classes of hyperbolic measures and m.m.e.} \label{subsec.pesintheory}

In this section we recall some of the results of Buzzi, Crovisier and Sarig in \cite{buzzicrovisiersarig}. Their result will be one of the main ingredients in our strategy in proving the uniqueness of m.m.e. for the standard map. Before, let us review some facts about hyperbolic dynamics.

\subsubsection*{Homoclinic classes and horseshoes}

Let $f$ be a $C^1$-diffeomorphism of a compact manifold $M$. A compact invariant set $\Lambda$ is \textbf{hyperbolic} if there is a $Df$-invariant splitting of the tangent space into two directions $T_{\Lambda}M = E^s \oplus E^u$ with the property that there is $N\in \N$ that verifies for any $p\in \Lambda$, 
\[
\|Df^N(p)|_{E^s}\| < \frac{1}{2} \textrm{ and } \|Df^{-N}(p)|_{E^u}\| <\frac{1}{2}.
\]

It is well known that if $\Lambda$ is a hyperbolic set, then for any $q\in \Lambda$, the following sets are $C^1$-immersed submanifolds
\[
\begin{array}{rcl}
W^s(q) & = & \{ x\in M: d(f^n(x), f^n(q)) \to 0, \textrm{ as $n\to +\infty$}\},\\
W^u(q)& = & \{ x\in M: d(f^{-n}(x), f^{-n}(q)) \to 0, \textrm{ as $n\to +\infty$} \}.
\end{array} 
\]
The sets $W^s(q) $ and $W^u(q)$ are called the \textbf{stable} and \textbf{unstable} manifolds of $q$.

A periodic orbit is \textbf{hyperbolic} if it is a hyperbolic set. The set of hyperbolic periodic orbits of $f$ is denoted by $\mathrm{Per}_h(f)$. Given two orbits $\mathcal{O}_1, \mathcal{O}_2 \in \mathrm{Per}_h(f)$ we say that they are \textbf{homoclinically related} if
\[
W^s(\mathcal{O}_1) \pitchfork W^u(\mathcal{O}_2) \neq \emptyset \textrm{ and } W^s(\mathcal{O}_2) \pitchfork W^u(\mathcal{O}_1) \neq \emptyset.
\]
If two orbits, $\mathcal{O}_1$ and $\mathcal{O}_2$, are homoclinically related, we write $\mathcal{O}_1 \sim \mathcal{O}_2$. The \textbf{homoclinic class} of a hyperbolic periodic orbit $\mathcal{O}$ is defined by
\[
\mathrm{HC}(\mathcal{O}) := \overline{ \{\mathcal{O}' \in \mathrm{Per}_h(f): \mathcal{O}' \sim \mathcal{O} \}}.
\]

An $f$-invariant set $\Lambda$ is \textbf{locally maximal} if there exists a neighborhood $U$ of $\Lambda$ such that $\Lambda = \bigcap_{n\in \Z} f^n(U)$. The set $\Lambda$ is \textbf{transitive}, if it contains a dense orbit. A \textbf{basic set} is a transitive, locally maximal and hyperbolic set. A basic set that is totally disconnected is called a \textbf{horseshoe}.

\subsubsection*{Homoclinic classes of hyperbolic measures}

For a diffeomorphism $f$, we say that a set $R$ has full probability if for any $f$-invariant probability measure $\nu$ it is verified that $\nu(R)=1$.  In what follows we state Oseledets Theorem for surface diffeomorphism.

\begin{theorem}[\cite{ch1barreirapesinbook}, Theorems $2.1.1$ and $2.1.2$]
\label{oseledets}
For any $C^1$-diffeomorphism $f:S\to S$ of a compact surface $S$, there is a set $\mathcal{R}$ of full probability, such that the following properties holds:
\begin{enumerate}
\item for any $p\in \mathcal{R}$ there are numbers $s(p) = 1$ or $2$, $\lambda_1(p) <  \lambda_{s(p)}(p)$ and a decomposition $T_pM = E^{1}_p \oplus E^{s(p)}_p$ that verifies
\[
\displaystyle \lim_{n\to +\infty}\frac{1}{n} \log \|Df^n(p)|_{E^i_p}\| = \lambda_i(p), \textrm{ for $i=1$ or $s(p)$;}
\]
\item $s(f(p)) = s(p)$, $\lambda_i(f(p)) = \lambda_i(p)$ and $Df(p).E^{i}_p= E^{i}_{f(p)}$, for every $i= 1, \cdots, s(p)$.
\end{enumerate} 
\end{theorem}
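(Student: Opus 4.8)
This is the Oseledets multiplicative ergodic theorem specialized to a compact surface, so the plan is to follow the classical proof (as in \cite{ch1barreirapesinbook}); I sketch it below. \emph{Setup and existence of the exponents.} First I would reduce to the ergodic case: by the ergodic decomposition it suffices to produce, for each ergodic $\mu$, a Borel set of full $\mu$-measure on which the conclusions hold, since defining $\mathcal{R}$ intrinsically as the set of points at which all the limits below exist and the construction succeeds makes $\mathcal{R}$ a Borel, $f$-invariant set with $\mu(\mathcal{R})=1$ for every ergodic $\mu$, hence (integrating over ergodic components) for every invariant measure. Since $S$ is compact and $f$ is $C^1$, $\log\|Df\|$ and $\log\|Df^{-1}\|$ are bounded, so they lie in $L^1(\mu)$ for every invariant $\mu$; this supplies the integrability needed for the ergodic theorems. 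The sequence $a_n(p):=\log\|Df^n(p)\|$ is subadditive over $f$ by the chain rule and submultiplicativity of the operator norm, so Kingman's subadditive ergodic theorem gives $\tfrac1n a_n(p)\to\lambda^{+}(p)$ for $\mu$-a.e.\ $p$, with $\lambda^{+}$ $f$-invariant. The sequence $b_n(p):=\log|\det Df^n(p)|=\sum_{j=0}^{n-1}\log|\det Df(f^j p)|$ is an additive cocycle, so Birkhoff's theorem gives $\tfrac1n b_n(p)\to\Sigma(p)$ a.e. In dimension two, $|\det Df^n|=\sigma_1^{(n)}\sigma_2^{(n)}$ with $\sigma_1^{(n)}=\|Df^n\|$ and $\sigma_2^{(n)}=\|(Df^n)^{-1}\|^{-1}$, whence $\tfrac1n\log\sigma_2^{(n)}(p)=\tfrac1n b_n(p)-\tfrac1n a_n(p)\to\Sigma(p)-\lambda^{+}(p)=:\lambda^{-}(p)\le\lambda^{+}(p)$.

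\emph{The splitting and equivariance.} If $\lambda^{-}(p)=\lambda^{+}(p)$ I set $s(p)=1$, $\lambda_1(p)=\lambda^{+}(p)$, $E^1_p=T_pS$; the estimate $\sigma_2^{(n)}\|v\|\le\|Df^n(p)v\|\le\sigma_1^{(n)}\|v\|$ forces the required limit for every nonzero $v$. If $\lambda^{-}(p)<\lambda^{+}(p)$ I set $s(p)=2$, $\lambda_2(p)=\lambda^{+}(p)>\lambda_1(p)=\lambda^{-}(p)$, and I must construct the two lines. For $E^1_p$, let $e_n(p)$ be a unit vector realizing the smallest singular value of $Df^n(p)$ (the ``most contracted'' direction). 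Using that, outside a set of small measure and for $n$ large the spectral gap satisfies $\sigma_1^{(n)}/\sigma_2^{(n)}\ge e^{(\lambda^{+}-\lambda^{-}-\varepsilon)n}$ (Egorov), I would show $\angle(e_n(p),e_{n+1}(p))\le C(p)\, e^{-cn}$ for a.e.\ $p$, so that $e_n(p)\to e_\infty(p)$; then $E^1_p:=\R\,e_\infty(p)$, and one checks $\tfrac1n\log\|Df^n(p)e_\infty(p)\|\to\lambda^{-}(p)$. The same construction applied to $f^{-1}$ produces a line $E^2_p$ along which $Df^{-n}(p)$ contracts at rate $-\lambda^{+}(p)$, i.e.\ along which $Df^n(p)$ grows at rate $\lambda^{+}(p)$; the gap forces $E^1_p\ne E^2_p$, hence $T_pS=E^1_p\oplus E^2_p$. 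Since $s$, the $\lambda_i$ and the $E^i$ are all defined by canonical limiting procedures from the forward and backward orbit of $p$, and replacing $p$ by $f(p)$ merely shifts these sequences by one iterate — which affects neither the exponential rates nor the limiting directions beyond applying $Df(p)$ — one obtains $s(f(p))=s(p)$, $\lambda_i(f(p))=\lambda_i(p)$, and $Df(p)E^i_p=E^i_{f(p)}$.

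\emph{Main obstacle.} The substantial point is the construction of the slow line when $s(p)=2$: one has to upgrade the pointwise existence of the exponents into quantitative, uniform-on-a-large-set control of the spectral gap (and of the partial growth rates), which is precisely what makes the angle increments $\angle(e_n(p),e_{n+1}(p))$ summable and forces $e_n(p)$ to converge. This tempering/Egorov estimate — Oseledets' ``key lemma'' — is the technical heart; the remaining steps (chain-rule bookkeeping, Kingman, Birkhoff, measurability of $\mathcal{R}$) are routine. In dimension two the argument is comparatively light, since with at most two exponents the Oseledets flag collapses to the single dichotomy above and no induction on the number of distinct exponents is needed.
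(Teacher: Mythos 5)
The paper does not prove this statement: it is quoted verbatim from Barreira--Pesin (Theorems 2.1.1 and 2.1.2), so there is no internal proof to compare against. Your sketch is a correct outline of the standard two-dimensional Oseledets argument --- Kingman plus Birkhoff for the two exponents via singular values, convergence of the most-contracted directions under the spectral gap, and the backward construction for the fast line --- which is essentially the approach of the cited source, with the only glossed steps (the tempered control of the gap along the orbit, and matching the forward rate along $E^2_p$ to $\lambda^+$ in the two-sided version) correctly identified by you as the technical heart.
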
 
Notice that if $\mu$ is an ergodic invariant measure for $f$ then the Lyapunov exponents are constant for $\mu$-almost every point.
For $p\in \mathcal{R}$, let 
\[
\displaystyle E^-_p = \bigoplus_{i:\lambda_i(p)<0} E^i_p, \textrm{ and } E^+_p = \bigoplus_{i:\lambda_i(p)>0} E^i_p.
\]

The \textbf{Lyapunov exponents of a periodic point $p$} are the Lyapunov exponents for the invariant ergodic measure $ \frac{1}{\pi(p)} \sum_{j=0}^{\pi(p)} \delta_{f^j(p)}$, where $\pi(p)$ is the period of the point $p$ and $\delta_q$ is the dirac mass on the point $q$.

\begin{definition}
\label{pesinmanifolddef}
For $f$ a $C^{2}$ diffeomorphism the {\bf stable Pesin manifold} of the point $p\in \mathcal{R}$ is
\[
W^-(p) =\{ q\in M: \displaystyle \limsup_{n\to +\infty} \frac{1}{n} \log d(f^n(p), f^n(q)) <0 \}.
\]
Similarly one defines the {\bf unstable Pesin manifold} as
\[
W^+(p) = \{ q\in M: \displaystyle \limsup_{n\to +\infty} \frac{1}{n} \log d(f^{-n}(p), f^{-n}(q)) <0 \}.
\]
\end{definition}

\begin{remark}
Pesin proved that in this setting stable and unstable Pesin manifolds are immersed submanifolds, see section $4$ of \cite{ch1pesin77} for details.
\end{remark}

Let $S$ be a compact surface with no boundary and fix $f$ a $C^2$-diffeomorphism. Let $\mathbb{P}_e(f)$ be the set of ergodic invariant measures for $f$. A measure $\mu \in \mathbb{P}_e(f)$ is \textbf{hyperbolic} if for $\mu$-almost every point all the Lyapunov exponents are non zero. An ergodic hyperbolic measure is of \textbf{saddle type} if almost every point has one positive and one negative exponent. We will denote the set of ergodic hyperbolic measures of saddle type by $\mathbb{P}_h(f)$. For a measure $\mu \in \mathbb{P}_h(f)$, we will write $\lambda^-(\mu,f)$ and $\lambda^+(\mu,f)$ for the negative and positive Lyapunov exponents for $\mu$, respectively.

\begin{definition}
\label{def.homrelatedmeasures}
For two ergodic measures $\mu_1, \mu_2 \in \mathbb{P}_h(f)$, we write $\mu_1 \preceq \mu_2 $ if there are two sets $\Lambda_1, \Lambda_2$ such that $\mu_i(\Lambda_i)>0$ with the property that for any point $(p_1, p_2) \in \Lambda_1 \times \Lambda_2$ we have that $W^-(p_1)$ intersects transversely $W^+(p_2)$.
\end{definition}

\begin{definition}
\label{def.measuredhomo}
Two ergodic measures $\mu_1, \mu_2 \in \mathbb{P}_h(f)$ are \textbf{homoclincally related} if $\mu_1\preceq \mu_2$ and $\mu_2 \preceq \mu_1$. In this case we write $\mu_1 \sim \mu_2$. The set of measures homoclinically related to a measure $\mu$ is called the \textbf{measured homoclinic class} of $\mu$. Let us denote this set by $\mathcal{H}(\mu)$.  
\end{definition}

If $p$ is a hyperbolic periodic point, we will write $p \sim \mu$ if the measure $ \frac{1}{\pi(p)} \sum_{j=0}^{\pi(p)} \delta_{f^j(p)}$ is homoclinically related to the measure $\mu$. 

We refer the reader to section $2.4$ in \cite{buzzicrovisiersarig} for more properties of measured homoclinic classes. In particular, it is shown that the relation $\sim$ is indeed an equivalence relation. So that for any $\nu \in \mathcal{H}(\mu)$ it holds that $\mathcal{H}(\nu) = \mathcal{H}(\mu)$. 

\begin{definition}
\label{def.tophomclass}
The \textbf{topological homoclinic class} of $\mu\in \mathbb{P}_h(f)$ is the set
\[
\mathrm{HC}(\mu):= \overline{\{\mathrm{supp}(\nu): \nu \in \mathcal{H}(\mu)\}}.
\]
\end{definition}
In Corollary $2.14$ in \cite{buzzicrovisiersarig}, the authors proved that there exists a hyperbolic periodic orbit $\mathcal{O}$ which is homoclinically related to $\mu$ and such that
\[
\mathrm{HC}(\mu) = \mathrm{HC}(\mathcal{O}).
\]

One of the main ingredients in this paper is the following result:
\begin{theorem}[Corollary $3.3$ in \cite{buzzicrovisiersarig}]
\label{thm.criteriabcs}
Let $r>1$ and $f$ be a $C^r$-diffeomorphism of a closed surface $S$. Suppose that $\mu$ is an ergodic, hyperbolic, m.m.e. for $f$. Then:
\begin{enumerate}
\item Any ergodic, hyperbolic m.m.e. $\nu$ which is homoclinically related to $\mu$ is equal to $\mu$.
\item the support of $\mu$ is $\mathrm{HC}(\mu) = \mathrm{HC}(\mathcal{O})$, for some hyperbolic periodic orbit $\mathcal{O}$ which is homoclinically related to $\mu$.
\item There exists $l\in \N$ and probability measures $\mu_1, \cdots, \mu_l$ such that $\mu = \frac{1}{l} \sum_{j=1}^l \mu_j$ with the property that:
\begin{itemize}
\item if $j \neq i$ then $\mu_j$ and $\mu_i$ are singular with respect to each other, for $j,i\in \{1, \cdots, l\}$;
\item $f_*(\mu_j) = \mu_{j+1}$, for $j=1, \cdots ,l$ and setting $l+1=1$;
\item each measure $\mu_j$ is $f^l$ invariant, and the system $(f^l, \mu_j)$ is Bernoulli.
\end{itemize}  
\end{enumerate}
\end{theorem}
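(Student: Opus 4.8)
This statement is Corollary~3.3 of \cite{buzzicrovisiersarig}; the proof I would give follows theirs, and the plan is to transport the problem to the symbolic dynamics produced by Sarig in \cite{sarig}. We may assume $h_{\mathrm{top}}(f)>0$ (the relevant case, and built into the hypotheses of \cite{buzzicrovisiersarig}), so that $h_\mu(f)=h_{\mathrm{top}}(f)>0$. First I would apply Sarig's coding theorem: there is a locally compact countable-state topological Markov shift $(\Sigma,\sigma)$ and a H\"older continuous, finite-to-one factor map $\pi\colon\Sigma\to S$ with $\pi\circ\sigma=f\circ\pi$, whose image is a Borel set of full measure for every ergodic measure of positive entropy of $f$ (such measures being automatically hyperbolic on a surface). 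In particular $\mu$ lifts to an ergodic $\sigma$-invariant measure $\widehat\mu$ on $\Sigma$, and since a finite-to-one factor map preserves entropy, $h_{\widehat\mu}(\sigma)=h_\mu(f)=h_{\mathrm{top}}(f)$. Being ergodic, $\widehat\mu$ is carried by a single irreducible component $\Sigma'\subseteq\Sigma$; as every ergodic measure on $\Sigma'$ pushes to an $f$-invariant measure of the same entropy, the Gurevich entropy of $\Sigma'$ is at most $h_{\mathrm{top}}(f)$, while it is at least $h_{\widehat\mu}(\sigma)=h_{\mathrm{top}}(f)$, so $\widehat\mu$ is a measure of maximal Gurevich entropy on $\Sigma'$.

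Now I would invoke Gurevich's theory of countable Markov shifts: an irreducible, locally compact topological Markov shift of finite Gurevich entropy carries at most one measure of maximal entropy, and when it exists it is positive recurrent, fully supported, and given by a Ruelle--Perron--Frobenius eigenmeasure. Thus $\widehat\mu$ is \emph{the} measure of maximal entropy of $\Sigma'$. For part~(1), the key input from \cite{buzzicrovisiersarig} is the dictionary between the homoclinic relation on ergodic hyperbolic measures of $f$ and communication of irreducible components of $\Sigma$: a transverse intersection of Pesin stable and unstable manifolds of $\mu$-typical and $\nu$-typical points produces, through the Markov property, a symbolic orbit joining the components coding $\mu$ and $\nu$, so $\mu\sim\nu$ forces their lifts to live on the \emph{same} irreducible component $\Sigma'$. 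If $\nu$ is moreover an ergodic hyperbolic m.m.e., then $\widehat\nu$ is again a measure of maximal Gurevich entropy on $\Sigma'$, hence $\widehat\nu=\widehat\mu$ by uniqueness, and pushing forward by $\pi$ gives $\nu=\mu$.

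For part~(3), let $l$ be the period of the irreducible component $\Sigma'$. Then $\Sigma'$ decomposes into $l$ subsets cyclically permuted by $\sigma$, on each of which $\sigma^l$ is topologically mixing, and $\widehat\mu$ decomposes correspondingly as $\widehat\mu=\frac1l\sum_{j=1}^l\widehat\mu_j$, where the $\widehat\mu_j$ are mutually singular (supported on disjoint pieces), $\sigma_*\widehat\mu_j=\widehat\mu_{j+1}$, and each $\widehat\mu_j$ is $\sigma^l$-invariant and is the maximal-entropy measure of a mixing countable Markov shift. The maximal-entropy (Ruelle--Perron--Frobenius) measure of a topologically mixing countable Markov shift is Bernoulli --- the classical theorem for finite shifts, extended to the countable-state case. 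Finally I would transfer back to $S$: $\pi$ is finite-to-one and, on a $\sigma$-invariant set of full $\widehat\mu$-measure, injective, so it realizes a measurable isomorphism between $(\Sigma',\sigma,\widehat\mu)$ and $(\mathrm{supp}(\mu),f,\mu)$; setting $\mu_j:=\pi_*\widehat\mu_j$ yields the decomposition with $(f^l,\mu_j)$ Bernoulli (alternatively, one may use that a factor of a Bernoulli system is Bernoulli). Part~(2) follows from the same picture: since $\widehat\mu$ has full support in $\Sigma'$, $\mathrm{supp}(\mu)=\overline{\pi(\Sigma')}$, and every $\nu\in\mathcal{H}(\mu)$ lifts into $\Sigma'$, so $\mathrm{supp}(\nu)\subseteq\overline{\pi(\Sigma')}=\mathrm{supp}(\mu)$; hence $\mathrm{HC}(\mu)=\mathrm{supp}(\mu)$, and by Corollary~2.14 of \cite{buzzicrovisiersarig} this set equals $\mathrm{HC}(\mathcal{O})$ for a hyperbolic periodic orbit $\mathcal{O}$ homoclinically related to $\mu$.

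The heart of the matter --- and the main obstacle --- is everything hidden inside ``Sarig's coding'' and ``the dictionary''. Building a countable Markov partition adapted to a non-uniformly hyperbolic surface diffeomorphism (Pesin charts, $\varepsilon$-admissible manifolds, the shadowing and inverse theorems, and the refinement making $\pi$ finite-to-one) is a long technical construction, and matching the homoclinic relation on the manifold with communication of irreducible components --- the step underpinning part~(1) --- requires propagating stable and unstable manifolds carefully through the coding. By comparison, the purely symbolic and measure-theoretic inputs (Gurevich's uniqueness theorem, Bernoullicity of the maximal-entropy measure of a mixing countable Markov shift, and Ornstein's theorem that factors of Bernoulli systems are Bernoulli) can be treated as black boxes.
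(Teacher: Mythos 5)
This statement is quoted in the paper as Corollary~3.3 of \cite{buzzicrovisiersarig} without proof, and your outline faithfully reproduces the Buzzi--Crovisier--Sarig strategy exactly as the paper's introduction describes it (Sarig's coding, reduction to a single irreducible component via the homoclinic relation, Gurevich's uniqueness of the maximal-entropy measure, and the spectral/Bernoulli decomposition). Your sketch is consistent with that source and correctly flags the coding and the ``homoclinic relation $\leftrightarrow$ irreducible component'' dictionary as the genuinely hard steps, so there is nothing to object to here.
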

We remark that the original statement in \cite{buzzicrovisiersarig}, for item (2) of Theorem \ref{thm.criteriabcs} also gives a formula for the number $l$. However, in our application we will not need that. 

One of the consequences of Theorem \ref{thm.criteriabcs} is that for a saddle type hyperbolic measure $\mu$, there exists at most one m.m.e. in $\mathcal{H}(\mu)$.

\subsection{Basic sets for the standard map}
\label{sec.standardback}

Let us just recall some known facts about the standard map that will be used later. Consider the involution $\mathcal{I}:\T^2 \to \T^2$ given by $\mathcal{I}(x,y) = (y,x)$. One may easily check that the following equality holds:
\begin{equation}
\label{eq.inversestandard}
f_k^{-1} = \mathcal{I} \circ f_k \circ \mathcal{I}, \forall k\in \R.
\end{equation}
In other words, by exchanging the $x$ and $y$ coordinates, the system $f_k$ behaves like $f_k^{-1}$. Since
\[
Df_k(x,y) = 
\begin{pmatrix}
2\pi k \cos(2\pi x) + 2 & -1\\
1 & 0
\end{pmatrix},
\]
we have that for $k$ sufficiently large 
\begin{equation}
\label{eq.c1norm}
\frac{1}{4\pi k} < \|Df_k^{-1}\|^{-1} \leq \|Df_k\| < 4\pi k \textrm{ and } \|Df_k^2\| < 5\pi^{2} k.
\end{equation}
Moreover, the estimates \eqref{eq.c1norm} hold in a $C^2$-neighborhood of $f_k$. In \cite{duarte}, Duarte obtains the existence of the following basic sets for the standard map.

\begin{theorem}[Theorem A in \cite{duarte}]
\label{thm.duartebasicset}
There exists $k_0 \in \R$ such that for any $k\in [k_0, +\infty)$ the following is true: there is a basic set for $f_k$, $\Lambda_k$, that verifies:
\begin{itemize}
\item the dynamics of $f_k|_{\Lambda_k}$ is topologically conjugated to a full Bernoulli shift with $2n_k$ symbols, where
\[
\displaystyle \lim_{k\to +\infty} \frac{2n_k}{4k} = 1;
\]

\item the set $\Lambda_k$ is $\frac{4}{k^{\frac{1}{3}}}$-dense in $\T^2$.
\end{itemize}
\end{theorem}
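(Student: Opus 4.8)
The plan is to realize $\Lambda_k$ through the classical cone-field together with Conley--Moser (``strip'') horseshoe scheme, carried out in the region where $f_k$ is strongly hyperbolic, and then to read off the symbol count and the density from the elementary geometry of that region. Fix a scale $a_k\asymp k^{-1/3}$, let $\Omega_k\subset\T^2$ be the complement of the two vertical strips of width $2a_k$ about the lines $x=\tfrac14$ and $x=\tfrac34$ (where $\cos 2\pi x=0$, and near which the off-diagonal entry $2\pi k\cos 2\pi x$ of $Df_k$ vanishes), and set $\Lambda_k:=\bigcap_{n\in\Z}f_k^{\,n}(\Omega_k)$. On $\Omega_k$ one has $|\cos 2\pi x|\gtrsim a_k$, so $|2\pi k\cos 2\pi x+2|\gtrsim k^{2/3}\to\infty$.

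First I would prove that $\Lambda_k$ is hyperbolic by exhibiting invariant cone fields. From the explicit matrix $Df_k(x,y)$ one checks that the horizontal cone $\mathcal C^u=\{(u,v):|v|\le|u|\}$ is strictly $Df_k$-invariant with expansion $\gtrsim k^{2/3}$ whenever $p$ and $f_k(p)$ lie in $\Omega_k$: if $|u|\ge|v|$, then $Df_k(p)(u,v)=\bigl((2\pi k\cos 2\pi x+2)u-v,\ u\bigr)$ has first coordinate of modulus $\gtrsim k^{2/3}|u|$, hence slope $\lesssim k^{-2/3}$ and norm $\gtrsim k^{2/3}\|(u,v)\|$. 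Since the second coordinate of $f_k$ equals $x$, every point of $\Lambda_k$ automatically has both its $x$-coordinate and its $y$-coordinate (the $x$-coordinate of its $f_k^{-1}$-image) in $\Omega_k$, so the symmetric computation for $Df_k^{-1}$ (equivalently, the involution identity \eqref{eq.inversestandard}) shows that the vertical cone $\mathcal C^s=\{(u,v):|u|\le|v|\}$ is strictly $Df_k^{-1}$-invariant with expansion $\gtrsim k^{2/3}$ along $\Lambda_k$. Standard cone criteria then give that $\Lambda_k$ is hyperbolic, with nearly vertical stable leaves and nearly horizontal unstable leaves, all $C^1$ curves.

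Next I would extract the Markov structure. For each fixed $y$ the first coordinate of $f_k$ is the circle map $x\mapsto 2x-y+k\sin 2\pi x$, whose lift has derivative $2+2\pi k\cos 2\pi x$ vanishing only near $x=\tfrac14,\tfrac34$ and total variation $\approx 4k$ over one period. Splitting the domain at the critical points and then into subintervals of variation one produces pairwise disjoint ``vertical'' strips $V_1,\dots,V_{2n_k}$ inside $\Omega_k$, each of the form $I_i\times\T^1$, with $f_k(I_i\times\T^1)=\T^1\times I_i$; thus each $f_k(V_i)$ crosses every $V_j$ exactly once, fully, in the unstable (horizontal) direction, and $\Lambda_k$ is precisely the maximal invariant subset of $\bigcup_iV_i$. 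Deleting the $a_k$-neighborhoods of the two critical lines discards only $O(ka_k^2)=O(k^{1/3})$ of these strips, so $2n_k=4k-O(k^{1/3})$ and $2n_k/(4k)\to 1$. Together with the cone conditions above and a bounded-distortion estimate along each $I_i$ (which, using \eqref{eq.c1norm} and the choice $a_k\asymp k^{-1/3}$, in fact tends to $0$), the Conley--Moser conditions hold, so $f_k|_{\Lambda_k}$ is topologically conjugate to the full two-sided shift on $2n_k$ symbols, and $\Lambda_k$ is transitive, locally maximal and hyperbolic, i.e.\ a basic set. For the density one notes $\Lambda_k\subset\bigl(\bigcup_iI_i\bigr)\times\bigl(\bigcup_iI_i\bigr)$ while the $I_i$ cover $\T^1$ up to gaps of size $O(a_k)$; since in addition the local product structure of the horseshoe fills this region at scale $O(k^{-2/3})\ll a_k$, one obtains that $\Lambda_k$ is $O(a_k)=O(k^{-1/3})$-dense in $\T^2$, and an explicit choice of the constant in $a_k$ yields the stated bound.

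The step I expect to be the main obstacle is the Markov construction, because of the tension built into the single scale $a_k$: narrowing the removed strips improves both the symbol count and the density, but retaining uniform hyperbolicity and, above all, bounded distortion near their edges forces $a_k\gg k^{-1/2}$, while the requirement $2n_k/(4k)\to1$ forces the removed portion $O(ka_k^2)$ to be $o(k)$; the value $a_k\asymp k^{-1/3}$ comfortably satisfies all three. Verifying the distortion bound uniformly over the $\asymp k$ branches, on which $\|Df_k\|$ ranges from $O(1)$ up to $O(k)$ (cf.\ \eqref{eq.c1norm}) and which degenerate near the critical lines, is the technical heart of the argument.
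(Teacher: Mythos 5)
This statement is Theorem~A of Duarte's paper \cite{duarte}; it is quoted here as a black box and the present paper contains no proof of it, so there is no internal argument to compare yours against. Your sketch does follow the route of Duarte's original construction (hyperbolicity outside a critical region of width $\asymp k^{-1/3}$ around the lines $\cos 2\pi x=0$, invariant cone fields, a full-shift coding by $\approx 4k$ vertical strips, and the density read off from the mesh of the resulting rectangles), and the scalings you identify are the right ones.

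One step you present as routine is actually where the content lies. The assertion that $f_k(V_i)$ meets every $V_j$ ``exactly once, fully'' is, as a statement about the sets $f_k(I_i\times\T^1)=\T^1\times I_i$, true for \emph{arbitrary} disjoint intervals $I_i$, so it cannot by itself control the number of symbols: otherwise one could take arbitrarily many thin disjoint strips and produce full shifts of arbitrarily large entropy, contradicting $h_{\mathrm{top}}(f_k)\le\log\|Df_k\|\le\log(4\pi k)$ (Ruelle plus the variational principle). The real Conley--Moser condition is that the image of every \emph{unstable sub-strip} of $V_i$ --- a curve tangent to the horizontal cone of horizontal extent only $|I_i|$ --- wraps at least once around $\T^1$ and therefore fully crosses each $V_j$; this is precisely what forces $|I_i|\gtrsim |2+2\pi k\cos 2\pi x|^{-1}$ on $I_i$, i.e.\ the ``variation one'' subdivision, and is what caps the symbol count at $\sum_i 1\le\int_0^1|2+2\pi k\cos 2\pi x|\,dx\approx 4k$. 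Your sketch does perform that subdivision, so the construction is sound, but the crossing property must be justified through this curve-image estimate rather than through the set-theoretic intersection. Two smaller points: your two definitions of $\Lambda_k$ (as $\bigcap_n f_k^n(\Omega_k)$ and as the maximal invariant set of $\bigcup_i V_i$) need not coincide, and you should commit to the latter to get local maximality and the exact symbol count; and the endpoints of the variation-one intervals a priori depend on the basepoint of the lift, but since the variation $\int_{I_i}|2+2\pi k\cos 2\pi x|\,dx$ is independent of $y$, fixed intervals $I_i$ do work.
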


For any $m\in \N$, since the full Bernoulli shift with $m$-symbols has topological entropy $\log m$. We conclude that the basic set $\Lambda_k$ obtained in Theorem \ref{thm.duartebasicset} has topological entropy $h_{\mathrm{top}}(f_k|_{\Lambda_k}) = \log 2n_k$. Since $\Lambda_k$ is a basic set, one may fix a small $C^1$-neighborhood $\mathcal{U}$ of $f_k$ in $\mathrm{Diff}^1(\T^2)$ such that for any $g\in \mathcal{U}$ there is a basic set $\Lambda_k(g)$ that verifies: $g|_{\Lambda_k(g)}$ is topologically conjugated to $f_k|_{\Lambda_k}$; the set $\Lambda_k(g)$ is close to the set $\Lambda_k$ in the Hausdorff distance. In particular, for any $g\in \mathcal{U}_k$ we have $h_{\mathrm{top}}(g) \geq h_{\mathrm{top}}(g|_{\Lambda_k(g)}) = \log 2n_k$, and we may assume that $\Lambda_k(g)$ is $\frac{8}{k^{\frac{1}{3}}}$-dense in $\T^2$. As an immediate consequence of Theorem \ref{thm.duartebasicset}, we obtain:

\begin{corollary}
\label{cor.entropyduarte}
For any $\delta>0$, there exists $k_0\in \R$ such that for any $k\in [k_0, +\infty)$ there exists $\mathcal{U}$ a $C^1$-neighborhood of $f_k$ in $\mathrm{Diff}^1(\T^2)$ with the property that any diffeomorphism $g\in \mathcal{U}$ has a basic set $\Lambda_k(g)$ which is $\frac{8}{k^{\frac{1}{3}}}$-dense in $\T^2$ and with topological entropy greater than $(1-\delta)\log k$.
\end{corollary}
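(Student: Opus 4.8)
The plan is to extract the statement directly from Theorem \ref{thm.duartebasicset} together with the structural stability of hyperbolic basic sets, exactly as sketched in the paragraph preceding the corollary, so the argument is short.

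First I would fix $\delta > 0$ and use the asymptotic $\lim_{k\to+\infty} \frac{2n_k}{4k} = 1$. This gives $\log(2n_k) = \log k + \log 4 + o(1)$ as $k \to +\infty$, hence $\frac{\log(2n_k)}{\log k} \to 1$; so there is $k_1 \in \R$, which we may take at least as large as the $k_0$ of Theorem \ref{thm.duartebasicset}, such that $\log(2n_k) > (1-\delta)\log k$ for all $k \geq k_1$. From now on fix such a $k \geq k_1$ and let $\Lambda_k$ be the basic set provided by Theorem \ref{thm.duartebasicset}, so that $f_k|_{\Lambda_k}$ is topologically conjugate to the full shift on $2n_k$ symbols, and therefore $h_{\mathrm{top}}(f_k|_{\Lambda_k}) = \log(2n_k)$, while $\Lambda_k$ is $\frac{4}{k^{1/3}}$-dense in $\T^2$.

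Next I would invoke the structural stability of the basic (hyperbolic, locally maximal) set $\Lambda_k$: there is a $C^1$-neighborhood $\mathcal{U}$ of $f_k$ in $\mathrm{Diff}^1(\T^2)$ and, for each $g \in \mathcal{U}$, a continuation $\Lambda_k(g)$ which is a basic set for $g$ with $g|_{\Lambda_k(g)}$ topologically conjugate to $f_k|_{\Lambda_k}$, and with $\Lambda_k(g)$ converging to $\Lambda_k$ in the Hausdorff metric as $g \to f_k$. Shrinking $\mathcal{U}$ if necessary, I may assume the Hausdorff distance between $\Lambda_k(g)$ and $\Lambda_k$ is at most $\frac{4}{k^{1/3}}$, so that $\Lambda_k(g)$ is $\frac{8}{k^{1/3}}$-dense in $\T^2$. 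Since topological entropy is a conjugacy invariant, $h_{\mathrm{top}}(g) \geq h_{\mathrm{top}}(g|_{\Lambda_k(g)}) = h_{\mathrm{top}}(f_k|_{\Lambda_k}) = \log(2n_k) > (1-\delta)\log k$, which is the desired conclusion with $k_0 := k_1$.

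The argument presents no real obstacle; the only point requiring slight care is that the neighborhood $\mathcal{U}$ depends on $k$, both through the structural stability of $\Lambda_k$ and through the requirement that the continuation remain $\frac{8}{k^{1/3}}$-dense. Since $k$ is fixed before $\mathcal{U}$ is chosen this causes no difficulty, and all the inputs — the explicit symbol count and density estimate of \cite{duarte}, structural stability of basic sets, and invariance of topological entropy under conjugacy — are classical.
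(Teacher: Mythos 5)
Your proposal is correct and follows essentially the same route as the paper: the paper likewise derives the corollary from Theorem \ref{thm.duartebasicset} by computing $h_{\mathrm{top}}(f_k|_{\Lambda_k})=\log 2n_k$, invoking structural stability of the basic set to obtain a conjugate, Hausdorff-close continuation $\Lambda_k(g)$ (hence $\frac{8}{k^{1/3}}$-dense), and using $\frac{2n_k}{4k}\to 1$ to get $\log 2n_k>(1-\delta)\log k$ for $k$ large.
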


In \cite{duarte}, Duarte also proved that for a generic large $k$, the set $\Lambda_k$ above is accumulated by elliptic islands. Duarte's result was further improved by Gorodetski in \cite{gorodetski}.

\begin{theorem}[Theorem $1$ in \cite{gorodetski}]
\label{thm.gorod}
There exists $k_0 \in \R$ and a dense $G_{\delta}$-set $\mathcal{R} \subset [k_0, + \infty)$, such that for any $k\in \mathcal{R}$ there is an infinite sequence of basic sets
\begin{equation}
\label{eq.gorod}
\Lambda_k^{(0)} \subset \Lambda_k^{(1)} \subset \cdots \subset \Lambda_k^{(n)} \subset \cdots
\end{equation}
 that has the following properties:
 \begin{enumerate}
 \item $\mathrm{dim}_H(\Lambda_k^{(n)}) \to 2$, as $n\to +\infty$;
 \item let $\Omega_k := \overline{\bigcup_{n\in \N} \Lambda_k^{(n)}}$. Then $\Omega_k$ is a transitive invariant set for $f_k$ and $\mathrm{dim}_H(\Omega_k) = 2$;
 \item for any $x\in \Omega_k$ and any $\varepsilon>0$, we have that $\mathrm{dim}_H(B(x,\varepsilon) \cap \Omega_k) = 2$;
 \item each point of $\Omega_k$ is an accumulation point of elliptic islands for $f_k$.
 \end{enumerate}
\end{theorem}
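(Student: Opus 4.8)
The plan is to build the nested horseshoes $\Lambda_k^{(n)}$ by a bootstrap --- start from Duarte's horseshoe and repeatedly enlarge it across homoclinic tangencies, gaining a definite amount of Hausdorff dimension at each step --- and then to extract the residual set $\mathcal R$ by a Baire category argument in the parameter $k$. By Theorem~\ref{thm.duartebasicset}, for $k\ge k_0$ the map $f_k$ has a transitive horseshoe $\Lambda_k$ conjugate to the full shift on $2n_k\sim 4k$ symbols, which is $\frac{4}{k^{1/3}}$-dense; by the symmetry \eqref{eq.inversestandard} it can be chosen $\mathcal I$-invariant. For a horseshoe $\Lambda$ of a surface diffeomorphism write $d^s(\Lambda),d^u(\Lambda)$ for the Hausdorff dimensions of the intersections of $\Lambda$ with a local stable, resp.\ unstable, manifold, so that $\mathrm{dim}_H\Lambda=d^s(\Lambda)+d^u(\Lambda)$ (dimension formula for surface horseshoes, McCluskey--Manning / Palis--Takens). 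Duarte's hyperbolicity and bounded-distortion estimates for the standard family, together with \eqref{eq.c1norm} --- which show that along $\Lambda_k$ the per-iterate expansion and contraction rates are comparable to powers of $k$ --- give $d^s(\Lambda_k)=d^u(\Lambda_k)\to 1$ as $k\to\infty$; in particular, for $k$ large, $\mathrm{dim}_H\Lambda_k=d^s(\Lambda_k)+d^u(\Lambda_k)>1$ and tends to $2$. Moreover, Duarte's analysis of the standard family produces, for a dense set of large parameters, a homoclinic tangency associated with a periodic orbit homoclinically related to the continuation of $\Lambda_k$ (this is the mechanism behind the accumulation by elliptic islands recalled in the Introduction).

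The core of the argument is a dimension-increment step: if for some $k$ the map $f_k$ has a transitive horseshoe $\Lambda$ with $d^s(\Lambda)+d^u(\Lambda)>1$ carrying a homoclinic tangency, then for a dense open set of parameters $k'$ near $k$ the map $f_{k'}$ has a transitive horseshoe $\Lambda'\supset\Lambda$ which again carries a homoclinic tangency and satisfies $\mathrm{dim}_H\Lambda'\ge\min\{2,\ \mathrm{dim}_H\Lambda+c\}$ for some $c=c(d^s(\Lambda),d^u(\Lambda))>0$. This follows from the theory of generic unfoldings of homoclinic tangencies of surface diffeomorphisms (Newhouse, Palis--Takens) and the analysis of regular (dynamically defined) Cantor sets (Moreira--Yoccoz): renormalizing $f_{k'}$ near the generically unfolded tangency yields a nearly affine-times-quadratic transition map whose maximal invariant set contains $\Lambda$ together with rescaled, ``linked'' copies of the stable and unstable Cantor sets of $\Lambda$; a distortion estimate --- where Duarte's bounds for the standard family make the constants explicit and uniform along the construction --- shows that the unstable Cantor set of $\Lambda'$ is strictly thicker than that of $\Lambda$, and the construction can be arranged so that $\Lambda'\supset\Lambda$ and so that $\Lambda'$ carries a new homoclinic tangency. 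Iterating, starting from $\Lambda_k$ and passing to smaller and smaller parameter intervals, one obtains for suitable $k$ a finite increasing chain $\Lambda_k^{(0)}\subset\cdots\subset\Lambda_k^{(n)}$ of transitive horseshoes with $\mathrm{dim}_H\Lambda_k^{(i)}>2-\frac1{i+1}$; a short argument (the increments needed to pass from $<2-\frac1n$ to $>2-\frac1{n+1}$ are far smaller than the available $c$) shows the dimensions can be driven all the way to $2$.

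A Baire category construction over the parameter now yields $\mathcal R$. One builds inductively a nested family of parameter intervals: at level $0$ one uses $\Lambda_k$ and the density of its homoclinic-tangency parameters (Duarte); at level $n+1$ one subdivides each level-$n$ interval, applies the increment step near a tangency parameter inside it, and records the enlarged chain --- valid throughout a small sub-interval by continuation of horseshoes, extending the level-$n$ chain, and again equipped with a tangency parameter by Newhouse's theory (applicable since $d^s+d^u>1$). Letting $\mathcal R$ be the intersection over all levels of the union of that level's intervals gives a dense $G_\delta$ subset of $[k_0,+\infty)$; for $k\in\mathcal R$ the chains at the levels containing $k$ are, by construction, related by inclusion and assemble into a single infinite increasing chain \eqref{eq.gorod} of transitive horseshoes with $\mathrm{dim}_H\Lambda_k^{(n)}\to 2$, which is item~(1). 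Put $\Omega_k=\overline{\bigcup_n\Lambda_k^{(n)}}$. Then $\mathrm{dim}_H\Omega_k\ge\sup_n\mathrm{dim}_H\Lambda_k^{(n)}=2$, so $\mathrm{dim}_H\Omega_k=2$; since the $\Lambda_k^{(n)}$ are nested transitive compact invariant sets, a Baire category argument in the compact space $\Omega_k$ produces a point whose orbit is dense in every $\Lambda_k^{(N)}$, hence in $\Omega_k$, giving item~(2). For item~(3): given $x\in\Omega_k$ and $\varepsilon>0$ choose $N$ large and $y\in\Lambda_k^{(N)}$ with $d(x,y)<\varepsilon$; by the local product structure and bounded distortion of the transitive horseshoe $\Lambda_k^{(N)}$ its Hausdorff dimension is attained in every relative ball, so $\mathrm{dim}_H\big(B(x,\varepsilon)\cap\Omega_k\big)\ge\mathrm{dim}_H\big(B(y,\varepsilon-d(x,y))\cap\Lambda_k^{(N)}\big)=\mathrm{dim}_H\Lambda_k^{(N)}>2-\frac1N$, and $N\to\infty$ gives $2$. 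Finally, for item~(4), each $\Lambda_k^{(n)}$ is created by unfolding a homoclinic tangency, so by Newhouse's construction in the area-preserving setting elliptic islands accumulate on every point of $\Lambda_k^{(n)}$; since $\bigcup_n\Lambda_k^{(n)}$ is dense in $\Omega_k$, every point of $\Omega_k$ is an accumulation point of elliptic islands.

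The step I expect to be the main obstacle is the dimension-increment claim: one must secure a genuine gain of Hausdorff dimension at each homoclinic bifurcation --- not merely the birth of some new horseshoe --- while simultaneously keeping the new horseshoe nested over the previous one and keeping the quantitative data (distortion bounds, the increments $c$, the secondary tangencies) uniform along the whole construction, so that the dimensions can actually be driven up to $2$. This is precisely where the explicit estimates Duarte established for the standard family, fed into the Newhouse--Palis--Takens--Moreira--Yoccoz analysis of dynamically defined Cantor sets, do the real work.
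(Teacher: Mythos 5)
There is no proof of this statement in the paper to compare against: Theorem \ref{thm.gorod} is quoted verbatim as Theorem $1$ of Gorodetski's paper \cite{gorodetski} and is used as a black box (only its statement feeds into the proof of Theorem \ref{thm.genericparamenters}). So what you have written is an attempted reconstruction of Gorodetski's theorem itself, not of anything argued in this paper.

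As such a reconstruction, the skeleton is reasonable (Duarte's horseshoe as a seed, homoclinic tangencies as the mechanism, a nested parameter-interval/Baire argument to get the dense $G_\delta$ of parameters, and the soft deductions of items (2)--(4) from item (1)), but the step you yourself flag is a genuine gap, not merely a technical obstacle. The ``dimension-increment'' claim --- that unfolding a tangency yields, for a \emph{dense open} set of nearby parameters, a transitive horseshoe $\Lambda'\supset\Lambda$ with $\mathrm{dim}_H\Lambda'\ge\min\{2,\mathrm{dim}_H\Lambda+c\}$ for a uniform $c=c(d^s,d^u)>0$ --- is not a theorem available from Newhouse, Palis--Takens or Moreira--Yoccoz: those results give creation of tangencies and thickness/dimension estimates for the Cantor sets involved, but no uniform, iterable gain of Hausdorff dimension, and certainly not on open sets of parameters (the relevant phenomena are residual or positive-measure in the parameter, which also undermines your level-by-level ``dense open'' bookkeeping). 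The actual mechanism in \cite{gorodetski} is different: one shows that a generic unfolding of a homoclinic tangency produces, for suitable parameters, \emph{locally maximal hyperbolic sets of Hausdorff dimension arbitrarily close to $2$ near the tangency in one step} (results of Gorodetski--Kaloshin type), and then combines this with Duarte's density of tangency parameters for the standard family and with homoclinic relation to the continuation of $\Lambda_k$ to assemble the nested sequence; no incremental bootstrap with uniform increments is needed or proved. In addition, your claim that $d^s(\Lambda_k)=d^u(\Lambda_k)\to1$ for Duarte's basic set is irrelevant to the theorem as stated (which concerns a \emph{fixed} generic $k$, where $\mathrm{dim}_H\Lambda_k$ is bounded away from $2$) and is not something you can extract from \eqref{eq.c1norm} alone. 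If you want to cite rather than prove the hard input, the correct reference point is Gorodetski's tangency-to-dimension-two theorem, not the classical fractal-dimension theory you invoke.
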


We remark that in the statements of both Duarte and Gorodetski's theorems above, we do not include all the properties that they obtained for these basic sets.

\section{Estimates on invariant manifolds for measures with large exponents}
\label{intmnfld}

The estimates in this section are the equivalent of the estimates made by the author in Section $3$ of \cite{obataergodicity} in the partially hyperbolic setting. The main goal of this section is to prove Proposition \ref{prop.estimateprop1} below. Throughout this section we fix $\delta = \frac{1}{600}$.

\begin{proposition}
\label{prop.estimateprop1}
For $k$ large enough and for any $f_k$-ergodic probability measure $\mu$ such that 
\begin{equation}
\label{eq.measurelargeexponents}
\min \{\lambda^+(\mu,f_k), -\lambda^-(\mu,f_k)\} > (1-\delta) \log k,
\end{equation}
there exists a set with $\mu$-measure larger than $\frac{1-7\delta}{1+7\delta}$, such that:

For any $p$ in that set, there exist a stable and an unstable manifolds at $p$ with length bounded from below by $k^{-7}$. Moreover, the stable manifold is transverse to the horizontal direction and the unstable manifold is transverse to the vertical direction. 
\end{proposition}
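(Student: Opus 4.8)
The plan is to combine the quantitative stable-manifold construction of Crovisier--Pujals (invoked here as a tool: large Lyapunov exponents force stable/unstable manifolds of a definite size) with a two-fold application of Pliss's lemma, using the $C^1$-bounds \eqref{eq.c1norm} on $\|Df_k^{\pm 1}\|$ to turn abstract Pesin estimates into concrete geometric control on $\T^2$. First I would fix $\mu$ as in \eqref{eq.measurelargeexponents} and record, via Oseledets (Theorem \ref{oseledets}) and Birkhoff, that for $\mu$-a.e.\ $p$ the Birkhoff averages $\frac1n\sum_{j<n}\log\|Df_k(f_k^j p)|_{E^+}\|$ converge to $\lambda^+(\mu,f_k)>(1-\delta)\log k$, and symmetrically for the stable direction under $f_k^{-1}$ using \eqref{eq.inversestandard}. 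Then I would apply Pliss's lemma (Lemma \ref{pliss}) with a threshold slightly below $(1-\delta)\log k$ — say $(1-2\delta)\log k$ — against the upper bound $\log\|Df_k\|<\log(4\pi k)$ on the expansion: this yields a set $\Gamma^u$ of $\mu$-measure at least $\frac{(1-\delta)-(1-2\delta)}{\log(4\pi k)/\log k - (1-2\delta)}\cdot(\text{something})$; the point is that for $k$ large the ratio $\log(4\pi k)/\log k\to 1$, so the Pliss density is close to $1$, and a careful bookkeeping of the constants gives measure $\ge \frac{1-7\delta}{1+7\delta}$ (this is where the specific value $\delta=\frac1{600}$ and the $7\delta$'s are calibrated). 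The same argument applied to $f_k^{-1}$ gives a set $\Gamma^s$; one intersects the two and works on $\Gamma:=\Gamma^s\cap\Gamma^u$, possibly shrinking once more, keeping the measure bound.

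Next, at each $p\in\Gamma$ the Pliss times give uniform exponential contraction estimates along the forward (resp.\ backward) orbit at rate $\sim k^{-(1-2\delta)}$, which is the hypothesis needed to feed into the Crovisier--Pujals construction (Proposition \ref{prop.estimateprop1}'s companion, the quantitative version in \cite{ch1crovisierpujalsstronglydissipative}). This produces an embedded stable manifold $W^-_{\mathrm{loc}}(p)$ and unstable manifold $W^+_{\mathrm{loc}}(p)$ whose length is bounded below in terms of the contraction rate and the $C^2$-norm of $f_k$; since $\|Df_k^2\|<5\pi^2 k$ by \eqref{eq.c1norm}, the size lower bound is a definite negative power of $k$, and I would check that the exponents can be arranged so that this power is at least $k^{-7}$ (the exponent $7$ being a convenient round number absorbing the $C^2$-norm, the Pliss rate, and the Crovisier--Pujals constants). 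The transversality claim — stable manifold transverse to the horizontal, unstable transverse to the vertical — follows because the Oseledets splitting $E^+\!\oplus E^-$ at Pliss points is uniformly bounded away from the weak/degenerate cone: from the explicit form of $Df_k$ one sees the strong unstable direction is close to the cone around $(1,0)$ only in the critical strips $|\cos 2\pi x|$ small, and at Pliss points the orbit spends a controlled proportion of time outside those strips, forcing the Lyapunov subspaces into transverse position relative to the coordinate axes; quantitatively one estimates the slope of $E^\pm$ using the $2\times 2$ matrix and the Pliss lower bound on the product norms.

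The main obstacle I expect is the bookkeeping that converts the soft statements ``$\log(4\pi k)/\log k\to 1$'' and ``Crovisier--Pujals gives \emph{some} definite size'' into the \emph{explicit} inequalities $\mu(\Gamma)\ge\frac{1-7\delta}{1+7\delta}$ and $\mathrm{length}\ge k^{-7}$ with $\delta=\frac1{600}$ fixed in advance: one must choose the Pliss thresholds, the number of Pliss applications, and the neighborhood of $f_k$ in exactly the right order so that all error terms are comparable to $\delta$, and so that the transversality-to-axes estimate (which also costs a few $\delta$'s, since controlling the slope of $E^\pm$ requires the orbit to avoid the critical strips for a positive-density set of times) is compatible with the size estimate. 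A secondary subtlety is that Crovisier--Pujals is stated for $C^2$ (or $C^{1+}$) diffeomorphisms and gives stable manifolds for the \emph{diffeomorphism}, not a priori tangent to the Oseledets stable direction; one must invoke the standard fact that these graph-transform stable manifolds are indeed tangent to $E^-$ at $p$, which is where the transversality statement gets its meaning. Everything here is manifestly $C^1$-robust in the neighborhood where \eqref{eq.c1norm} holds, which is why the same proof will later yield the $C^2$-robust conclusions, but that robustness is not needed for the present statement.
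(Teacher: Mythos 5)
Your architecture is the same as the paper's: Pliss's lemma produces a positive-density set of hyperbolic times, the quantitative Crovisier--Pujals construction turns the resulting uniform estimates into stable/unstable curves of definite length, and the transversality to the coordinate axes comes from the explicit form of $Df_k$. However, the specific Pliss application you propose does not deliver the measure bound, and this is the quantitative heart of the statement, not mere bookkeeping. With tolerance $\varepsilon=\delta\log k$ (your threshold $(1-2\delta)\log k$ against the a priori bound $\alpha_1=-\log(4\pi k)$ and the average $\alpha_2=-(1-\delta)\log k$), the density guaranteed by Lemma \ref{pliss} is
\[
\frac{\varepsilon}{\alpha_2+\varepsilon-\alpha_1}=\frac{\delta\log k}{2\delta\log k+\log 4\pi}\approx\frac12,
\]
which is nowhere near $\frac{1}{1+7\delta}$ for $\delta=\frac1{600}$; intersecting the forward and backward Pliss sets would then give no lower bound at all. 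The correct calibration goes the other way: one must \emph{sacrifice a macroscopic fraction of the exponent} rather than an $O(\delta)$ fraction. The paper takes $\varepsilon=\frac16\log k$, settling for the weaker uniform per-iterate rate $k^{-5/6+\delta}<k^{-4/5}$ along the Pliss subsequence; then the density is $\frac{1}{1+6\delta+6\log 4\pi/\log k}\ge\frac{1}{1+7\delta}$, and the intersection of (images of) the forward and backward Pliss sets has measure at least $1-\frac{14\delta}{1+7\delta}=\frac{1-7\delta}{1+7\delta}$. The rate $k^{-4/5}$ is still far more than enough for the Crovisier--Pujals pinching condition $\tilde\sigma\tilde\rho/(\sigma\rho)>\sigma$ with $\tilde\sigma=(4\pi k)^{-1}$, $\rho=\sigma^2$, $\tilde\rho=\tilde\sigma^2$, and it is what produces the exponent $7$ in $k^{-7}$ after tracking the constants.

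Two smaller points. First, the set on which the conclusion holds should be taken as $Z_\mu=f(Z_\mu^-)\cap f^{-1}(Z_\mu^+)$ (images of the Pliss sets, not the Pliss sets themselves): this guarantees the one-step bounds $\|Df(p)|_{E^+_p}\|\ge k^{4/5}$ and $\|Df^{-1}(p)|_{E^-_p}\|\ge k^{4/5}$ at every point of the set, and the transversality then follows from a single-iterate computation with the matrix $Df_k$ (any unit vector making angle $\gtrsim k^{-2/5}$ with the horizontal is expanded by less than $k^{4/5}$, so $E^+_p$ must lie in a horizontal cone of aperture $k^{2/5}$). Your proposed route via ``positive density of times outside the critical strips'' is in the right spirit but is both less direct and harder to quantify; the one-step argument is what gives the explicit cone aperture that the later sections need. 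Second, your worry about tangency of the Crovisier--Pujals manifolds to $E^-_p$ is legitimate but is resolved inside that construction itself: the graph transform there is performed in adapted coordinates aligned with $E^-_p$, and the resulting curve is tangent to an arbitrarily thin cone around $E^-_p$ at the base point, which is exactly what transfers the cone estimate on $E^\pm_p$ to the manifolds.
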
 

The proof of Proposition \ref{prop.estimateprop1} will follow from Lemma \ref{setzs} and Proposition \ref{sizemnfld} below.

\begin{remark}
From now on, when we refer to $f_k$ we will omit the dependence of $k$ by writing $f= f_k$. 
\end{remark}

We fix two scales $\theta_1= k^{-\frac{2}{5}}$ and $\theta_2 = k^{-\frac{3}{5}}$.

\subsection{ Points with good contraction and expansion}
Let $\Lambda_{\mu}$ be the set of full $\mu$-measure such that for any point $x\in \Lambda_{\mu}$ we have

\begin{equation}
\label{setlambda}
\displaystyle \frac{1}{n} \sum_{j=0}^{n-1} \delta_{f^j(p)} \xrightarrow[n\to + \infty]{} \mu \textrm{ and } \frac{1}{n} \sum_{j=0}^{n-1} \delta_{f^{-j}(p)} \xrightarrow[n \to +\infty]{} \mu \textrm{, in the $weak^*$-topology.}
\end{equation}
Where $\delta_p$ is the dirac mass on the point $p$. 

Recall that $\mathcal{R}$ is the set of regular points given by Oseledets theorem. Define the sets 
\[
\arraycolsep=1.2pt\def\arraystretch{2}
\begin{array}{rcl}
Z_{\mu}^- & = & \left\{ p\in \mathcal{R} \cap \Lambda_{\mu}: \forall n\geq 0 \textrm{ it holds } \displaystyle \left \Vert Df^n(p)|_{E^{-}_p}\right \Vert < \left(k^{-\frac{4}{5}} \right)^n\right \};\\
Z_{\mu}^+ & = & \left \{ p\in \mathcal{R} \cap \Lambda_{\mu}: \forall n\geq 0 \textrm{ it holds }\displaystyle\left \Vert Df^{-n}(p)|_{E^{+}_p}\right \Vert < \left( k^{-\frac{4}{5}} \right)^n\right \};\\
Z_{\mu} & = & f(Z_{\mu}^-) \cap f^{-1}(Z_{\mu}^+).
\end{array}
\]

\begin{remark}
\label{contaanterior}
By the definition of $Z_{\mu}$, $f^{-1}(Z_{\mu}) \subset Z_{\mu}^-$. Observe that for $p\in Z_{\mu}$ we have 
\[
1\leq  \left \Vert Df(f^{-1}(p))|_{E^-_{f^{-1}(p)}}\right \Vert . \left \Vert Df^{-1}(p)|_{E^-_p}\right \Vert \leq k^{-\frac{4}{5}}\left \Vert Df^{-1}(p)|_{E^-_p}\right \Vert 
\]
We conclude that $\left \Vert Df^{-1}(x)|_{E^-_x}\right \Vert \geq k^{\frac{4}{5}}$. Similarly $\left \Vert Df(x)|_{E^+_x}\right \Vert \geq k^{\frac{4}{5}}$.
\end{remark}

We will need the following version of the Pliss lemma.

\begin{lemma}[ \cite{ch1crovisierpujalsstronglydissipative}, Lemma $3.1$]
\label{pliss}
For any $\varepsilon>0$, $\alpha_1 < \alpha_2$ and any sequence $(a_i) \in (\alpha_1, +\infty)^{\N}$ satisfying
$$
\displaystyle \limsup_{n\to +\infty} \frac{a_0 + \cdots + a_{n-1}}{n} \leq \alpha_2,
$$
there exists a sequence of integers $0 < n_1 < n_2 < \cdots $ such that 
\begin{enumerate}
\item for any $l \geq 1$ and $n>n_l$, one has $\displaystyle \frac{a_{n_l} + \cdots + a_{n-1}}{(n-n_l)} \leq \alpha_2 + \varepsilon$;\\
\item the upper density $\displaystyle \limsup \frac{l}{n_l}$ is larger than $\displaystyle \frac{\varepsilon}{\alpha_2+ \varepsilon - \alpha_1}$.
\end{enumerate}
\end{lemma}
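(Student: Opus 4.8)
The plan is to reduce everything to the partial sums of the corrected sequence $b_i := \alpha_2 + \varepsilon - a_i$. Its two relevant features are: each $b_i < c := \alpha_2 + \varepsilon - \alpha_1$ (because $a_i > \alpha_1$), and the hypothesis $\limsup_n \frac1n(a_0+\cdots+a_{n-1}) \le \alpha_2$ becomes $\liminf_n \frac1n(b_0+\cdots+b_{n-1}) \ge \varepsilon > 0$. Setting $S_0 = 0$ and $S_n = b_0 + \cdots + b_{n-1}$, this forces $S_n \to +\infty$. After rearrangement, condition $(1)$ for a time $m$ is exactly the requirement that $S_n \ge S_m$ for every $n > m$. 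So I would introduce the set of good times $\mathcal{G} := \{ m \ge 1 : S_m \le S_n \text{ for all } n \ge m \}$ (the ``future minima'' of $S$); by construction every element of $\mathcal{G}$ satisfies $(1)$, since $S_n - S_{n_l} \ge 0$ rewrites as $a_{n_l} + \cdots + a_{n-1} \le (n-n_l)(\alpha_2 + \varepsilon)$.

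Next I would check that $\mathcal{G}$ is infinite: for any $N \ge 1$, since $S_n \to +\infty$ the infimum $\inf_{n \ge N} S_n$ is attained at some $m \ge N$, and such an $m$ lies in $\mathcal{G}$; hence $\mathcal{G}$ is unbounded. Enumerating $\mathcal{G} = \{ n_1 < n_2 < \cdots \}$ gives the desired sequence of integers, and $(1)$ is established.

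The core of the argument is the density bound $(2)$, for which I would first prove the telescoping estimate $S_{n_1} < c$ and $S_{n_{l+1}} - S_{n_l} < c$ for all $l \ge 1$. The key observation is that $S_{n_{l+1}} = \min\{ S_j : j > n_l \}$: a minimizer $m_0$ of $S$ over $\{j > n_l\}$ is itself a future minimum, hence $m_0 \in \mathcal{G}$ and $m_0 \ge n_{l+1}$, while $n_{l+1} > n_l$ being a future minimum forces $S_{n_{l+1}} = S_{m_0}$. Consequently $S_{n_{l+1}} \le S_{n_l + 1} = S_{n_l} + b_{n_l} < S_{n_l} + c$, and likewise $S_{n_1} = \min\{S_j : j \ge 1\} \le S_1 = b_0 < c$. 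Telescoping yields $S_{n_l} < l\,c$, i.e. $\frac{l}{n_l} > \frac{S_{n_l}}{c\,n_l}$. Passing to the limit along the subsequence $(n_l)$ and using $\liminf_n \frac{S_n}{n} \ge \varepsilon$, I obtain
\[
\limsup_{l} \frac{l}{n_l} \;\ge\; \frac{1}{c}\,\liminf_{l} \frac{S_{n_l}}{n_l} \;\ge\; \frac{1}{c}\,\liminf_{n} \frac{S_n}{n} \;\ge\; \frac{\varepsilon}{c} \;=\; \frac{\varepsilon}{\alpha_2 + \varepsilon - \alpha_1},
\]
which is $(2)$ (giving the stated density with ``$\ge$'', which is what is used afterwards).

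The one genuinely non-routine step is the telescoping estimate $S_{n_{l+1}} - S_{n_l} < c$ — in particular the identity $S_{n_{l+1}} = \min_{j > n_l} S_j$, which is what converts the single-step bound $b_i < c$ into a bound on the gaps between consecutive good times. Everything else (the passage to $b_i$, verifying $(1)$, and the infinitude of $\mathcal{G}$) is bookkeeping, and the final limit computation is immediate once the telescoping estimate is in hand.
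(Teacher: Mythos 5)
Your proof is correct and complete; note that the paper itself gives no argument for this lemma (it is quoted from Crovisier--Pujals, Lemma 3.1), and your route via the future minima of the corrected partial sums $S_n=\sum_{i<n}(\alpha_2+\varepsilon-a_i)$ is essentially the standard proof of such Pliss-type statements, matching the cited source. Your remark about obtaining the density bound with ``$\geq$'' rather than the stated strict inequality is the right reading: the weak inequality is exactly what is used later (e.g.\ in Lemma \ref{setzs}), so nothing is lost.
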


Using this lemma we prove the following.

\begin{lemma}
\label{setzs}
If $k$ is large enough and $\mu$ is an $f$-ergodic measure verifying \eqref{eq.measurelargeexponents}, then $\mu(Z_{\mu}) \geq \frac{1-7\delta}{1+7\delta}$.

\end{lemma}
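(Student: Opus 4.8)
The plan is to apply the Pliss lemma (Lemma \ref{pliss}) twice — once in forward time to control the stable behavior and once in backward time to control the unstable behavior — and then intersect the resulting full-density sets.

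First I would set up the forward estimate. For $\mu$-a.e. $p$, Oseledets' theorem gives $\lim_n \frac1n \log \|Df^n(p)|_{E^-_p}\| = \lambda^-(\mu,f) < -(1-\delta)\log k$. I would apply Lemma \ref{pliss} to the sequence $a_i = \log \|Df(f^i(p))|_{E^-_{f^i(p)}}\|$ (which is bounded below by $\log\|Df^{-1}\|^{-1} > -\log(4\pi k) =: \alpha_1$ by \eqref{eq.c1norm}), with $\alpha_2 = \lambda^-(\mu,f)$ and a suitable small $\varepsilon$ chosen so that $\alpha_2 + \varepsilon \le -\frac{4}{5}\log k$ — this is possible since $-(1-\delta)\log k$ with $\delta = \frac1{600}$ leaves room below $-\frac45 \log k$. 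Conclusion (1) of Pliss then gives infinitely many "hyperbolic times" $n_l$ at which $\|Df^{n}(f^{n_l}(p))|_{E^-}\| \le (k^{-4/5})^{n-n_l}$ for all $n > n_l$, i.e. $f^{n_l}(p) \in Z_\mu^-$; and conclusion (2) gives that the upper density of these times is at least $\frac{\varepsilon}{\alpha_2 + \varepsilon - \alpha_1}$. By Birkhoff's ergodic theorem applied to the indicator of $Z_\mu^-$, this upper density is $\mu(Z_\mu^-)$ for $\mu$-a.e. $p$ (using that $p \in \Lambda_\mu$), so $\mu(Z_\mu^-) \ge \frac{\varepsilon}{\alpha_2 + \varepsilon - \alpha_1}$. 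I would now estimate this ratio: with $\alpha_2 = \lambda^-(\mu,f) \le -(1-\delta)\log k$ and $\alpha_1 = -\log(4\pi k)$, and taking $\varepsilon$ as large as allowed (close to the gap $-\frac45\log k - \lambda^-(\mu,f)$, which is at least $(\frac15 - \delta)\log k$ minus a bit — actually I should be more careful: $-\frac45\log k - (-(1-\delta)\log k) = (\frac15 - \delta)\log k$, but $\lambda^-$ could be much more negative, giving even more room, while the bound on the density degrades; the worst case for the density bound needs checking), one gets $\mu(Z_\mu^-) \ge 1 - O(1/\log k) \cdot (\text{const})$, and for $k$ large this exceeds, say, $\frac{1}{1+3\delta}$. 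By the symmetry \eqref{eq.inversestandard} (or just running the same argument for $f^{-1}$ and $E^+$), the same bound holds for $\mu(Z_\mu^+)$.

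Then I would combine. Since $Z_\mu = f(Z_\mu^-) \cap f^{-1}(Z_\mu^+)$ and $\mu$ is $f$-invariant, $\mu(f(Z_\mu^-)) = \mu(Z_\mu^-)$ and $\mu(f^{-1}(Z_\mu^+)) = \mu(Z_\mu^+)$, so $\mu(Z_\mu) \ge \mu(Z_\mu^-) + \mu(Z_\mu^+) - 1 \ge \frac{2}{1+3\delta} - 1 = \frac{1 - 3\delta}{1+3\delta}$. Finally I would verify the elementary inequality $\frac{1-3\delta}{1+3\delta} \ge \frac{1-7\delta}{1+7\delta}$ for $\delta \in (0,1)$ (cross-multiplying, this is $(1-3\delta)(1+7\delta) \ge (1-7\delta)(1+3\delta)$, i.e. $1 + 4\delta - 21\delta^2 \ge 1 - 4\delta - 21\delta^2$, i.e. $8\delta \ge 0$), which closes the estimate. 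If the cruder bookkeeping above does not land exactly on $\frac{1-7\delta}{1+7\delta}$, I would simply track the constants more carefully — the point is that each of $\mu(Z_\mu^\pm)$ is at least $1 - c/\log k$ plus explicit lower-order terms, and for $k$ large this beats any fixed threshold strictly below $1$; the factor $7\delta$ is a safe margin.

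The main obstacle I anticipate is the bookkeeping in the density estimate: one must choose $\varepsilon$ in Lemma \ref{pliss} carefully so that \emph{both} the hyperbolic-time condition $\alpha_2 + \varepsilon \le -\frac45\log k$ holds \emph{and} the density lower bound $\frac{\varepsilon}{\alpha_2+\varepsilon-\alpha_1}$ is close enough to $1$. The tension is that larger $\varepsilon$ helps the density but may violate the first constraint, and when $\lambda^-(\mu,f)$ is very negative the constraint loosens but $\alpha_2 - \alpha_1$ in the denominator could grow; one needs that $-\log(4\pi k)$ is comparable to $-\frac45 \log k$ up to the right constants, which is exactly why the hypothesis \eqref{eq.measurelargeexponents} is phrased with the constant $1-\delta$ and the intermediate scale $k^{-4/5}$ sits strictly between $k^{-(1-\delta)}$ and $(4\pi k)^{-1}$. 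Making all of these inequalities simultaneously explicit for $\delta = \frac1{600}$ and $k$ large is the crux; everything else (Birkhoff, invariance, the final algebraic inequality) is routine.
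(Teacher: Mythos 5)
Your approach is the same as the paper's: apply Pliss (Lemma \ref{pliss}) to the one-dimensional stable cocycle (and symmetrically for the unstable one), convert the upper density of Pliss times into a lower bound for $\mu(Z_{\mu}^{\pm})$ via Birkhoff, and finish with a union bound over $Z_{\mu}=f(Z_{\mu}^-)\cap f^{-1}(Z_{\mu}^+)$. One quantitative claim in your write-up is wrong, though, and must be corrected before the argument closes: the Pliss density does \emph{not} tend to $1$ as $k\to\infty$; it is capped at roughly $1-5\delta$ uniformly in $k$. Indeed, the hyperbolic-time constraint $\alpha_2+\varepsilon\le-\tfrac45\log k$ forces $\varepsilon\le(\tfrac15-\delta)\log k$, while the denominator in the density bound is $\alpha_2+\varepsilon-\alpha_1=\varepsilon+\delta\log k+\log 4\pi$ (taking, as one should, $\alpha_2=-(1-\delta)\log k$ rather than $\lambda^-(\mu,f)$ itself, which also disposes of your worry about $\lambda^-$ being very negative), so the ratio is at most $\frac{(\frac15-\delta)\log k}{\frac15\log k+\log 4\pi}<1-5\delta$. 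Hence your intermediate threshold $\frac{1}{1+3\delta}\approx 1-3\delta$ is unattainable, and the assertion that $\mu(Z_{\mu}^{\pm})\ge 1-c/\log k$ ``beats any fixed threshold below $1$'' is false. The paper chooses $\varepsilon=\tfrac16\log k$ and, using $\tfrac{6\log 4\pi}{\log k}<\delta$, obtains $\mu(Z_{\mu}^{\pm})\ge\frac{1}{1+7\delta}$; the union bound then gives exactly $\mu(Z_{\mu})\ge 1-\frac{14\delta}{1+7\delta}=\frac{1-7\delta}{1+7\delta}$, which is why the lemma carries that constant. With this corrected intermediate bound, the rest of your argument goes through as written.
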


\begin{proof}
Let $k$ be sufficiently large such that \eqref{eq.c1norm} holds. We may also suppose that $k$ is large enough such that 
\begin{equation}
\label{eq.largek}
\displaystyle \frac{6 \log 4\pi}{\log k} < \delta.
\end{equation}
Let $\mu$ be an $f$-ergodic measure verifying \eqref{eq.measurelargeexponents}. For $p\in \mathcal{R} \cap \Lambda_{\mu}$ and since $E^-_p$ is one dimensional, we obtain
\[
\displaystyle \lim_{n\to +\infty} \frac{1}{n} \log \|Df^n(p)|_{E^{-}_p}\|= \lim_{n\to +\infty}\frac{1}{n} \sum_{j=0}^{n-1} \log \|Df(f^j(p))|_{E^-_{f^j(p)}}\| \leq -(1-\delta) \log k.
\]

Take $\displaystyle \varepsilon = \frac{1}{6}\log k$, $\alpha_1 = -\log k - \log 4\pi$, $\alpha_2 = -(1-\delta) \log k$ and consider the sequence $\left(\log \| Df(f^j(p))|_{E^-_{f^j(p)}}\|\right)_{j\in \N}$. Applying Lemma \ref{pliss} for those quantities we obtain a sequence of integers $(n_l)_{l\in \N}$ such that for every $l\in \N$ and $n> n_l$ 
\[
\displaystyle \frac{1}{n-n_l} \sum_{j=n_l}^{n-1} \log \|Df(f^j(p))|_{E^-_{f^j(p)}}\| \leq -(1-\delta) \log k + \frac{1}{6} \log k = \log k^{-\frac{5}{6}+ \delta} < \log k^{-\frac{4}{5}}.
\]

From this we conclude

\[
\|Df^n(f^{n_l}(p))|_{E^-_{f^{n_l}(p)}}\| < \left( k^{-\frac{4}{5}} \right)^n, \textrm{ } \forall n \geq 0.
\]

Thus for every $l\in \N$ we have $f^{n_l}(p) \in Z_{\mu}^-$. Since $p\in \Lambda_{\mu}$, by Birkhoff's theorem, the estimate \eqref{eq.largek} and the second point in Pliss lemma, we obtain the following estimate 

\[\arraycolsep=1.2pt\def\arraystretch{2}
\begin{array}{rcl}
\displaystyle \mu(Z_{\mu}^-) & \geq & \displaystyle \limsup_{l \to + \infty} \frac{l}{n_l} \\
&\geq &\displaystyle \frac{\varepsilon}{-(1-\delta) \log k + \varepsilon +\log k +\log 4\pi}\\
&= & \displaystyle\frac{1}{ (1+6\delta) + \frac{6\log 4\pi}{\log k}} \geq \frac{1}{1+7\delta}.
\end{array}
\]

Similarly, $\mu(Z_{\mu}^+) \geq \frac{1}{1+ 7\delta}$. This implies that
\[
\mu(\T^2-Z_{\mu}^*) \leq \frac{7\delta}{1+7\delta}, \textrm{ for $* = -,+ $}.
\]

From the definition of $Z_{\mu}$ we conclude that

\begin{equation*}
\mu(Z_{\mu}) = 1- \mu(\T^2-Z_{\mu}) \geq 1- \frac{14\delta}{1+7\delta} = \frac{1-7\delta}{1+7\delta}.\qedhere
\end{equation*} 

\end{proof} 
 
 Let $T=\left[ \frac{1+7\delta}{28\delta}\right]$, since $\delta = \frac{1}{600}$ we have that $T>20$. Define 
\begin{equation}
\label{X}
X_{\mu}=  \displaystyle \bigcap_{j=-T+1}^{T-1} f^j(Z_{\mu}).
\end{equation}

\begin{lemma}
\label{measure}
If $k$ and $\mu$ verify Lemma \ref{setzs}, then $ \mu (X_{\mu}) >0$.
\end{lemma}

\begin{proof}
Recall that $\mu(Z_{\mu})\geq \frac{1-7\delta}{1+7\delta} $, hence 
$$
\mu(\T^2-Z_{\mu}) \leq \frac{14\delta}{1+7\delta}.
$$
Therefore
$$
\begin{array}{rcl}
\mu(X_{\mu})  =  1- \mu(X_{\mu}^c)& \geq & 1- \displaystyle \sum_{j=-T+1}^{T-1} \mu(f^j(\T^2- Z_{\mu})) \\
&\geq & 1- \left(2\left[\frac{(1+7\delta)}{28\delta}\right]-2\right).\frac{14\delta}{1+7\delta} >0.
\end{array}
$$
\end{proof}

\subsection{Cone estimates}
\label{manysets}

Let $V\subset \R^2$ be a one dimensional vector subspace inside $\R^2$ and let $V^{\perp}$ be the one dimensional subspace perpendicular to $V$. For any vector $w\in \R^2$ we can write $w= w_V + w_{V^{\perp}}$, the decomposition of $w$ in $V$ and $V^{\perp}$ coordinates. For $\theta>0$ define 
\[
\C_{\theta}( V) = \{w\in \R^2: \theta \|w_V\| \geq \|w_{V^{\perp}}\|\},
\]
the cone inside $\R^2$ around $V$ of size $\theta$. For simplicity if $V = \R.(1,0)$ then we just write $\C^{hor}_{\theta}= \C_{\theta}(V)$ and  $\C_{\theta}^{ver} = \C_{\theta}(V^{\perp})$, we will call them the horizontal and vertical cones respectively.

Recall that $\theta_1 = k^{-\frac{2}{5}}$. 

\begin{lemma}
\label{cone1}
For $k$ large enough, and $\mu$ an $f_k$-ergodic measure verifying \eqref{eq.measurelargeexponents}, for every $ p\in Z_{\mu}$ we have that $E^+_p \subset \mathscr{C}^{hor}_{\theta_1^{-1}}$, with $\theta_1 = k^{-\frac{2}{5}}$. Furthermore, $\mathscr{C}_{\frac{\theta_1}{2}}(E^+_p) \subset \mathscr{C}^{hor}_{\frac{4}{\theta_1}}$. The same is valid for the $E^-_p$ direction and the vertical cone.
\end{lemma}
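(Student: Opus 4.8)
The plan is to extract quantitative information from the defining inequalities of $Z_\mu$ together with the crude $C^1$-bounds \eqref{eq.c1norm}. Fix $p \in Z_\mu$ and let $v \in E^+_p$ be a unit vector. By Remark \ref{contaanterior} we have $\|Df(p)|_{E^+_p}\| \geq k^{4/5}$, and iterating the $Z_\mu^+$-estimate gives $\|Df^{-n}(p)|_{E^+_p}\| < (k^{-4/5})^n$ for all $n \geq 0$, so the $E^+$ direction is strongly expanded in forward time and strongly contracted in backward time at a rate at least $k^{4/5}$ per step. First I would decompose $v = v_{\mathrm{hor}} + v_{\mathrm{ver}}$ and suppose, for contradiction, that $v \notin \C^{hor}_{\theta_1^{-1}}$, i.e. $\|v_{\mathrm{ver}}\| > \theta_1^{-1}\|v_{\mathrm{hor}}\| = k^{2/5}\|v_{\mathrm{hor}}\|$, so $v$ is ``steep''. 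The key elementary computation is that $Df^{-1}$ sends a steep vector to a much flatter one while expanding it: writing out $Df_k^{-1}(x,y) = \begin{pmatrix} 0 & 1 \\ -1 & 2\pi k\cos(2\pi x) + 2\end{pmatrix}$, a vector $(a,b)$ with $|b|$ dominant is sent to $(b, -a + (2\pi k\cos + 2)b)$, whose first coordinate $b$ is comparable to the whole vector and whose norm has grown by a factor of order $k$. Hence $\|Df^{-1}(p)v\| \gtrsim k\|v\|$, contradicting $\|Df^{-1}(p)|_{E^+_p}\| < k^{-4/5} < 1$ for large $k$. This forces $E^+_p \subset \C^{hor}_{\theta_1^{-1}}$.

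For the second assertion, I would take any $w \in \C_{\theta_1/2}(E^+_p)$, so $w = w' + u$ with $w' \in E^+_p$ and $\|u\| \leq \tfrac{\theta_1}{2}\|w'\|$ (up to the perpendicular/parallel bookkeeping relative to $E^+_p$). Since $E^+_p$ is already inside $\C^{hor}_{\theta_1^{-1}}$, the horizontal component of $w'$ satisfies $\|w'_{\mathrm{hor}}\| \geq (1+\theta_1^{-2})^{-1/2}\|w'\| \geq \tfrac{1}{2}\theta_1\|w'\|$ for large $k$ (since $\theta_1 \to 0$), while the added error $u$ has norm at most $\tfrac{\theta_1}{2}\|w'\|$; combining these, $\|w_{\mathrm{ver}}\| \leq \|w'_{\mathrm{ver}}\| + \|u\| \leq \theta_1^{-1}\|w'_{\mathrm{hor}}\| \cdot (\text{const}) + \tfrac{\theta_1}{2}\|w'\|$, and after bounding $\|w_{\mathrm{hor}}\|$ from below by $\|w'_{\mathrm{hor}}\| - \|u\|$ one gets $\|w_{\mathrm{ver}}\| \leq \tfrac{4}{\theta_1}\|w_{\mathrm{hor}}\|$ for $k$ large. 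This is a routine cone-inclusion estimate: a cone of fixed relative width $\theta_1/2$ around a line lying in $\C^{hor}_{\theta_1^{-1}}$ is contained in the slightly larger cone $\C^{hor}_{4/\theta_1}$, precisely because $\theta_1$ is small.

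The statement for $E^-_p$ and the vertical cone follows by the symmetry \eqref{eq.inversestandard}: conjugating by the involution $\mathcal{I}(x,y) = (y,x)$ exchanges $f$ with $f^{-1}$, swaps horizontal and vertical directions, and swaps the roles of $Z_\mu^+$ and $Z_\mu^-$, so the already-proven statement for $E^+$ transports verbatim.

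I expect the only mild obstacle to be the first step: one must make precise the claim that ``$Df^{-1}$ flattens steep vectors and expands them by a factor $\sim k$'', handling the region where $\cos(2\pi x)$ is small. In fact no case analysis is needed because for a steep input vector the output's first coordinate equals the (large) second coordinate of the input, so the output norm is automatically $\gtrsim \|v\| \gtrsim k^{2/5}\|v_{\mathrm{hor}}\|$, which already contradicts the required contraction $<k^{-4/5}$ regardless of $\cos(2\pi x)$; so the argument is robust and the bounds \eqref{eq.c1norm} suffice.
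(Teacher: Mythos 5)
Your strategy is essentially the paper's: steep vectors are excluded from $E^+_p$ by comparing the action of the derivative with the rate forced by the definition of $Z_\mu$, the second inclusion is elementary plane geometry, and the involution $\mathcal{I}$ transports everything to $E^-_p$. However, two quantitative steps do not close as written. First, the inequality $\|Df^{-n}(p)|_{E^+_p}\|<(k^{-4/5})^n$ is the \emph{defining} property of $Z_\mu^+$, and $Z_\mu=f(Z_\mu^-)\cap f^{-1}(Z_\mu^+)$ is not contained in $Z_\mu^+$: for $p\in Z_\mu$ you only know $f(p)\in Z_\mu^+$. So the ``required contraction'' $\|Df^{-1}(p)|_{E^+_p}\|<k^{-4/5}$ that your contradiction invokes is not directly available at $p$. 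It is recoverable (the $n=2$ estimate at $f(p)$ together with \eqref{eq.c1norm} gives $\|Df^{-1}(p)|_{E^+_p}\|\le 4\pi k\cdot k^{-8/5}=4\pi k^{-3/5}<1$, which still contradicts $\|Df^{-1}(p)v\|\gtrsim\|v\|$), but the cleaner route, and the one the paper takes, is to use the statement that \emph{is} available at $p$ by Remark \ref{contaanterior}, namely $\|Df(p)|_{E^+_p}\|\ge k^{4/5}$, and to check that a steep vector $(u,1)$ with $|u|\le k^{-2/5}$ satisfies $\|Df(p)(u,1)\|\le k^{-2/5}(2\pi k+2)+1+k^{-2/5}<k^{4/5}$. (Also, $Df_k^{-1}$ involves $\cos(2\pi y)$, not $\cos(2\pi x)$; harmless here since you never use that entry.)

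Second, in the cone inclusion your constants exactly cancel. From $E^+_p\subset\mathscr{C}^{hor}_{\theta_1^{-1}}$ the sharp bound is $\|w'_{\mathrm{hor}}\|\ge\theta_1(1+\theta_1^2)^{-1/2}\|w'\|$; if you round this down to $\tfrac{\theta_1}{2}\|w'\|$, then $\|w_{\mathrm{hor}}\|\ge\|w'_{\mathrm{hor}}\|-\|u\|\ge\tfrac{\theta_1}{2}\|w'\|-\tfrac{\theta_1}{2}\|w'\|=0$, which is vacuous and cannot yield any cone bound. Keeping the sharp constant gives $\|w_{\mathrm{hor}}\|\ge\theta_1\bigl(\tfrac12-O(\theta_1^2)\bigr)\|w'\|$ and $\|w_{\mathrm{ver}}\|\le(1+\tfrac{\theta_1}{2})\|w'\|$, hence a slope bound close to $\tfrac{2}{\theta_1}<\tfrac{4}{\theta_1}$; the paper's exact computation on the boundary direction $(1,\theta_1^{-1})$ gives $\tfrac{2+\theta_1^2}{\theta_1}$, consistent with this. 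So both halves of your argument are correct in spirit, but each needs its constant tightened before the proof actually delivers the stated conclusions.
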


\begin{proof}
From Remark \ref{contaanterior}, we know that $\|Df(p)|_{E^+_p}\| \geq k^{\frac{4}{5}}$, for $p=(x,y)\in Z_{\mu}$. Take a vector of the form $(u,1)$, with $|u| \leq k^{-\frac{2}{5}}$, then for $k$ large enough
	\[\arraycolsep=1.2pt\def\arraystretch{1.5}
	\begin{array}{rcl}
	\|Df(p).(u,1)\| &=& |u||2\pi k \cos (2\pi x) +2| + 1 + |u| \\
	&\leq & k^{-\frac{2}{5}}.k^{1 +\frac{1}{200}} + 1\leq k^{\frac{3}{5}+\frac{1}{200}}+1  \leq k^{\frac{3}{5}+\frac{1}{100}} <k^{\frac{4}{5}}.
	\end{array}
	\]
		
	Hence, if $p\in Z_{\mu}$ then $E^+_p \subset \mathscr{C}^{hor}_{\theta_1^{-1}}$.
	
We want to determine $\theta>0$ such that the cone $\C^{hor}_{\theta}$ contains the cone $\C_{\frac{\theta_1}{2}}(E^+_x)$. For this purpose we will consider a cone $\C_{\frac{\theta_1}{2}}(V)$, where the direction $V$ belongs to the boundary of the cone $\C^{hor}_{\theta_1^{-1}}$.

Suppose $V$ is generated by the unit vector $(u,\frac{u}{\theta_1})$, with $u>0$. Observe that $V^{\perp}$ is generated by $(-\frac{u}{\theta_1},u)$. One of the boundaries of the cone $\C^{hor}_{\theta}$ we are looking for is generated by the vector $\frac{\theta_1}{2}(-\frac{u}{\theta_1},u) + (u,\frac{u}{\theta_1})$.
	
	The size of the cone $\theta$ is given by 
	$$
	\theta = \frac{2.[u(\theta_1^2+2)]}{2u\theta_1} =\frac{\theta_1^2+2}{\theta_1} <\frac{4}{\theta_1}.
	$$
	
	Since the horizontal cones are symmetric with respect to the horizontal direction, we conclude that 
	\[
	\C_{\frac{\theta_1}{2}}(E^+_p) \subset\C^{hor}_{\theta} \subsetneq \C^{hor}_{\frac{4}{\theta_1}}.
	\]
	
	By \eqref{eq.inversestandard}, a similar argument holds for the stable direction but using vertical cones.
\end{proof}

We define some critical regions. For that, define $I_1=I_1(k)= (-2k^{-\frac{3}{10}},2 k^{-\frac{3}{10}})$, $I_2=I_2(k) = \frac{I_1}{2}$, write $C_1 = \{\frac{1}{4}+ I_1\} \cup \{\frac{3}{4} +I_1\}$ and $C_2 =\{\frac{1}{4}+I_2\} \cup \{\frac{3}{4} +I_2\}$. Consider the regions
\[\arraycolsep=1.2pt\def\arraystretch{1.5}
\begin{array}{rclclcl}
\displaystyle Crit_1 &=& \{ C_1 \times S^1  \} \cup \{ S^1 \times C_1 \} & \textrm{ and } & Crit_2 &=& \{ C_2 \times S^1   \} \cup \{ S^1 \times C_2  \}.
\end{array}
\]

Write $G_* = (Crit_*)^c$, for $*=1,2$ and observe that $G_1 \subset G_2$. Observe also that each $G_*$ has four connected components, $\{G_{*,j}\}_{j=1}^4$. Each $G_{*,j}$ is a square and we can choose the index $j$ such that $G_{1,j} \subset G_{2,j}$.

\begin{remark}
\label{remarkado}
  The distance between the boundaries of these two sets is  
$$
d(\partial G_{1,j}, \partial G_{2,j}) = k^{-\frac{3}{10}} > k^{-7}, \textrm{ for $1 \leq j \leq 4$.}
$$
\end{remark}

Recall that $\theta_2= k^{-\frac{3}{5}}$.

\begin{lemma}
\label{esqueci}
If $k$ is large enough then for any $f_k$-ergodic measure $\mu$ verifying \eqref{eq.measurelargeexponents}, we have
\begin{enumerate}
\item $Z_{\mu} \subset G_1 \subset G_2;$
\item If $p\in G_2$ then $Df(p).(\mathscr{C}^{hor}_{\frac{4}{\theta_1}}) \subset \mathscr{C}^{hor}_{\theta_2}$;
\item If $\gamma$ is a $C^1$-curve with length $l(\gamma)\geq k^{-\frac{3}{10}}$, such that $\gamma \subset G_2$ and it is tangent to $\mathscr{C}^{hor}_{\theta_2}$ then $l(f(\gamma)) >4$.
\end{enumerate}
Similar statements hold for the vertical cone and $f^{-1}$.
\end{lemma}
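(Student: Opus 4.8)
The plan is to treat the three items in order, since each builds on the scale bookkeeping of the previous one; throughout, $k$ is taken large enough that \eqref{eq.c1norm} and the cone estimate of Lemma \ref{cone1} hold. For item (1), the point is purely a matter of comparing two scales: a point $p=(x,y)\in Z_\mu$ satisfies $\|Df(p)|_{E^+_p}\|\ge k^{4/5}$ by Remark \ref{contaanterior}, and by Lemma \ref{cone1} the vector $E^+_p$ lies in $\mathscr{C}^{hor}_{\theta_1^{-1}}$, i.e.\ its slope is at most $\theta_1=k^{-2/5}$ in absolute value. Feeding such a vector $(u,1)$ with $|u|\le k^{-2/5}$ through $Df(p)$ and demanding that the norm be at least $k^{4/5}$ forces $|2\pi k\cos(2\pi x)+2|$ to be large, which in turn forces $x$ to stay away from $\tfrac14$ and $\tfrac34$ by at least something like $k^{-3/10}$ — precisely so that $x\notin C_1$. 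The symmetric argument using \eqref{eq.inversestandard} and the lower bound $\|Df^{-1}(p)|_{E^-_p}\|\ge k^{4/5}$ controls the $y$-coordinate. Hence $Z_\mu\subset G_1$, and $G_1\subset G_2$ is immediate from $I_2\subset I_1$.

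For item (2), I would take $p\in G_2$ and a unit vector $w=(u,1)/\|(u,1)\|$ with $|u|\le 4/\theta_1 = 4k^{2/5}$ (any vector in $\mathscr{C}^{hor}_{4/\theta_1}$ is a positive multiple of such a vector after possibly reflecting; writing it with second coordinate $1$ costs only a bounded factor). Since $p=(x,y)\in G_2$ the quantity $|2\pi k\cos(2\pi x)+2|$ is $\ge c k\cdot k^{-3/10}=c\,k^{7/10}$ for a universal constant $c$ (because $\cos$ has nonvanishing derivative near $\tfrac14,\tfrac34$ and $x$ is $k^{-3/10}/2$-away from those points). The image $Df(p).(u,1) = \big(u(2\pi k\cos 2\pi x+2)-1,\ u\big)$ then has first coordinate of size at least $\asymp k^{7/10}$ once $u\ne 0$, while its second coordinate is $|u|\le 4k^{2/5}$; the resulting slope is $\le 4k^{2/5}/(c\,k^{7/10}-1)\le k^{-1/5}\cdot(\text{const}) \le \theta_2=k^{-3/5}$ for large $k$ — here one must be a little careful with the degenerate case $u$ very small or zero, where the first coordinate could be as small as $1$, but then the second coordinate is correspondingly small ($|u|$ small) and the slope bound survives; I will split into the cases $|u|\ge k^{-1}$ and $|u|<k^{-1}$ to make this airtight. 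This gives $Df(p).\mathscr{C}^{hor}_{4/\theta_1}\subset \mathscr{C}^{hor}_{\theta_2}$.

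For item (3), let $\gamma\subset G_2$ be a $C^1$ curve with $l(\gamma)\ge k^{-3/10}$ tangent everywhere to $\mathscr{C}^{hor}_{\theta_2}$. Being tangent to a thin horizontal cone, $\gamma$ is a near-horizontal graph, so its horizontal extent (the length of the projection to the $x$-axis) is at least $l(\gamma)/\sqrt{1+\theta_2^2}\ge \tfrac12 k^{-3/10}$. Now estimate $l(f(\gamma))=\int_\gamma \|Df\cdot \dot\gamma\|$ from below: at every point of $\gamma$ the tangent is a vector in $\mathscr{C}^{hor}_{\theta_2}$, hence of the form $(u,v)$ with $|v|\le\theta_2|u|$, and $\|Df(p).(u,v)\| \ge |u(2\pi k\cos2\pi x+2)-v|\ge |u|\,(|2\pi k\cos2\pi x+2| - \theta_2)\ge |u|\cdot \tfrac12 c\,k^{7/10}$ on $G_2$. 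Integrating, $l(f(\gamma))\ge \tfrac12 c\,k^{7/10}\cdot(\text{horizontal extent of }\gamma)\ge \tfrac12 c\,k^{7/10}\cdot\tfrac12 k^{-3/10}=\tfrac{c}{4}k^{2/5}>4$ for $k$ large. Finally, the "similar statements for the vertical cone and $f^{-1}$" follow verbatim from \eqref{eq.inversestandard}: conjugating by the involution $\mathcal I(x,y)=(y,x)$ swaps $f$ with $f^{-1}$ and horizontal with vertical cones, so each of (1)–(3) transports directly.

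The main obstacle is item (2), specifically handling the near-degenerate direction $u\approx 0$ in $\mathscr{C}^{hor}_{4/\theta_1}$: the naive slope estimate uses that the first coordinate of $Df(p).(u,1)$ is large, but that is only true when $u$ is not too small, so the argument has to be organized as a genuine cone-invariance statement (second coordinate controlled by first) rather than as a pointwise lower bound on one coordinate — the case split $|u|\gtrless k^{-1}$ above is the device for this, and the constants $\theta_1,\theta_2$ and the width of $I_2$ have been chosen (in the paper) with exactly enough room for it to close.
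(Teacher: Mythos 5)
Your items (1) and (3), and the reduction of the vertical/backward statements to \eqref{eq.inversestandard}, follow the paper's route and are fine: for (1) the only input needed is $\|Df(p)|_{E^+_p}\|\ge k^{4/5}$ and $\|Df^{-1}(p)|_{E^-_p}\|\ge k^{4/5}$ from Remark \ref{contaanterior}, which forces $|\cos(2\pi x)|$ and $|\cos(2\pi y)|$ to exceed roughly $k^{-1/5}$, hence $(x,y)\notin Crit_1$ (the cone information from Lemma \ref{cone1} plays no role here, and note you misquote it: $E^+_p\in\mathscr{C}^{hor}_{\theta_1^{-1}}$ means slope at most $\theta_1^{-1}=k^{2/5}$, not $\theta_1$). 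Item (3) is the paper's estimate, just phrased as an integral over the horizontal extent instead of a pointwise expansion factor times the length.

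Item (2), however, contains a genuine error: you have the defining inequality of the horizontal cone backwards. By the paper's convention, $(u,v)\in\mathscr{C}^{hor}_{4/\theta_1}$ means $|v|\le \frac{4}{\theta_1}|u|$, so a vector of the form $(u,1)$ lies in this cone precisely when $|u|\ge \theta_1/4=\tfrac14 k^{-2/5}$ --- not when $|u|\le 4/\theta_1$. The set you analyze therefore contains nearly vertical vectors that are \emph{not} in $\mathscr{C}^{hor}_{4/\theta_1}$, and for those the conclusion is simply false: taking $u_0=(2\pi k\cos(2\pi x)+2)^{-1}$, which on $\partial G_2$ is of order $k^{-7/10}$ and hence larger than your case-split threshold $k^{-1}$, one gets $Df(p).(u_0,1)=(0,u_0)$, a vertical vector; so your claim that the first coordinate of the image is $\asymp k^{7/10}$ "once $u\ne0$" fails, and the split at $|u|\gtrless k^{-1}$ cannot rescue it. The "main obstacle" you describe is thus an artifact of the reversed inequality: once the cone is written correctly there is no degenerate direction at all, and a single uniform estimate closes the item, namely
\[
\bigl|(2\pi k\cos(2\pi x)+2)u-v\bigr|\;\ge\;|u|\Bigl(\pi k^{\frac{7}{10}}-2-\tfrac{4}{\theta_1}\Bigr),
\]
valid for every $(u,v)$ with $|v|\le\frac{4}{\theta_1}|u|$ and every $p\in G_2$; since the second coordinate of the image is $|u|$, multiplying by $\theta_2=k^{-3/5}$ gives $\theta_2|u|(\pi k^{1/10}-o(1))>|u|$, i.e.\ the image lies in $\mathscr{C}^{hor}_{\theta_2}$, with no case analysis.
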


\begin{proof}

1. If $p=(x,y)\notin G_1$ then for $k$ large enough, $|\cos ( 2\pi x)| < 4k^{-\frac{3}{10}}$, in particular
\[
\|Df(p)\| \leq 2\pi k|\cos (2\pi x)| + 4 < 8\pi k^{\frac{7}{10}} + 4 <k^{\frac{4}{5}}.
\]
A similar calculation implies that for $p\notin G_1$ we have 
\[
\|Df^{-1}(p)\| < k^{\frac{4}{5}}.
\]
Thus $Z_{\mu} \subset G_1 \subset G_2.$
\begin{enumerate}
\setcounter{enumi}{1}
\item For any $p=(x,y)\in G_2$, observe that  
\begin{equation}
\label{cosx}
|\cos (2\pi x)| \geq \frac{k^{-\frac{3}{10}}}{2}.
\end{equation}

Hence, for any vector $(u,v) \in \mathscr{C}^{hor}_{\frac{4}{\theta_1}}$ we have
\[
\theta_2 ( |2\pi k \cos(2\pi x) +2||u| - |v|) \geq \theta_2|u| \left( \pi.k^{\frac{7}{10}} - 2 - 4k^{\frac{3}{5}}\right)= |u|\left( \pi k^{\frac{1}{10}} - 2k^{-\frac{3}{5}} -4 \right) >|u|.
\]

\item For any $p=(x,y)\in G_2$, for $(u,v) \in \mathscr{C}^{hor}_{\theta_2}$ an unit vector, we must have
\[
\arraycolsep=1.2pt\def\arraystretch{1.5}
\begin{array}{rclcl}
\|Df(p).(u,v)\| & \geq & |2\pi \cos(2\pi x) + 2||u| - |v| & \geq & |u| ( |2\pi \cos(2\pi x) + 2| -\theta_2)\\
& \geq & \frac{\|(u,v)\|}{1+\theta_2} (|2\pi \cos(2\pi x) + 2| -\theta_2) & \geq &\frac{1}{2}( 2 \pi k|\cos x| -2 - \theta_2) \\
& \geq & \frac{\pi k^{\frac{7}{10}}}{2} - 1- \frac{\theta_2}{2} &> &  k^{\frac{1}{2}}.
\end{array}
\]
Hence,
\begin{equation*}
l(f(\gamma)) \geq k^{\frac{1}{2}}.k^{-\frac{3}{10}} = k^{\frac{2}{10}} >4. \qedhere
\end{equation*}
\end{enumerate}
\end{proof}

\begin{remark}
\label{remark.robust1}
Notice that the proofs of the Lemmas \ref{setzs}, \ref{measure}, \ref{cone1} and \ref{esqueci} above are actually $C^1$-robust. That is, if $k$ is large enough, and $g$ is sufficiently $C^1$-close to $f_k$, any $g$-ergodic measure $\mu$ verifying \eqref{eq.measurelargeexponents} verify the conclusion of these lemmas.
\end{remark}

\subsection{A lower bound on the size of the invariant manifolds}
\label{lowerbound}
The next proposition gives us the existence of stable and unstable manifolds with good estimates on its sizes and its tangent directions. The proof of this proposition follows the exact same steps as Theorem $5$ in \cite{ch1crovisierpujalsstronglydissipative}, with the adaptations needed in order to obtain the precise estimates we will use. This was also done previously in the partially hyperbolic setting by the author in \cite{obataergodicity}.

Theorem $5$ in \cite{ch1crovisierpujalsstronglydissipative} proves the existence of stable manifolds with uniform size and ``geometry" in the following scenario. Let $g:S \to S$ be a $C^2$-diffeomorphism of a compact surface and let $\sigma, \tilde{\sigma}, \rho, \tilde{\rho} \in (0,1)$ be constants such that 
\begin{equation}
\label{inequalityimp}
\frac{\tilde{\sigma} \tilde{\rho}}{\sigma \rho} > \sigma.
\end{equation}
Let $p\in S$  and let $E \subset T_pS$ be a direction such that for all $n\geq 0$
\[
\tilde{\sigma}^n \leq \|Dg^n(p)|_{E}\| \leq \sigma^n \textrm{ and } \tilde{\rho}^n \leq \frac{\|Dg^n(p)|_E\|^2}{|\det Dg^n(p)|} \leq \rho^n.
\]  
Thorem $5$ in \cite{ch1crovisierpujalsstronglydissipative} then gives the existence of a stable manifold on $p$ tangent to the direction $E$ whose size depends only on the constants $\sigma, \tilde{\sigma}, \rho, \tilde{\rho}, \|f\|_{C^2}$. 

We remark that inequality (\ref{inequalityimp}) is important in the construction. That is the main reason why in this paper we will work with hyperbolic measures of ``large'' exponents.

\begin{proposition}
\label{sizemnfld}
For $k$ large enough, for any $f$-ergodic measure $\mu$ verifying \eqref{eq.measurelargeexponents}, for each $p\in Z_{\mu}$, there are two $C^1$-curves $W^*(p)$ tangent to $E^*_p$ and with length bounded from below by $r_0 = k^{-7}$, for $*=-,+$. Those curves are $C^1$-stable and unstable manifolds for $f$, respectively. Moreover, $T_qW^+_{r_0}(p) \subset \mathscr{C}^{hor}_{\frac{4}{\theta_1}}$ and  $T_mW^-_{r_0}(p) \subset \mathscr{C}^{ver}_{\frac{4}{\theta_1}}$, for every $q\in W^+_{r_0}(p)$ and $m\in W^-_{r_0}(p)$.
\end{proposition}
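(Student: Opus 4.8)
\textbf{Proof proposal for Proposition \ref{sizemnfld}.}

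The plan is to verify that for each $p \in Z_\mu$ the hypotheses of Theorem $5$ in \cite{ch1crovisierpujalsstronglydissipative} are met with explicit constants, and then to track through the quantitative version of that construction to extract the lower bound $r_0 = k^{-7}$ on the size of the manifolds together with the cone control. By \eqref{eq.inversestandard} it suffices to treat the unstable manifold $W^+(p)$; the stable case follows by conjugating with the involution $\mathcal{I}$ and exchanging horizontal and vertical cones. First I would set $E = E^+_p$ and read off from the definition of $Z_\mu$ and Remark \ref{contaanterior} the one-sided bounds: for all $n \geq 0$, $\|Df^{-n}(p)|_{E^+_p}\| < (k^{-\frac{4}{5}})^n$, which in the notation of Theorem $5$ (applied to $g = f^{-1}$) gives an upper contraction rate $\sigma = k^{-\frac{4}{5}}$; and the lower bound $\tilde\sigma$ comes from \eqref{eq.c1norm}, namely $\|Df^{-n}(p)|_{E^+_p}\| \geq \|Df\|^{-n} > (4\pi k)^{-n}$, so $\tilde\sigma = (4\pi k)^{-1}$. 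Since the standard map is volume preserving, $|\det Df^{-n}(p)| = 1$, hence the Jacobian ratio $\frac{\|Df^{-n}(p)|_E\|^2}{|\det Df^{-n}(p)|} = \|Df^{-n}(p)|_E\|^2$ is squeezed between $\tilde\rho^n = \tilde\sigma^{2n}$ and $\rho^n = \sigma^{2n}$, so one takes $\rho = \sigma^2 = k^{-\frac{8}{5}}$ and $\tilde\rho = \tilde\sigma^2 = (4\pi k)^{-2}$.

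Next I would check the crucial inequality \eqref{inequalityimp}, $\frac{\tilde\sigma \tilde\rho}{\sigma\rho} > \sigma$. With the choices above, $\frac{\tilde\sigma\tilde\rho}{\sigma\rho} = \frac{\tilde\sigma^3}{\sigma^3} = \left(\frac{1}{4\pi k}\cdot k^{\frac{4}{5}}\right)^3 = \left(\frac{k^{-\frac{1}{5}}}{4\pi}\right)^3 = \frac{k^{-\frac{3}{5}}}{(4\pi)^3}$, and we need this to exceed $\sigma = k^{-\frac{4}{5}}$, i.e. $k^{\frac{1}{5}} > (4\pi)^3$, which holds for $k$ large. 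So Theorem $5$ applies and yields a $C^1$ stable manifold for $f^{-1}$ at $p$ tangent to $E^+_p$ — that is, an unstable manifold $W^+(p)$ for $f$ — whose size is bounded below by a quantity depending only on $\sigma, \tilde\sigma, \rho, \tilde\rho$ and $\|f\|_{C^2}$. By \eqref{eq.c1norm} we have $\|f\|_{C^2} = \|Df^2\| < 5\pi^2 k$ (and $\|Df\| < 4\pi k$), so all the relevant constants are explicit powers of $k$ up to universal multiplicative constants; substituting them into the size estimate from the proof of Theorem $5$ I expect to obtain a lower bound of the form $c\, k^{-a}$ for an explicit exponent $a$, and the claim is that $a \leq 7$ for $k$ large, giving length $\geq k^{-7}$. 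This bookkeeping — carefully carrying the dependence of the size on the constants through the graph-transform/fixed-point argument of \cite{ch1crovisierpujalsstronglydissipative} and checking that the resulting exponent is no worse than $7$ — is the main obstacle; it is exactly the kind of quantitative refinement the author says was already carried out in the partially hyperbolic setting in \cite{obataergodicity}, so I would follow those computations line by line.

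Finally, for the cone control $T_qW^+_{r_0}(p) \subset \mathscr{C}^{hor}_{\frac{4}{\theta_1}}$: the construction of Theorem $5$ produces $W^+(p)$ as a graph over $E^+_p$ of a function with small $C^1$-norm (controlled, again, by the same constants and by $\|f\|_{C^2}$ times the size of the manifold), so the tangent directions along $W^+_{r_0}(p)$ lie in an arbitrarily thin cone around $E^+_p$, in particular inside $\mathscr{C}_{\frac{\theta_1}{2}}(E^+_p)$ once $k$ is large. By Lemma \ref{cone1}, for $p \in Z_\mu$ we have $\mathscr{C}_{\frac{\theta_1}{2}}(E^+_p) \subset \mathscr{C}^{hor}_{\frac{4}{\theta_1}}$, which gives the desired inclusion. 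The stable statement $T_mW^-_{r_0}(p) \subset \mathscr{C}^{ver}_{\frac{4}{\theta_1}}$ follows identically via \eqref{eq.inversestandard} and the vertical-cone version of Lemma \ref{cone1}. One small point to be careful about: $W^-_{r_0}(p)$ needs $p \in Z_\mu$ to apply the stable version of the estimates, and indeed $Z_\mu \subset Z_\mu^- \cap$ (relevant sets) by construction, so the hypotheses are available at every $p \in Z_\mu$ for both $*=-$ and $*=+$.
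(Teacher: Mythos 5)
Your proposal follows exactly the paper's route: apply the quantitative version of Theorem $5$ of \cite{ch1crovisierpujalsstronglydissipative} with $\sigma = k^{-\frac{4}{5}}$, $\tilde\sigma = (4\pi k)^{-1}$, $\rho=\sigma^2$, $\tilde\rho=\tilde\sigma^2$ (your explicit check of \eqref{inequalityimp}, $k^{\frac{1}{5}}>(4\pi)^3$, is a nice addition the paper leaves implicit), and the bookkeeping you defer to is indeed what the paper carries out. Two points in your write-up need correction, and they are precisely where the constants $k^{-7}$ and $\frac{4}{\theta_1}$ come from. First, it is not true that $Z_\mu\subset Z_\mu^-$: by definition $Z_\mu=f(Z_\mu^-)\cap f^{-1}(Z_\mu^+)$, so the uniform one-sided estimates $\|Df^n(\cdot)|_{E^-}\|<(k^{-\frac{4}{5}})^n$ hold at $f^{-1}(p)$, not at $p$ (and the unstable ones at $f(p)$). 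The paper therefore builds the stable manifold at $f^{-1}(p)$ and pushes it forward by $f$; this transport costs a factor $\|Df\|^{-1}\geq(4\pi k)^{-1}$ in length, which is how the intrinsic bound $k^{-5}$ at $f^{-1}(p)$ becomes $k^{-7}$ at $p$. As written, your verification of the hypotheses of Theorem $5$ at the point $p$ itself does not go through.

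Second, the cone control is not free: the manifold produced by Theorem $5$ is tangent to a cone $\mathscr{C}_\eta$ around the invariant direction, and the guaranteed size $\tilde r_0$ is \emph{proportional to} $\eta$ (in the paper, $\tilde r_0=\frac{\eta}{1200\pi^2 k\lambda_1}$), so you cannot take the cone ``arbitrarily thin'' without destroying the length bound. Moreover, because of the shift above, the cone is obtained at $f^{-1}(p)$ and must be transported to $p$ by $Df$, which can widen it by a factor up to $\|Df\|\cdot\|Df^{-1}\|\leq 16\pi^2k^2$. One must therefore choose $\eta$ small enough that $16\pi^2k^2\eta\leq\frac{\theta_1}{2}$ (the paper takes $\eta=k^{-3}$) yet large enough that $\tilde r_0>k^{-5}$; it is this balance, not largeness of $k$ alone, that yields both $T W^{\pm}_{r_0}(p)\subset\mathscr{C}_{\frac{\theta_1}{2}}(E^{\pm}_p)\subset\mathscr{C}^{*}_{\frac{4}{\theta_1}}$ via Lemma \ref{cone1} and the exponent $7$. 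With these two adjustments your argument coincides with the paper's proof.
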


\begin{proof}
We use some of the notation of the proof of Theorem 5 in \cite{ch1crovisierpujalsstronglydissipative}. If $p\in Z_{\mu}$, since $Z_{\mu} = f(Z_{\mu}^-) \cap f^{-1}(Z_{\mu}^+)$ we have that $f^{-1}(p)\in Z^-_{\mu}$. For such point $p$ it holds that
\[
(4\pi k)^{-n}\leq \left \Vert Df^n(f^{-1}(p))|_{E^-_{f^{-1}(p)}}\right \Vert < \left(k^{-\frac{4}{5}}\right)^n, \textrm{ } \forall n\geq 0.
\]
Since $\left \vert \det Df(p)\right \vert = 1$ for every $p\in \T^2$, it also holds
\[
(4\pi k)^{-2n}\leq \frac{\left \Vert Df^n(f^{-1}(p))|_{E^-_{f^{-1}(p)}}\right \Vert ^2}{\left|\det Df^n(f^{-1}(p))\right|} <\left(k^{-2.\left(\frac{4}{5}\right)}\right)^n, \textrm{ } \forall n\geq 0.
\]

For each $n\in \N$ consider $\psi_n:V_{n} \to T_{f^{n}(p)}\T^2$ to be the lifted dynamics by the exponential map of $f$ along the orbit of $p$, that goes from a neighborhood $V_n$ of $0$ in $T_{f^{n-1}(p)}\T^2$ to a neighborhood of $0$ in $T_{f^{n}(p)}\T^2$. Since $f$ is a $C^2$-diffeomorphism, this implies that $\psi_n$ is a $C^2$-diffeomorphism into its image.

Take $\sigma=k^{-\frac{4}{5}}$, $\tilde{\sigma} = (4\pi k)^{-1}$, $\rho = \sigma^2$ and $\tilde{\rho} = \tilde{\sigma}^2$. Consider
$$
\lambda_1 = 2\sigma \textrm{ and } \lambda_2 = \frac{\tilde{\rho}}{2},
$$
and take
$$
C_0=3 > \displaystyle \sum_{k\geq 0 }\left(\frac{\sigma}{\lambda_1}\right)^k = 2 = \displaystyle \sum_{k\geq 0} \left(\frac{\lambda_2}{\tilde{\rho}}\right)^k.
$$

Let $E_n = E^-_{f^{n-1}(p)}$ and $F_n=E_n^{\perp}$ and use the basis $E_n \oplus F_n$. We define
$$
m_n = \left \Vert Df^n(f^{-1}(p))|_{E^-_{f^{-1}(p))}}\right \Vert \textrm{ and } M_n = \frac{|\det Df^n(f^{-1}(p)|}{m_n} = \frac{1}{m_n}.
$$ 
Using this notation it is also defined
\[\arraycolsep=1.2pt\def\arraystretch{2.5}
\begin{array}{c}
\displaystyle A_n= \sum_{k\geq 0} \lambda_1^{-k} m_{n+k}/m_n,\\
\displaystyle B_n = \sum_{k=0}^n \lambda_2^{k-n} \frac{M_k/M_n}{m_k/m_n}.
\end{array}
\]

The proof of theorem $5$ in \cite{ch1crovisierpujalsstronglydissipative} gives
\begin{equation}
\label{anbn}
A_n \leq C_0 \left(\frac{\lambda_1}{\tilde{\sigma}}\right)^n \textrm{ and } B_n \leq C_0 \left(\frac{\rho}{\lambda_2}\right)^n.
\end{equation}

Define the change of coordinates in $T_{f^{n-1}(p)}\T^2$ given by $\Delta_n = Diag(A_n, A_n B_n)$, where the map $\Delta_n$ is defined using the coordinates $E_n \oplus F_n$. Observe that $A_n$ and $B_n$ are larger or equal to $1$, in particular, $\|\Delta_n\| = A_n B_n$ and $\|\Delta_n^{-1}\|=A_n^{-1}<1$.

Write $h_n= \Delta_{n+1} \circ \psi_n \circ \Delta_n^{-1}$ and $H_n = \Delta_{n+1} \circ D\psi_n(0) \circ \Delta_n^{-1}$. We have
$$
H_n = \begin{pmatrix}
a & d\\
0 & c
\end{pmatrix}
\textrm{ and } 
H_n^{-1} = \begin{pmatrix}
\frac{1}{a} & -\frac{d}{ca}\\
0 & \frac{1}{c}
\end{pmatrix}.
$$

From the proof of Theorem $5$ in \cite{ch1crovisierpujalsstronglydissipative}, we obtain

\begin{alignat}{3}
(\|Df\|.\|Df^{-1}\|^2)^{-1} & \leq & |a| & <&& \lambda_1                  \label{uno}\\
|a|\lambda_2^{-1} & \leq & |c| & \leq && \lambda_1 \lambda_2^{-1} \|Df\|.\|Df^{-1}\| + \lambda_1 \|Df^{-1}\|^2\label{dos}\\
&&|d| &\leq && \|Df\|.\|Df^{-1}\||a|.\label{tres}
\end{alignat}

Using inequalities (\ref{dos}) and (\ref{tres}), we have 
$$
\left|\frac{d}{c}\right|\leq \frac{\|Df\|.\|Df^{-1}\| |a|}{|a| \lambda_2^{-1}} < \frac{(4\pi k)^2}{2.(4\pi k)^2} = \frac{1}{2}.
$$

Let us set $\xi= \frac{\tilde{\sigma} \lambda_2}{\lambda_1^2 \rho}$ and observe that for $k$ large enough $\xi>4$. For $\eta \leq \frac{1}{2}$ we will consider $\widetilde{\mathscr{C}}_{(\eta,n)}= \C_{\eta}(E_n)$ the cone of size $\eta$ around the direction $E_n$. If $(u,v)\in \widetilde{\mathscr{C}}_{(\eta,n+1)}$, using (\ref{uno}) and the estimate on $\left| \frac{d}{c}\right|$, we have
\[\arraycolsep=1.2pt\def\arraystretch{1.5}
\begin{array}{rclcl}
\|H_n^{-1}.(u,v)\| &\geq & \left|\frac{u}{a}\right|- \left|\frac{dv}{ca}\right|& \geq & \left|\frac{u}{a}\right| \left( 1-\left|\frac{d\eta}{c}\right| \right)  \\
& \geq & \frac{\|(u,v)\|}{(1+\eta) \lambda_1} \left( 1- \frac{\eta}{2}\right) & \geq & \frac{\|(u,v)\|}{\frac{3}{2} \lambda_1} . \frac{1}{2} .\frac{3}{2} = \frac{\|(u,v)\|}{2\lambda_1} > \frac{\|(u,v)\|}{\xi \lambda_1}.
\end{array}
\]

We conclude that the vectors of the cone $\widetilde{\mathscr{C}}_{(\eta,n+1)}$ are expanded by $\frac{1}{2 \lambda_1}$ by $H_n^{-1}$. Observe that if a linear map is $\frac{\eta}{6}$-close to $H_n^{-1}$ then the vectors inside $\widetilde{\C}_{\eta,n+1}$ are expanded by at least $(4\lambda_1)^{-1}> (\xi \lambda_1)^{-1}$. It is easy to see that $L \left(\widetilde{\mathscr{C}}_{(\eta, n+1)}\right) \subset \widetilde{\mathscr{C}}_{(\eta, n)}$ for any linear map $L$ which is $\frac{\eta}{6}$-close to $H_n^{-1}$.

Recall that 
\begin{equation}
\label{eq.c1c2}
\|Df\| < 4\pi k \textrm{ and } \|D^2f^{-1}\|< 5\pi^2 k.
\end{equation}
Since $\|\Delta_{n+1}^{-1}\|<1$, we obtain
\[
\|Dh_n^{-1}(0)- Dh_n^{-1}(y)\|\leq \|\Delta_n\|.\|\Delta_{n+1}^{-1}\|.\|D^2f^{-1}\|.\|\Delta_{n+1}^{-1}\|.\|y\| \leq 5\pi^2 k A_nB_n\|y\|.
\]
Using (\ref{anbn}), we have that $Dh^{-1}_n(y)$ is $\frac{\eta}{4|a|}$-close to $H_n^{-1}$ in a ball of radius
$$
\tilde{r}_{n+1} = \frac{\eta}{6 (5\pi^2 k) A_nB_n}> \frac{\eta}{30 \pi^2 k C_0^2  } \left(\frac{\tilde{\sigma}\lambda_2}{\lambda_1 \rho}\right)^{n}>\frac{\eta}{270 \pi^2 k}.(4\lambda_1)^{n}.
$$

Since $Dh_n^{-1}$ expands the vectors inside the cone $\widetilde{\C}_{\eta,n+1}$ by at least $(4\lambda_1)^{-1}> (\xi \lambda)^{-1}$, we can take 
\[
\tilde{r}_0 = \frac{\eta}{300 \pi^2 k}.\frac{1}{4 \lambda_1} =\frac{\eta}{1200 \pi^2 k \lambda_1}. 
\]

The proof of Theorem $5$ in \cite{ch1crovisierpujalsstronglydissipative} gives us a $C^1$-curve inside $T_{f^{-1}(p)}\T^2$ tangent to the cone $\widetilde{\C}_{\eta,0}$, of size $\tilde{r}_0$, which is a stable manifold for the sequence $(h_n)_{n\in \N}$. 

To obtain a stable manifold for the sequence $(\psi_n)_{n\in \N}$ we need to apply $\Delta_0$ to this curve. Recall that $\Delta_0 = Diag(A_0,A_0)$, in particular it preserves the size and direction of a cone. Thus, we obtain that $\Delta_0(\widetilde{\C}_{(\eta,0)})= \C_{\eta}(E^-_{f^{-1}(p)})$.

To obtain a stable manifold for $f$, instead of the sequence $(\psi_n)_{n\in \N}$, we must project this curve by the exponential map, this projection will be denoted by $W^-(f^{-1}(p))$. Since $\T^2$ is the flat torus, the derivative of the exponential map is the identity. We conclude that the stable manifold for $f$ at the point $f^{-1}(p)$ is tangent to $\C_{\eta}(E^-_{f^{-1}(p)})$.

Now we estimate the size of the cones in the proposition at the point $p$. So far, the only restriction we have is $\eta \leq \frac{1}{2}$. Since $\|Df^{-1}\|$ and $\|Df\|$ are bounded from above by $4\pi k$, 
$$
Df(f^{-1}(k)).\C_{\eta}(E^-_{f^{-1}(p)}) \subset \C_{16\pi^2 k^2 \eta}(E^-_p).
$$

Using the estimates from Lemma \ref{cone1}, we want $16 \pi^2 k^2 \eta \leq \frac{\theta_1}{2} = \left(2k^{\frac{2}{5}}\right)^{-1}$, therefore, the additional restriction we put now is $\eta < \left(32 \pi^2 k^{2+\frac{2}{5}}\right)^{-1}$. Since $k$ is large, we can take $\eta= k^{-3}$, for instance. By Lemma \ref{cone1}, we have $E^-_p \subset \C^{ver}_{\theta_1^{-1}}$ and  $\C_{16\pi^2 k^2\eta}(E^-_p) \subset \C^{ver}_{\frac{4}{\theta_1}}$. This proves the estimate on the cones of the proposition. 

With this restriction, now we estimate the size of the stable manifold at the point $p$. For $\eta= k^{-3}$ and since $\lambda_1 = 2k^{-\frac{4}{5}}$, we obtain for $k$ large enough, 
$$
\tilde{r}_0= \frac{\eta}{1200 \pi^2 k\lambda_1} = \frac{1}{2400 k^{4 - \frac{4}{5}}}>\frac{1}{k^5}.
$$

From this one can conclude that the stable manifold at the point $f^{-1}(p)$ has size bounded below by $k^{-5}$, this implies that at the point $p$ the stable manifold has size bounded by $(4\pi k)^{-1}.k^{-5} > k^{-7} =r_0$, which concludes the proof for $W^-_{r_0}(p)$. The proof for the unstable manifold is analogous.
\end{proof}

\begin{remark}
From item $1$ of Lemma \ref{esqueci} and Remark \ref{remarkado}, if $p\in Z_{\mu}$ then $W^*_{r_0}(p) \subset G_2$, for $*= -,+$.
\end{remark}

\begin{remark}
\label{remark.robust2}
Since the estimates \eqref{eq.c1c2} are $C^2$-open, the conclusion of Proposition \ref{prop.estimateprop1} is $C^2$-robust.
\end{remark}

\section{Homoclinic relation for measures with large exponents} \label{sec.homrelation}

Recall that we fixed $\delta = \frac{1}{600}$. The goal of this section is to prove the following theorem:
\begin{theorem}
\label{thm.hommeasures}
There exists $k_0\in \N$ such that for any $k\in [k_0,+\infty)$ the following holds true: if $\mu$ and $\nu$ are two $f_k$-ergodic probability measures that verify 
\[
\min\{\lambda^+(\mu,f), \lambda^+(\nu,f), - \lambda^-(\mu,f), - \lambda^-(\nu,f)\}> (1-\delta) \log k,
\]
then $\mu$ and $\nu$ are homoclinically related. Furthermore, for any $l\in \N$, let $\mu_1, \cdots, \mu_{j}$ and $\nu_1, \cdots, \nu_s$ be the ergodic decomposition of $(f^l,\mu)$ and $(f^l,\nu)$, respectively. Then, any two measures $m_1, m_2 \in \{\mu_1, \cdots, \mu_j, \nu_1, \cdots, \nu_s\}$ are homoclinically related for $f^l$ as well. 
\end{theorem}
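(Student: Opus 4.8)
\textbf{Proof proposal for Theorem \ref{thm.hommeasures}.}

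The plan is to combine Proposition \ref{prop.estimateprop1} with the geometric control on the critical regions from Lemma \ref{esqueci} and the expansion of horizontal/vertical curves, and then feed the result into the measured-homoclinic-class framework of Section \ref{subsec.pesintheory}. First I would apply Proposition \ref{prop.estimateprop1} to both $\mu$ and $\nu$: this gives sets $Z_\mu, Z_\nu$ of positive measure (indeed measure $\geq \frac{1-7\delta}{1+7\delta}$) such that every point $p$ in these sets carries local stable and unstable manifolds $W^-_{r_0}(p)$, $W^+_{r_0}(p)$ of length at least $r_0=k^{-7}$, with $W^-_{r_0}(p)$ transverse to the horizontal direction (tangent to $\mathscr C^{ver}_{4/\theta_1}$) and $W^+_{r_0}(p)$ transverse to the vertical direction (tangent to $\mathscr C^{hor}_{4/\theta_1}$). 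The key step, which I expect will be carried out in a separate lemma (the ``\texttt{bigmnfld}'' lemma referenced in the introduction), is to promote these local manifolds to \emph{global} curves: using the Pliss-type control, a point of $Z_\mu$ can be chosen (after intersecting finitely many iterates, as in $X_\mu$) so that its forward and backward orbit spends a long consecutive time in the hyperbolic region $G_2$ where $Df$ maps $\mathscr C^{hor}_{4/\theta_1}$ into $\mathscr C^{hor}_{\theta_2}$ and stretches horizontal curves by a definite factor $>4$ (items (2),(3) of Lemma \ref{esqueci}); iterating, the unstable manifold grows into a ``horizontal'' curve of length $>1$ wrapping around $\T^2$ in the $x$-direction, and symmetrically, via the involution \eqref{eq.inversestandard}, the stable manifold becomes a ``vertical'' curve of length $>1$ in the $y$-direction.

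Once we have, for (a positive measure set of points of) $\mu$, global stable curves that are essentially vertical and global unstable curves that are essentially horizontal, and likewise for $\nu$, a topological-degree / intermediate-value argument shows that any essentially-vertical curve crossing the torus must intersect any essentially-horizontal curve crossing the torus, and the intersection is transverse because the tangent cones $\mathscr C^{ver}_{\theta_2}$ and $\mathscr C^{hor}_{\theta_2}$ are disjoint for $k$ large. Applying this to $W^-$ of a $\mu$-typical point against $W^+$ of a $\nu$-typical point (and vice versa) gives, in the language of Definition \ref{def.homrelatedmeasures}, $\mu \preceq \nu$ and $\nu \preceq \mu$, i.e.\ $\mu \sim \nu$: this is the homoclinic relation. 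Taking $\mu=\nu$ in this argument also shows each such measure is homoclinically related to itself, which is needed for the last assertion.

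For the ``furthermore'' part, fix $l\in\N$ and let $\mu_1,\dots,\mu_j$ (resp.\ $\nu_1,\dots,\nu_s$) be the ergodic components of $(f^l,\mu)$ (resp.\ $(f^l,\nu)$); these are permuted cyclically by $f$, so each $\mu_i = f^{a_i}_*\mu_1$ for some $a_i$, and each $\mu_i$ has the same Lyapunov exponents (scaled by $l$) as $\mu$ for $f^l$, hence still satisfies a largeness hypothesis of the form \eqref{eq.measurelargeexponents} with $k$ replaced by $k^l$ — so Proposition \ref{prop.estimateprop1} and the global-manifold construction apply verbatim to $f^l$ (the hyperbolic region and cone estimates of Lemma \ref{esqueci} only improve under iteration). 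The same vertical-meets-horizontal argument then shows any two of $\mu_1,\dots,\mu_j,\nu_1,\dots,\nu_s$ are homoclinically related for $f^l$. The main obstacle I anticipate is the global-manifold step: one must quantify, via Lemma \ref{pliss} applied along both forward and backward orbits simultaneously, that a definite-measure set of points in $Z_\mu$ stays in $G_2$ for enough consecutive steps that the definite stretching factor of Lemma \ref{esqueci}(3) compounds past length $1$, all while keeping the curve tangent to the thin cone $\mathscr C^{hor}_{\theta_2}$ so that it does not fold; the bookkeeping of how the cone and length estimates interact with reentries into the critical region is the delicate part, and it is exactly where the sharp exponents $(1-\delta)\log k$ and the specific scales $\theta_1,\theta_2$ are used.
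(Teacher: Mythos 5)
Your argument for the main claim is essentially the paper's: positive measure of the sets $X_\mu$, $X_\nu$ (Lemmas \ref{setzs} and \ref{measure}), local manifolds of size $k^{-7}$ tangent to the good cones (Proposition \ref{sizemnfld}), growth of $f^{\pm n}(W^{\pm}_{r_0}(p))$ into curves of length $>4$ tangent to $\mathscr{C}^{hor}_{\theta_2}$, $\mathscr{C}^{ver}_{\theta_2}$ for $n\geq 15$ (Lemma \ref{bigmnfld}), and then the crossing/transversality argument between long thin horizontal and vertical curves. That part is correct and matches the paper.

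The one place you deviate is the \emph{furthermore} part, and your justification there does not work as stated. You propose that each ergodic component $\mu_i$ of $(f^l,\mu)$ satisfies the largeness hypothesis ``with $k$ replaced by $k^l$'' and that Proposition \ref{prop.estimateprop1} and Lemma \ref{esqueci} then ``apply verbatim to $f^l$.'' But $f^l$ is not a standard map with parameter $k^l$: all the estimates of Section \ref{intmnfld} (the critical regions $Crit_1, Crit_2$, the cone invariance, inequality \eqref{eq.c1norm}) are computed from the explicit matrix $Df_k(x,y)$, and an orbit segment of length $l$ can re-enter the critical region, so the claim that ``the hyperbolic region and cone estimates only improve under iteration'' is unjustified. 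The paper avoids this entirely with a one-line observation: Pesin stable and unstable manifolds for $f$ coincide with those for $f^l$, and Lemma \ref{bigmnfld} holds for every $n\geq 15$; hence the transverse intersections $f^n(W^+_{r_0}(p))\pitchfork f^{-n}(W^-_{r_0}(q))\neq\emptyset$ already produced for $f$ (taken at points of $X=X_\mu\cup X_\nu$, which is charged by every ergodic component $m_i$) directly give the homoclinic relation for $f^l$. You should replace your rescaling argument by this observation; otherwise the furthermore part is not proved.
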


Let $k$ be large enough such that $f_k$ verifies the results in Section \ref{intmnfld}. Let $\mu$ be any $f_k$-ergodic measure that verifies the hypothesis of Theorem \ref{thm.hommeasures}. Recall that in \eqref{X} we defined 
\[
X_{\mu} = \bigcup_{j = -T + 1}^{T-1} f^j(Z_{\mu}), \textrm{ where $T = \left[\frac{1+7\delta}{28\delta}\right].$}
\]
 Recall also that  $\theta_2 = k^{-\frac{3}{5}}$ and that we defined in Section \ref{manysets} the sets $G_1$ and $G_2$.

\begin{lemma}
\label{bigmnfld}
For $k$ large enough the following holds: for any $\mu$ ergodic measure verifying \eqref{eq.measurelargeexponents} and any $n\geq 15$, for every $p\in X_{\mu}$ there are two curves $\gamma^-_{-n}(p) \subset f^{-n}(W^-_{r_0}(p))$ and $\gamma^+_n(p) \subset f^n(W^+_{r_0}(p))$ with length greater than $4$. The tangent spaces of each of those curves are contained in the cone $\C^{ver}_{\theta_2}$ and $\C^{hor}_{\theta_2}$, respectively.
\end{lemma}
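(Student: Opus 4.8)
The plan is to iterate the curves $W^\pm_{r_0}(p)$ forward (resp. backward) and track when they grow past length $4$, using the combinatorics of how often the orbit of $p$ visits the good region $G_2$ together with the cone-invariance and expansion estimates of Lemma \ref{esqueci}. First I would set up the right bookkeeping: since $p \in X_\mu = \bigcap_{j=-T+1}^{T-1} f^j(Z_\mu)$, we know $f^j(p) \in Z_\mu \subset G_1 \subset G_2$ for all $|j| \le T-1$, and $T > 20$. By Proposition \ref{sizemnfld} (and the remark following it) the unstable curve $W^+_{r_0}(p)$ has length $\ge r_0 = k^{-7}$, is tangent to $\mathscr{C}^{hor}_{4/\theta_1}$, and lies in $G_2$. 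The key point is that by item 1 of Lemma \ref{esqueci} combined with Remark \ref{remarkado}, the whole curve $W^+_{r_0}(p)$ stays comfortably inside the connected component $G_{2,j}$ containing $p$ (since $Z_\mu \subset G_1$ and $d(\partial G_{1,j}, \partial G_{2,j}) = k^{-3/10} > k^{-7} = r_0$); similarly, as long as the iterated curve remains short relative to $k^{-3/10}$ and its basepoint $f^m(p)$ is in $Z_\mu$, the entire iterate $f^m(W^+_{r_0}(p))$ stays inside a single component of $G_2$.

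Next I would run the iteration. Applying $Df(f^m(p))$ while $f^m(p) \in G_2$: by item 2 of Lemma \ref{esqueci}, the tangent cone $\mathscr{C}^{hor}_{4/\theta_1}$ is mapped into $\mathscr{C}^{hor}_{\theta_2} \subset \mathscr{C}^{hor}_{4/\theta_1}$, so the cone condition is preserved under further forward iterates as long as we stay in $G_2$; and the length-expansion computation in the proof of item 3 shows that on $G_2$, unit vectors in $\mathscr{C}^{hor}_{\theta_2}$ are expanded by at least $k^{1/2}$ (more precisely the computation gives $\|Df(p).(u,v)\| > k^{1/2}$ for unit $(u,v) \in \mathscr{C}^{hor}_{\theta_2}$). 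Hence each time the basepoint is in $G_2$, the length of the curve is multiplied by at least $k^{1/2}$, provided the curve has not yet escaped the component $G_{2,j}$. So after a bounded number of good steps — since $r_0 = k^{-7}$ and we need to reach length $4$, we need roughly $\lceil (7\log k + \log 4)/(\tfrac12 \log k)\rceil \le 15$ iterates for $k$ large — the curve will either already have length $> 4$, or will have grown to length $\ge k^{-3/10}$ (the threshold in item 3) while still inside $G_2$, at which point one more application of item 3 of Lemma \ref{esqueci} gives a subcurve of length $> 4$. This is why the hypothesis $n \ge 15$ appears, and why $T > 20$ suffices: all the intermediate basepoints $f^m(p)$ with $0 \le m \le 15$ lie in $Z_\mu \subset G_2$ because $p \in X_\mu$. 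Taking $\gamma^+_n(p)$ to be a connected subcurve of $f^n(W^+_{r_0}(p))$ of length exactly slightly more than $4$, tangent to $\mathscr{C}^{hor}_{\theta_2}$, gives the claim; the stable side follows by the symmetry $f_k^{-1} = \mathcal{I} \circ f_k \circ \mathcal{I}$ of \eqref{eq.inversestandard}, interchanging horizontal and vertical cones.

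The main obstacle I anticipate is the careful geometric argument that the iterated curve does not leave the connected component $G_{2,j}$ before it has had a chance to grow — i.e. controlling that "the curve stays in $G_2$" hypothesis needed to apply items 2 and 3 of Lemma \ref{esqueci} at every intermediate step. One has to argue that a short curve through $f^m(p) \in Z_\mu \subset G_1$ of length below $k^{-3/10}$ cannot reach $\partial G_{2,j}$ (that is exactly Remark \ref{remarkado}), and then bootstrap: once the curve reaches length comparable to $k^{-3/10}$, it may start to exit $G_2$, but by then item 3 applies directly to a subcurve of length $k^{-3/10}$ lying in $G_2$ and produces length $> 4$ in one step. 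Making the counting tight enough that $n \ge 15$ iterates suffice — i.e. checking $k^{15/2} \cdot k^{-7} > 4$ and that we never need a basepoint outside $\{f^j(p) : |j| \le T-1\}$ — is the routine but delicate part; everything else is a direct application of the lemmas from Section \ref{intmnfld}.
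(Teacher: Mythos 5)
Your proposal correctly reproduces the first half of the paper's argument: the initial curve $W^+_{r_0}(p)$ lies in $G_2$ and is tangent to $\C^{hor}_{4/\theta_1}$ by Proposition \ref{sizemnfld} and Remark \ref{remarkado}; item 2 of Lemma \ref{esqueci} puts the tangent of the first iterate into $\C^{hor}_{\theta_2}$; expansion by $k^{1/2}$ on $G_2$ forces the curve to reach $\partial G_2$ within at most $15$ iterates (this is exactly the paper's bound $m_0^+\le 15$, and is why $T>20$ suffices); and since the basepoint stays in $Z_\mu\subset G_1$, the component of the curve in $G_2$ contains a subarc of length $\ge k^{-3/10}$ joining $\partial G_1$ to $\partial G_2$, to which item 3 applies and yields length $>4$ after one more iterate. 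The reduction of the stable case to the unstable one via \eqref{eq.inversestandard} is also as in the paper.

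However, there is a genuine gap: the lemma asserts the existence of $\gamma^+_n(p)$ for \emph{every} $n\ge 15$, and your argument only produces a curve of length $>4$ at the first time $n_0=m_0^++1\le 16$ at which the length exceeds $4$. For $n>n_0$ you cannot simply ``take a connected subcurve of $f^n(W^+_{r_0}(p))$ of length slightly more than $4$ tangent to $\C^{hor}_{\theta_2}$'': once the curve is long it necessarily meets the critical strips $Crit_2$, where neither the cone invariance nor the expansion of Lemma \ref{esqueci} holds, so the image of the whole long curve may fold, lose the cone condition, or contract on large portions. The paper closes this with an inductive selection step: a curve of length $>4$ tangent to $\C^{hor}_{\theta_2}$ projects onto all of $S^1$ in the $x$-coordinate, hence crosses a full component of the auxiliary region $\widetilde{G}=\{k^{-3/10}\le |x-\tfrac14|\le 2k^{-3/10} \text{ or } k^{-3/10}\le |x-\tfrac34|\le 2k^{-3/10}\}$, on which the derivative preserves $\C^{hor}_{\theta_2}$ and expands its vectors by $k^{1/2}$; the crossing subarc has length $\ge k^{-3/10}$, so its image again has length $>4$ and is tangent to the cone, and one defines $\gamma^+_{n+1}(p)$ as that image. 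Without this (or an equivalent) re-selection mechanism, the ``for all $n\ge 15$'' part of the statement — which is essential in the proof of Theorem \ref{thm.hommeasures}, where the claim is applied to all iterates and to powers $f^l$ — is not established.
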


\begin{proof}

If $p\in X_{\mu}$ then by the definition of $X_{\mu}$ and Lemma \ref{esqueci},
\[
\{f^{-T+1}(p), \cdots , f^{T-1}(p)\}\subset Z_{\mu} \subset G_1\subset G_2, \textrm{ where $T=\left[ \frac{1+7\delta}{28\delta}\right]>20$.}
\]

 Define $W^+_k(p) = f^k(W^+_{r_0}(p))$ and observe that for every $q\in W^+_k(p)$, if $q\in G_2$ and $T_qW^+_k(p)\subset \C^{hor}_{\theta_2}$ then $T_{f(q)}W^+_{k+1}(p) \subset \C^{hor}_{\theta_2}$.

By Proposition \ref{sizemnfld}, $TW^+_0(p) \subset \C^{hor}_{\frac{4}{\theta_1}}$. Since $p\in Z_{\mu} \subset G_1$, by Remark \ref{remarkado} we conclude that $W^+_0(p) \subset G_2$. Item $2$ of Lemma \ref{esqueci} implies that $TW_1^+(p) \subset \C^{hor}_{\theta_2}$. 
 
If $p\in G_2$ and $(u,v) \in \C^{hor}_{\theta_2}$ is an unit vector, then $\|Df(p).(u,v)\|> k^{\frac{1}{2}}$. For a $C^1$-curve $\gamma$ containing $p$ with length $k^{-7}$, such that $ \gamma \subset G_2$ and $T\gamma \subset \C^{hor}_{\theta_2}$, let $m\in \N$ be the largest number such that $f^j(\gamma) \subset G_2$, for every $j=1, \cdots ,m$. Since the vectors inside $\C^{hor}_{\theta_2}$ are expanded by at least $k^{\frac{1}{2}}$ and the cone $\C^{hor}_{\theta_2}$ is preserved by the derivative of the points in $G_2$, we conclude that $m\leq 14$.   

Let $m^+_0 \in \N$ be the smallest number such that $W^+_{k^+_0}(p) \cap \partial G_2 \neq \emptyset$. Recall that if $q\in G_2$ and $(u,v) \in \C^{hor}_{\frac{4}{\theta_1}}$ is a unit vector, then by (\ref{cosx}), $\|Df(q).(u,v)\| >1$. Since $r_0 = k^{-7}$, we obtain that the curve $W_1^+(p) \subset \C^{hor}_{\theta_2}$ has length at least $k^{-7}$ and is tangent to $\C^{hor}_{\theta_2}$, by the previous paragraph $m^+_0 \leq 15$.

If $p\in X_{\mu}$, the connected component of $W^+_{m^+_0}(p)\cap G_2$ containing $f^{m^+_0}(p)$, which we will denote by $\widehat{W}^+_{m^+_0}(p)$, intersects the boundary of $G_2$ and $T\widehat{W}^+_{m^+_0}(p) \subset \C^{hor}_{\theta_2}$. Since $m_0^+ < T$, we know that $f^{m^+_0}(p) \in Z_{\mu} \subset G_1 \subset G_2$. We conclude that $\widehat{W}^+_{m^+_0}(p)$ also intersects the boundary of $G_1$.

 Let $\gamma^+_{m^+_0}$ be a connected component of $\widehat{W}^+_{m^+_0}(p) \cap (G_2-G_1)$, such that $\gamma^+_{m^+_0} \cap \partial G_1 \neq \emptyset$ and $\gamma^+_{m^+_0}\cap \partial G_2 \neq \emptyset$. The curve $\gamma^+_{m^+_0}$ is a $C^1$-curve that verifies the hypothesis of item $3$ from Lemma \ref{esqueci}. Thus $l(f(\gamma^+_{m^+_0})) >4$, $Tf(\gamma^+_{m^+_0}) \subset \C^{hor}_{\theta_2}$ and by definition $f(\gamma^+_{m^+_0}) \subset W^+_{m^+_0+1}(p)$. Define $\gamma_{m_0^+ + 1}(p) = f(\gamma^+_{m_0^+})$.
 

Let 
\[\widetilde{G} = \left\{(x,y) \in \T^4: k^{-\frac{3}{10}} \leq |x - \frac{1}{4}| \leq 2k^{-\frac{3}{10}} \textrm{ or } k^{-\frac{3}{10}} \leq |x-\frac{3}{4}| \leq 2k^{-\frac{3}{10}}\right\}.
\] 
It is easy to see that $\widetilde{G}$ has four connected components, each connected component having two boundaries. Since the critical region only depends on the coordinate $x$, for any point $q \in \widetilde{G}$, the derivative $Df(q)$ expands any vector inside $\C^{hor}_{\theta_2}$ by at least $k^{\frac{1}{2}}$.
 
We build $\gamma_n^+ \subset f(\gamma_{n-1}^+)$ inductively for $n> m_0^+ + 1$. Let us build it for $n= m_0^++2$. Observe that $P_1(\gamma_{m^+_0+1}^+)= S^1$, where $P_1$ is the projection on the first coordinate of the torus with coordinates $(x,y)$. Consider then $\widetilde \gamma^+_{m_0^++1}$ to be a connected component of $\gamma_{m_0^++1}^+(p) \cap \widetilde{G}$ that intersects the two boundaries of a connected component of $\widetilde{G}$. Define $\gamma^+_{m_0^++2}(p) = f(\tilde{\gamma}^+_{m_0^++1})$, observe that $l(\gamma^+_{m_0^++2}(p)) >4$ and $Tf(\gamma_{m_0^++2}(p)) \subset \C^{hor}_{\theta_2}$. In this way we can build inductively the curves $\gamma_n^+(p)$ that verify the conclusions of the lemma. In a similar way, we construct the curves $\gamma^-_{-n}(p)$. Since $m_0^+ \leq 15$ and $m_0^- \leq 15$, then this certainly holds for $n> 15$.
\end{proof}

\begin{proof}[Proof of Theorem \ref{thm.hommeasures}]

The proof of Theorem \ref{thm.hommeasures} will follow from Lemma \ref{bigmnfld}. Let $\mu$ and $\nu$ be as in the hypothesis of Theorem \ref{thm.hommeasures}. Let $X_{\mu}$ and $X_{\nu}$ be the sets defined above.

By Lemma \ref{measure}, $\mu(X_{\mu}),\nu(X_{\nu})>0$. Fix $p_{\mu}\in X_{\mu}$ and $p_{\nu} \in X_{\nu}$. If $n>15$, then by Lemma \ref{bigmnfld} we conclude that $f^n(W^+_{r_0}(p_{\mu})) \pitchfork f^{-n}(W^-_{r_0}(p_{\nu})) \neq \emptyset$. Observe that by the same reason we have that $f^n(W^+_{r_0}(p_{\nu})) \pitchfork f^{-n}(W^-_{r_0}(p_{\mu})) \neq \emptyset$. Hence, the measures $\mu$ and $\nu$ are homoclinically related. 

Fix $l\in \N$. Let $\mu_1, \cdots, \mu_j$ and $\nu_1, \cdots, \nu_s$ be the ergodic decomposition of $(f^l,\mu)$ and $(f^l,\nu)$. Write $X = X_{\mu} \cup X_{\nu}$, and observe that for any two measures $m_1, m_2 \in \{\mu_1, \cdots, \mu_j, \nu_1, \cdots, \nu_s\}$ we have that $m_i(X)>0$, for $i=1$ or $2$.

Since Lemma \ref{bigmnfld} holds for every $n$, and since Pesin's stable and unstable manifolds for $f$ coincides with the manifolds for $f^l$, from the argument above one may also conclude that $m_1$ is homoclinically related to $m_2$ for $f^l$. 
\end{proof}

\section{Proof of Theorem \ref{thm.maintheoremuniqueness}} \label{sec.uniqueness}

Let us first prove the Theorem \ref{thm.maintheoremuniqueness} for the standard map, then we explain why this proof is $C^2$-robust. Recall that we fixed $\delta = \frac{1}{600}$.

\subsection*{Uniqueness of the m.m.e. for the standard map}

Let $k_0\in \R$ be large enough such that for any $k\geq k_0$ the diffeomorphism $f_k: \T^2 \to \T^2$ verifies Theorem \ref{thm.hommeasures}. Furthermore, by Corollary \ref{cor.entropyduarte}, we may also assume that $k_0$ is large enough such that any for any $k\geq k_0$ we have that $h_{top}(f_k) > (1-\delta) \log k$. 

Since $f_k$ is $C^{\infty}$ by Newhouse's result \cite{newhouse}, there is at least one ergodic m.m.e. $\mu$ for $f_k$. Define $\mathbb{P}_{\mathrm{high}}(f_k)$ to be the set of ergodic measures with entropy larger than $(1-\delta)\log k$.

By Ruelle's inequality (see for instance Theorem $5.4.1$ in \cite{ch1barreirapesinbook}), if $\mu$ is an ergodic measure then
\[
h_{\mu}(f_k)\leq \min\{\lambda^+(\mu,f_k), -\lambda^-(\mu,f_k)\}.
\]
If the measure $\mu$ belongs to $\mathbb{P}_{\mathrm{high}}(f_k)$, we conclude that 
\[
\min \{\lambda^+(\mu,f_k), -\lambda^-(\mu,f_k)\}>(1-\delta)\log k.
\]

 By Theorem \ref{thm.hommeasures}, we have that any measure $\nu \in \mathbb{P}_{\mathrm{high}}(f_k)$ is homoclinically related with $\mu$. In particular, the measured homoclinic class $\mathcal{H}(\mu)$ contains the set $\mathbb{P}_{\mathrm{high}}(f_k)$.

Let $\nu$ be a m.m.e. for $f_k$, by item $1$ in Theorem \ref{thm.criteriabcs} we have that $\nu= \mu$. Now let us prove that $\mu$ is Bernoulli. If it was not the case, by item $2$ of Theorem \ref{thm.criteriabcs}, there would exist a natural number $l>1$ and $l$ different probability measures $\mu_1, \cdots , \mu_l$ such that $ \mu = \frac{1}{l} (\mu_1+ \cdots + \mu_l),$ and the measures $\mu_j$ verify $(f_k)_*(\mu_j) = \mu_{j+1}$, and $(f^l_k, \mu_j)$ is Bernoulli, for $j=1, \cdots, l$. 

Since $\mu$ is a m.m.e. for $f_k$, each measure $\mu_j$ would be a m.m.e. for $f_k^l$. By Theorem \ref{thm.hommeasures}, for any $i,j \in \{1, \cdots, l\}$ the measures $\mu_i$ and $\mu_j$ are homoclinically related. Applying item $1$ of Theorem \ref{thm.criteriabcs}, we would have $\mu_i = \mu_j$. This is a contradiction, since the measures $\mu_1, \cdots, \mu_l$ were different. Hence, $l=1$ and the measure $\mu$ is Bernoulli for $f_k$.

\begin{remark}
\label{remark.homomeasures}
By Theorem \ref{thm.hommeasures}, Definition \ref{def.tophomclass} and item $2$ of Theorem \ref{thm.criteriabcs}, we actually obtain that for $k$ large enough there exists a homoclinic class $\mathrm{HC}(\mathcal{O})$ which contains the support of any measure $\nu \in \mathbb{P}_{\mathrm{high}}(f_k)$. In other words, there exists one homoclinic class that ``captures'' any measure with high enough entropy.
\end{remark}

\subsection*{$C^2$-robustness of the uniqueness of the m.m.e. for the standard map}

First, observe that the conclusions of Lemma \ref{esqueci} and Proposition \ref{sizemnfld} are $C^2$-robust, as explained in Remarks \ref{remark.robust1} and \ref{remark.robust2}. In particular, we may follow the same steps in the proof of Theorem \ref{thm.hommeasures} to obtain:

\begin{theorem}
\label{thm.robusthommeasures}
There exists $k_0>0$ such that for any $k\in [k_0, +\infty)$ there exists a $C^2$-neighborhood $\mathcal{U}$ of $f_k$, with the following property: let $g\in \mathcal{U}$, if $\mu$ and $\nu$ are two $g$-ergodic measures that verify
\[
\min \{\lambda^+(\mu), \lambda^+(\nu), -\lambda^-(\mu), -\lambda^-(\nu)\} >(1-\delta) \log k,
\]
then $\mu$ is homoclinically related to $\nu$. Furthermore, for any $l\in \N$, let $\mu_1, \cdots, \mu_j$ and $\nu_1, \cdots, \nu_s$ be the ergodic decomposition of $(g^l,\mu)$ and $(g^l,\nu)$, respectively. Then, any two measures $m_1, m_2 \in \{\mu_1, \cdots, \mu_j, \nu_1, \cdots, \nu_s\}$ are homoclinically related for $g^l$ as well. 
\end{theorem}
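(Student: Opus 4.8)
The plan is to observe that Theorem \ref{thm.robusthommeasures} is nothing but the $C^2$-stable version of Theorem \ref{thm.hommeasures}: its proof should consist of checking that every ingredient used to prove Theorem \ref{thm.hommeasures} survives a small $C^2$-perturbation of $f_k$ with the \emph{same} nominal constants, and then repeating the final crossing argument word for word. So I would first fix $k$ large enough that $f_k$ satisfies all the conclusions of Section \ref{intmnfld}, and then assemble the neighborhood $\mathcal{U}$ as an intersection of the robustness neighborhoods coming from Remarks \ref{remark.robust1} and \ref{remark.robust2}.

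Concretely, the first step is to record that, by Remark \ref{remark.robust1}, there is a $C^1$-neighborhood $\mathcal{U}_1$ of $f_k$ on which Lemmas \ref{setzs}, \ref{measure}, \ref{cone1} and \ref{esqueci} hold for every $g\in\mathcal{U}_1$ and every $g$-ergodic $\mu$ satisfying \eqref{eq.measurelargeexponents}: the sets $Z_\mu(g)$, $X_\mu(g)$ defined via the Oseledets splitting of $g$ (the critical regions $G_1\subset G_2$ depending only on $k$) still satisfy $\mu(Z_\mu)\ge\frac{1-7\delta}{1+7\delta}$, $\mu(X_\mu)>0$, $Z_\mu\subset G_1$, the inclusions $E^\pm_p\subset\C^{hor}_{\theta_1^{-1}},\C^{ver}_{\theta_1^{-1}}$ of Lemma \ref{cone1}, and the cone-invariance plus $k^{1/2}$-expansion on $G_2$ of Lemma \ref{esqueci}. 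This is legitimate because every inequality in those lemmas is strict with a gap equal to a fixed positive power of $k$ (e.g. $k^{3/5+1/100}<k^{4/5}$, $8\pi k^{7/10}+4<k^{4/5}$), and because the norm bound \eqref{eq.c1norm} is a $C^1$-open condition. The second step is to shrink to a $C^2$-neighborhood $\mathcal{U}\subset\mathcal{U}_1$ on which Proposition \ref{sizemnfld} (hence Proposition \ref{prop.estimateprop1}) holds for $g$, using Remark \ref{remark.robust2}: for $p\in Z_\mu(g)$ one still gets local stable/unstable manifolds $W^\mp_{r_0}(p)$ of length $\ge r_0=k^{-7}$, tangent to $\C^{ver}_{4/\theta_1}$, $\C^{hor}_{4/\theta_1}$ respectively. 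The one place where I have to be a little careful is that the original proof of Proposition \ref{sizemnfld} used $|\det Df|=1$, whereas $g\in\mathrm{Diff}^2$ need not be conservative; but for $g$ close to $f_k$ the Jacobian $|\det Dg|$ is as close to $1$ as we like, so $\|Dg^n|_E\|^2/|\det Dg^n|$ differs from $\|Dg^n|_E\|^2$ by at most a factor $e^{\pm cn}$ with $c$ small, which the power-of-$k$ slack in the choice of $\sigma,\tilde\sigma,\rho,\tilde\rho,C_0$ absorbs; and the $C^2$-control \eqref{eq.c1c2} of $\|D^2 g^{-1}\|$ is a $C^2$-open condition, which is precisely why the neighborhood must be taken in the $C^2$ topology.

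With those ingredients in hand, the third step is to rerun the proof of Lemma \ref{bigmnfld} for $g\in\mathcal{U}$ with the same constants: for $p\in X_\mu(g)$ and $n\ge15$, the forward images of $W^+_{r_0}(p)$ keep their tangent directions in $\C^{hor}_{\theta_2}$ while they stay in $G_2$, cross $\partial G_2$ after at most $15$ iterates (the $k^{1/2}$-expansion against the diameter of $G_2$), then contain a subcurve joining the two boundaries of a component of the buffer region $\{k^{-3/10}\le|x-\tfrac{1}{4}|\le2k^{-3/10}\}\cup\{k^{-3/10}\le|x-\tfrac{3}{4}|\le2k^{-3/10}\}$, whose $g$-image has length $>4$; iterating gives curves $\gamma^+_n(p)\subset g^n(W^+_{r_0}(p))$ of length $>4$ tangent to $\C^{hor}_{\theta_2}$, and symmetrically $\gamma^-_{-n}(p)\subset g^{-n}(W^-_{r_0}(p))$ of length $>4$ tangent to $\C^{ver}_{\theta_2}$, for all $n\ge15$. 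Since $\theta_2$ is tiny, such a curve is a graph over the whole $x$-circle (resp. $y$-circle), so a long horizontal one and a long vertical one must meet, and they meet transversely since $\C^{hor}_{\theta_2}\cap\C^{ver}_{\theta_2}=\{0\}$. The fourth and final step is then identical to the end of the proof of Theorem \ref{thm.hommeasures}: given $g$-ergodic $\mu,\nu$ with all four exponents $>(1-\delta)\log k$, pick $p_\mu\in X_\mu(g)$ and $p_\nu\in X_\nu(g)$ (nonempty by Lemma \ref{measure}), note $g^n(W^+_{r_0}(p_\mu))\pitchfork g^{-n}(W^-_{r_0}(p_\nu))\ne\emptyset$ and the symmetric statement for $n>15$, hence $\mu\sim\nu$ in the sense of Definition \ref{def.measuredhomo}; for the $g^l$-statement, observe that each ergodic component of $(g^l,\mu)$ or $(g^l,\nu)$ charges $X_\mu(g)\cup X_\nu(g)$ and that the Pesin stable/unstable manifolds of $g$ and $g^l$ coincide, so the same crossing (valid for every $n\ge15$) gives homoclinic relatedness for $g^l$.

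I expect the only genuine difficulty to be \emph{uniformity}: exhibiting a single $C^2$-neighborhood $\mathcal{U}$ that simultaneously works for the a priori infinitely many measures $\mu$ satisfying \eqref{eq.measurelargeexponents}, and verifying that none of the constants $r_0=k^{-7}$, $\theta_1,\theta_2$, $T$, the crossing time $\le15$ or the length threshold $4$ degrades under perturbation. This turns out to be automatic, since every bound in Sections \ref{intmnfld} and \ref{sec.homrelation} was deliberately set up with strict inequalities separated by a fixed positive power of $k$, and the operative hypotheses — the $C^1$-bound \eqref{eq.c1norm}, the $C^2$-bound \eqref{eq.c1c2}, and $|\det Dg|\approx1$ — are open conditions holding uniformly (independently of $\mu$) on a small enough $C^2$-ball around $f_k$.
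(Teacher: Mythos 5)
Your proposal is correct and follows essentially the same route as the paper, which simply invokes the $C^2$-robustness of Lemma \ref{esqueci} and Proposition \ref{sizemnfld} (Remarks \ref{remark.robust1} and \ref{remark.robust2}) and then repeats the proof of Theorem \ref{thm.hommeasures} verbatim. Your extra observation about $|\det Dg|$ no longer being exactly $1$ for non-conservative perturbations, and why the power-of-$k$ slack absorbs this, is a detail the paper leaves implicit but is handled correctly.
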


By Corollary \ref{cor.entropyduarte}, if $g$ is sufficiently $C^1$-close to $f_k$, we have that $h_{\mathrm{top}}(g)>(1-\delta)\log k$. Applying Theorem \ref{thm.robusthommeasures}, if $g$ is sufficiently $C^2$-close to $f_k$ then any two measures of high entropy $\mu, \nu \in \mathbb{P}_{\mathrm{high}}(g)$ are homoclinically related. In particular, if $\mu$ is a m.m.e. for $g$ then $\mathbb{P}_{\mathrm{high}}(g)$ is contained in the measured homoclinic class of $\mu$. The conclusion of Theorem \ref{thm.maintheoremuniqueness} then follows by the same arguments as for $f_k$. 

\section{Growth and equidistribution of periodic points: proof of Theorem \ref{thm.maintheoremperiodicpoints}}
\label{sec.thmb}

The goal of this section is to prove Theorem \ref{thm.maintheoremperiodicpoints}. Let us first recall a recent result by Burguet that we will use.

\begin{theorem}[Main Theorem in \cite{burguet}]
\label{thm.burguetmain}
Let $f$ be a $C^{\infty}$-diffeomorphism of a compact surface with positive topological entropy. Then for any $\rho\in (0,h_{\mathrm{top}}(f))$, it holds that:
\begin{enumerate}
\item $\displaystyle \limsup_{n\to +\infty} \frac{1}{n} \log \# \mathrm{Per}^{\rho}_n(f) = h_{\mathrm{top}}(f)$;
\item for any increasing sequence $(n_l)_{l\in \N}$ of positive integers such that 
\[
\displaystyle \lim_{l\to +\infty} \frac{1}{n_l} \log \# \mathrm{Per}^{\rho}_{n_l}(f) = h_{\mathrm{top}}(f),
\] 
any $\mathrm{weak}^*$-limit of the sequence $\left( \frac{1}{\#\mathrm{Per}^{\rho}_{n_l}(f)} \displaystyle \sum_{p\in \mathrm{Per}^{\rho}_{n_l}(f)} \delta_p \right)$ is a m.m.e. for $f$.\\
\end{enumerate}
\end{theorem}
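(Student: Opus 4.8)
The plan is to establish the two assertions separately. Assertion (1) splits into a lower bound and a matching upper bound on $\limsup_n \frac1n\log\#\mathrm{Per}^\rho_n(f)$, and assertion (2) is a rigidity statement that I would extract from the same ingredients used for the upper bound. The two pillars are Katok's horseshoe approximation theorem, which produces the lower bound and a wealth of $\rho$-hyperbolic periodic orbits, and, on the other side, the smooth symbolic dynamics of Sarig \cite{sarig} refined by Buzzi--Crovisier--Sarig \cite{buzzicrovisiersarig} together with the asymptotic $h$-expansiveness of $C^\infty$ surface diffeomorphisms (a consequence of Yomdin's theory); this second pillar is where the $C^\infty$ hypothesis is essential.

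For the lower bound, note first that $f$ being $C^\infty$ has, by Newhouse \cite{newhouse}, an ergodic measure of maximal entropy $\mu$, so $h_\mu(f)=h_{\mathrm{top}}(f)>0$; Ruelle's inequality \cite{ch1barreirapesinbook} applied to $f$ and to $f^{-1}$ forces $\min\{\lambda^+(\mu,f),-\lambda^-(\mu,f)\}\ge h_\mu(f)=h_{\mathrm{top}}(f)>\rho$. I would then apply Katok's horseshoe approximation theorem inside a Pesin block of $\mu$: for every $\varepsilon>0$ there is a horseshoe $\Lambda_\varepsilon$ with $h_{\mathrm{top}}(f|_{\Lambda_\varepsilon})>h_{\mathrm{top}}(f)-\varepsilon$ all of whose invariant measures have Lyapunov exponents $\varepsilon$-close to those of $\mu$; for $\varepsilon$ small enough every periodic point of $\Lambda_\varepsilon$ is then $\rho$-hyperbolic. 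Since a horseshoe contains a transitive subshift of finite type of almost full entropy, $\#\mathrm{Per}^\rho_n(f)\ge\#\mathrm{Per}_n(f|_{\Lambda_\varepsilon})\ge e^{n(h_{\mathrm{top}}(f)-2\varepsilon)}$ for infinitely many $n$, and letting $\varepsilon\to0$ gives $\limsup_n\frac1n\log\#\mathrm{Per}^\rho_n(f)\ge h_{\mathrm{top}}(f)$.

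For the upper bound I would fix $\rho$ and use Sarig's countable Markov partition adapted to the orbits with Lyapunov exponents $>\rho$: there is a topological Markov shift $\Sigma$ and a H\"older factor map $\pi\colon\Sigma\to\T^2$, finite-to-one in the strong sense of \cite{buzzicrovisiersarig} (in particular of bounded multiplicity over periodic points), whose image carries every ergodic measure with exponents $>\rho$; hence every orbit of $\mathrm{Per}^\rho_n(f)$ lifts to a periodic orbit of $\Sigma$ of the same period with multiplicity $\le C=C(\rho)$, so $\#\mathrm{Per}^\rho_n(f)\le C\,\#\mathrm{Per}_n(\Sigma)$. Counting the periodic orbits of $\Sigma$ that stay in a fixed finite sub-shift $\Sigma_N$ (rate $\le h_{\mathrm{top}}(\Sigma_N)\le h_{\mathrm{top}}(f)$) and those that escape every finite sub-shift (rate $\le$ the entropy at infinity $h_\infty(f)\le h_{\mathrm{top}}(f)$) gives $\limsup_n\frac1n\log\#\mathrm{Per}^\rho_n(f)\le h_{\mathrm{top}}(f)$, which with the previous paragraph proves (1). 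For assertion (2), let $(n_l)$ realize the limit and $\nu=\lim_l \frac{1}{\#\mathrm{Per}^\rho_{n_l}(f)}\sum_{p}\delta_p$ along a subsequence; $\nu$ is $f$-invariant. Thinning $\mathrm{Per}^\rho_{n_l}(f)$ to an $(n_l,\varepsilon)$-separated family of the same exponential size --- which is where one uses the uniformity furnished by $\rho$-hyperbolicity via Pesin blocks at a fixed scale --- the Misiurewicz--Katok inequality yields $h_\nu(f)\ge h_{\mathrm{top}}(f)-h^*(\varepsilon)$, where $h^*(\varepsilon)$ is the topological tail entropy at scale $\varepsilon$; since $f$ is $C^\infty$ it is asymptotically $h$-expansive, so $h^*(\varepsilon)\to0$, whence $h_\nu(f)=h_{\mathrm{top}}(f)$, i.e. $\nu$ is an m.m.e. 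Equivalently, one can push the symbolic picture forward: using the strict inequality $h_\infty(f)<h_{\mathrm{top}}(f)$, valid for $C^\infty$ by the Buzzi--Crovisier--Sarig analysis, the escaping mass of the empirical measures is $e^{-n_l(h_{\mathrm{top}}(f)-h_\infty(f))+o(n_l)}\to0$, and on the mixing components of $\Sigma_N$ the periodic-orbit measures equidistribute, by Parry, to measures whose $\pi$-images are m.m.e.'s of $f$, so $\nu$ lies in the closed convex hull of the m.m.e.'s, which consists of m.m.e.'s.

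I expect the main obstacle to be the upper bound together with the equidistribution, i.e. controlling the multiplicity of the coding over periodic points and --- crucially --- thinning the $\rho$-hyperbolic periodic points of period $n$ to genuinely $(n,\varepsilon)$-separated families without losing exponential count (equivalently, the strict estimate $h_\infty(f)<h_{\mathrm{top}}(f)$). This is the only place the full strength of $C^\infty$-smoothness enters, through Yomdin's theorem on the sub-exponential volume growth of iterated curves, and it is the genuine technical core; the rest is an assembly of Katok theory, Sarig's Markov partitions and the thermodynamic formalism of countable Markov shifts.
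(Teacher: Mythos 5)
First, a point of comparison: the paper does not prove this statement at all. Theorem \ref{thm.burguetmain} is quoted verbatim from Burguet \cite{burguet} and used as a black box in Section \ref{sec.thmb}, so the only argument to measure yours against is Burguet's own. Your skeleton does match its broad shape: Katok's horseshoe theorem applied to an ergodic m.m.e. (which exists by Newhouse and is $\rho$-hyperbolic by Ruelle's inequality) gives the lower bound in (1), and a Misiurewicz-type estimate on weak*-limits of the periodic empirical measures, with the tail entropy killed by asymptotic $h$-expansiveness of $C^\infty$ maps, gives the upper bound and (2) \emph{provided} one can replace $\mathrm{Per}^{\rho}_n(f)$ by $(n,\varepsilon)$-separated families of comparable exponential size.

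That proviso is the entire content of Burguet's theorem, and it is exactly the step you defer. Your stated mechanism --- ``the uniformity furnished by $\rho$-hyperbolicity via Pesin blocks at a fixed scale'' --- does not work: membership in $\mathrm{Per}^{\rho}_n(f)$ constrains only the Lyapunov exponents of the orbit, not the Pesin constants or the angles between stable and unstable directions, so these periodic points need not lie in any fixed Pesin block, and nothing a priori prevents exponentially many of them from clustering in a single $(n,\varepsilon)$-Bowen ball. Burguet proves this separation property (his ``periodic expansiveness'': the number of $\rho$-hyperbolic period-$n$ points in one Bowen ball grows subexponentially, with rate tending to $0$ as $\varepsilon\to 0$) by Yomdin-style reparametrization of the dynamics, and this, not Pesin theory, is where $C^\infty$ enters; naming Yomdin as ``the genuine technical core'' without an argument leaves both the upper bound in (1) and the entropy estimate for (2) unproved. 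Your alternative symbolic route does not repair this: in a countable-state Markov shift the total number of period-$n$ orbits is not controlled by the Gurevich entropy (only orbits through a fixed symbol are), so splitting into ``orbits inside a finite subshift'' and ``escaping orbits'' does not yield $\limsup_n \frac{1}{n}\log\#\mathrm{Per}^{\rho}_n(f)\le h_{\mathrm{top}}(f)$ without a further counting argument; and the strict inequality between entropy at infinity and $h_{\mathrm{top}}(f)$ for $C^\infty$ surface diffeomorphisms is not contained in the cited work \cite{buzzicrovisiersarig} --- it is a later and substantially harder result --- so it cannot be invoked as a known input here.
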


Our goal in this section is to prove that for a $C^{\infty}$-diffeomorphism which is $C^2$-close to the standard map, we can replace in item $1$ of Theorem \ref{thm.burguetmain} by an actual limit. As we will see, the equidistribution part of Theorem \ref{thm.maintheoremperiodicpoints} will follow from that.

\begin{remark}
In Corollary $1.1$ of \cite{burguet}, Burguet obtained the following result: there exists a number $p\in \N$ such that for any $\rho \in (0,h_{\mathrm{top}}(f))$ it holds that
\[
\displaystyle \lim_{l\to +\infty} \frac{1}{lp} \log \#\mathrm{Per}^{\rho}_{lp}(f) = h_{\mathrm{top}}(f).
\]

The proof  of Corollary $1.1$ in \cite{burguet} does not imply that we may take $p$ equal $1$ in the case that there is only one m.m.e. which is Bernoulli.     
\end{remark}

We will need the following version of Katok's horseshoe theorem, which can be found in \cite{buzzicrovisiersarig}. In what follows, we will state the theorem for surface, but it is also valid for any dimension.

\begin{theorem}[Theorem $2.12$ in \cite{buzzicrovisiersarig}]
\label{thm.katok}
Let $f\in \mathrm{Diff}^r(S)$, with $r>1$, and let $\mu$ be an ergodic, non atomic, hyperbolic invariant measure. Then for any $\varepsilon>0$, there exists a basic set $\Lambda$ such that
\begin{enumerate}
\item $h_{\mathrm{top}}(f|_{\Lambda}) > h_{\mu}(f)- \varepsilon$;
\item for any $\nu \in \mathbb{P}_{\mathrm{e}}(f|_{\Lambda})$ we have that $|\lambda^*(\mu) - \lambda^*(\nu)|< \varepsilon$, for $* = -$ and $ +$;
\item if $(f,\mu)$ is mixing, then $\Lambda$ can be assumed to be topologically mixing.
\end{enumerate}

\end{theorem}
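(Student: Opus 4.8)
The plan is to follow Katok's classical argument, in the form adapted to nonuniformly hyperbolic surface maps (as in \cite{buzzicrovisiersarig, ch1barreirapesinbook}). The starting point is Pesin theory: since $\mu$ is ergodic, hyperbolic and non-atomic, for a suitable index $\ell$ there is a Pesin block $\Lambda_\ell$ with $\mu(\Lambda_\ell)>0$ on which the local stable and unstable manifolds have size bounded below by some $r_\ell>0$, the angle between $E^s$ and $E^u$ is bounded below, and the one-step hyperbolicity estimates along orbits are uniform, with contraction/expansion rates as close as we like to $\lambda^-(\mu)$ and $\lambda^+(\mu)$ at the cost of shrinking $r_\ell$. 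Fix $\varepsilon>0$; the set $\Lambda$ will be built inside a small neighborhood of $\mathrm{supp}(\mu)$.

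First I would invoke Katok's entropy characterization via $(n,\delta)$-separated sets that spend a definite fraction of the time in a set of measure close to $1$: by Birkhoff's theorem and the Shannon--McMillan--Breiman theorem, for large $n$ one produces a family $E_n$ of about $e^{n(h_\mu(f)-\varepsilon/2)}$ orbit segments of length $n$ that are pairwise $(n,\delta)$-separated, whose endpoints — and a definite proportion of whose iterates — lie in $\Lambda_\ell$, and along which the hyperbolicity rates are within $\varepsilon$ of those of $\mu$. After a further thinning one may assume all these segments begin and end in one small Pesin chart contained in $\Lambda_\ell$.

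The heart of the proof is then a shadowing/graph-transform construction. Because the returns to $\Lambda_\ell$ carry uniform stable and unstable manifolds of size $r_\ell$ meeting at bounded-away-from-zero angles, every admissible concatenation of the chosen segments is shadowed by a genuine orbit, and the set of all bi-infinite concatenations forms a compact, $f^n$-invariant, uniformly hyperbolic, locally maximal set $\Lambda$ on which $f^n$ is conjugate to a full shift over the alphabet $E_n$ (after possibly discarding a few symbols). Counting gives $h_{\mathrm{top}}(f|_\Lambda)\ge \tfrac1n\log\#E_n>h_\mu(f)-\varepsilon$, which is item (1). For item (2), any ergodic $\nu$ supported on $\Lambda$ is, up to the $f^n$ time change, an average of measures carried by these orbit segments, which spend essentially all their time inside $\Lambda_\ell$ where the rates are within $\varepsilon$ of $\lambda^\pm(\mu)$; hence $|\lambda^\pm(\nu)-\lambda^\pm(\mu)|<\varepsilon$ after adjusting the constants. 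For item (3), if $(f,\mu)$ is mixing I would use mixing to show that for any two chosen return segments and every sufficiently large $m$ there is a connecting orbit segment of length $m$ returning to the same Pesin chart; adjoining a small fixed family of such connectors to the alphabet makes the transition graph primitive, so $f|_\Lambda$ becomes topologically mixing, while the extra symbols cost only a negligible amount of entropy.

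The step I expect to be the main obstacle is the shadowing/graph-transform construction itself: turning the concatenated pseudo-orbits into an honest locally maximal hyperbolic set with a full-shift coding, controlling the nonlinear distortion in the Pesin charts, verifying that every concatenation is admissible (this is exactly where the uniform size $r_\ell$ of the invariant manifolds and the angle lower bound are used), and ensuring that the entropy loss is at most $\varepsilon$. The mixing refinement in item (3) is technically delicate but conceptually routine once this basic construction is available.
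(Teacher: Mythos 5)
This statement is not proved in the paper at all: it is imported verbatim as Theorem 2.12 of \cite{buzzicrovisiersarig} and used as a black box, so there is no ``paper's own proof'' to compare against. Your sketch follows the classical Katok route (Pesin block, $(n,\delta)$-separated sets via Shannon--McMillan--Breiman and Birkhoff, shadowing of concatenated segments to build a full-shift horseshoe, counting for item (1), Birkhoff-sum control for item (2), connectors for item (3)), which is indeed the standard way to obtain this statement; note that \cite{buzzicrovisiersarig} can also reach it through Sarig's countable Markov coding by extracting a finite irreducible sub-alphabet carrying most of the entropy, a genuinely different mechanism that trades the shadowing construction for the (much heavier) symbolic dynamics machinery.

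As a proof, however, what you have written is an outline rather than an argument, and you say so yourself: the entire content of the theorem sits in the step you defer --- turning the concatenations into a locally maximal uniformly hyperbolic set conjugate to a full shift, with distortion control in Pesin charts. Two points deserve explicit care if you were to complete it. For item (2) you need more than ``a definite proportion'' of iterates in the Pesin block: you must select segments whose finite-time Birkhoff sums of $\log\|Df|_{E^{u}}\|$ and $\log\|Df|_{E^{s}}\|$ over the \emph{whole} length-$n$ window are within $\varepsilon$ of $n\lambda^{\pm}(\mu)$, and then check that the hyperbolic splitting of the constructed horseshoe is a small perturbation of the Oseledets splitting along the shadowed orbits, so that these sums control $\int\log\|Df|_{E^{u}_{\Lambda}}\|\,d\nu$ for every invariant $\nu$ on $\Lambda$, including the contribution of the gluing blocks. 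For item (3), mixing of $(f,\mu)$ gives $\mu(V\cap f^{-m}V)>0$ for all large $m$, which is the right input, but you still must verify that the adjoined connectors are compatible with the shadowing construction and that the resulting transition matrix is primitive. None of this is a wrong turn --- it is the standard proof --- but as written the proposal establishes the strategy, not the theorem; for a result cited from the literature that is an acceptable level of detail.
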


\begin{proof}[Proof of Theorem \ref{thm.maintheoremperiodicpoints}]

Let $k$, $\mathcal{U}$ be as in Theorem \ref{thm.maintheoremuniqueness}. Let $g\in \mathcal{U} \cap \mathrm{Diff}^{\infty}(\T^2)$ and let $\mu_g$ be the unique m.m.e. for $g$. Recall that from the proof of Theorem \ref{thm.maintheoremuniqueness}, we have that $h_{\mu_g}(g) > (1-\delta) \log k$, where we had fixed $\delta = \frac{1}{600}$. Fix $\rho\in (0,h_{top}(g))$. Take $\varepsilon \in (0, h_{\mu_g}(g) - (1-\delta) \log k)$ small and let $\Lambda_{\varepsilon}$ be the set given by Theorem \ref{thm.katok} applied to $\mu_g$.

By item 2 of Theorem \ref{thm.katok}, we obtain that $\mathrm{Per}_n(g|_{\Lambda_{\varepsilon}}) \subset \mathrm{Per}^{\rho}_n(g)$, for any $n\in \N$. By item $3$ of Theorem \ref{thm.katok}, we may assume that $\Lambda_{\varepsilon}$ is topologically mixing. Bowen proved  in Lemma $4$ of \cite{bowen74} that a topologically mixing basic set verifies
\[
\displaystyle \lim_{n\to +\infty} \frac{1}{n} \log \# \mathrm{Per}_n(g|_{\Lambda_{\varepsilon}}) = h_{\mathrm{top}}(g|_{\Lambda_{\varepsilon}}).
\]
Hence, by item $1$ of Theorem \ref{thm.katok} and since $\mu_g$ is the m.m.e. for $g$, we have
\[
\displaystyle \lim_{n\to +\infty} \frac{1}{n} \log \# \mathrm{Per}_n(g|_{\Lambda_{\varepsilon}}) > h_{\mathrm{top}}(g)-\varepsilon.
\]

Thus,
\begin{equation}
\label{eq.eq1}
\displaystyle \liminf_{n\to +\infty} \frac{1}{n} \log \# \mathrm{Per}^{\rho}_n(g) \geq \liminf_{n \to +\infty} \frac{1}{n} \log \# \mathrm{Per}_n(g|_{\Lambda_{\varepsilon}}) > h_{\mathrm{top}}(g) - \varepsilon.
\end{equation}
Since \eqref{eq.eq1} holds true for every $\varepsilon>0$, we conclude that 
\[
\displaystyle \liminf_{n\to +\infty} \frac{1}{n} \log \# \mathrm{Per}^{\rho}_n(g)\geq h_{\mathrm{top}}(g).
\]

By item 1 of Theorem \ref{thm.burguetmain}, we obtain
\[
\displaystyle \lim_{n\to +\infty} \frac{1}{n} \log \# \mathrm{Per}^{\rho}_n(g) = h_{\mathrm{top}}(g).
\]

Since $\mu_g$ is the unique m.m.e. for $g$, by item $2$ of Theorem \ref{thm.burguetmain}, the sequence of probability measures
\[
\displaystyle \left( \frac{1}{\# \mathrm{Per}^{\rho}_n(g)} \sum_{p\in \mathrm{Per}^{\rho}_n(g)} \delta_p \right)_{n\in \N}
\]
converges to $\mu_g$ in the $\mathrm{weak}^*$-topology.
\end{proof}

\section{Further properties of the m.m.e.: proof of Theorems \ref{thm.maintheoremdensity} and \ref{thm.genericparamenters}} \label{sec.thmcd}

In this section we prove Theorems \ref{thm.maintheoremdensity} and \ref{thm.genericparamenters}. For this, we will use several of the properties stated in Section \ref{sec.standardback}.

The relation between Lyapunov exponents, the entropy and the Hausdorff dimension of a hyperbolic measure have been well studied (see for instance \cite{young82, barreirapesins}). We will need the following formula of the dimension of the measure for surfaces.

\begin{theorem}[Main theorem in \cite{young82}]
\label{thm.young}
Let $f:S\to S$ be a $C^2$-diffeomorphism of a compact surface $S$. Suppose that $\mu$ is an ergodic measure such that $h_{\mu}(f)>0$. Then
\[
\mathrm{dim}_H(\mu) = h_{\mu}(f)\left(\frac{1}{\lambda^+(\mu)} - \frac{1}{\lambda^-(\mu)}\right).
\]
\end{theorem}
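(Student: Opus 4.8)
The plan is to establish the stronger fact that $\mu$ is \emph{exact dimensional}, i.e. that for $\mu$-a.e. $x$ the limit $d_\mu(x) := \lim_{r\to 0}\frac{\log\mu(B(x,r))}{\log r}$ exists and equals the constant $D := h_\mu(f)\big(\tfrac{1}{\lambda^+(\mu)}-\tfrac{1}{\lambda^-(\mu)}\big)$; the equality $\mathrm{dim}_H(\mu)=D$ then follows from standard density arguments, the inequality $\mathrm{dim}_H(\mu)\le D$ from a Vitali covering argument applied where $d_\mu\le D$, and $\mathrm{dim}_H(\mu)\ge D$ from the mass distribution (Frostman) principle applied where $d_\mu\ge D$. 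Observe first that, since $h_\mu(f)=h_\mu(f^{-1})>0$, Ruelle's inequality applied to both $f$ and $f^{-1}$ forces $\lambda^+(\mu)>0>\lambda^-(\mu)$, so $\mu$ is hyperbolic of saddle type and $D>0$; this is exactly the regime in which Pesin theory and the stable/unstable Pesin manifolds of Definition \ref{pesinmanifolddef} are available.

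Next I would fix a small $\varepsilon>0$ and set up the non-uniformly hyperbolic structure. Pesin theory for $C^2$ (indeed $C^{1+\alpha}$) surface diffeomorphisms produces a compact Pesin block $\Lambda_\varepsilon$ with $\mu(\Lambda_\varepsilon)>1-\varepsilon$ and a uniform radius $r_\varepsilon>0$ such that through every $x\in\Lambda_\varepsilon$ pass $C^1$ local stable and unstable curves $W^s_{\mathrm{loc}}(x),W^u_{\mathrm{loc}}(x)$ of length $\ge r_\varepsilon$, with the angle between them bounded below and with the Oseledets contraction/expansion estimates and bounded distortion holding with constants uniform on $\Lambda_\varepsilon$. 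Following Ledrappier--Strelcyn I would choose measurable partitions $\xi^u$ and $\xi^s$ subordinate to the unstable and stable laminations, and let $\mu^u_x$, $\mu^s_x$ be the associated conditional measures, supported on $W^u_{\mathrm{loc}}(x)$ and $W^s_{\mathrm{loc}}(x)$.

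The third step is to compute the dimensions of these one-dimensional conditionals. I claim that for $\mu$-a.e. $x$, $\mu^u_x$ is exact dimensional on $W^u_{\mathrm{loc}}(x)$ with dimension $\delta^u := h_\mu(f)/\lambda^+(\mu)$. The engine is the relative Shannon--McMillan--Breiman theorem for $\bigvee_{j=0}^{n-1}f^{j}\xi^u$, whose atoms restricted to $W^u_{\mathrm{loc}}(x)$ are the Bowen balls $B^u_n(x)$ of $f$ along the unstable curve: one gets $-\tfrac1n\log\mu^u_x(B^u_n(x))\to h_\mu(f)$ (on a surface all the entropy is carried by the unstable direction, via the Ledrappier--Strelcyn/Rokhlin entropy formula), while Oseledets plus the uniform bounded distortion on $\Lambda_\varepsilon$ give that the length of $B^u_n(x)$ is $e^{-n\lambda^+(\mu)+o(n)}$. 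Comparing the two asymptotics yields $\mu^u_x\big(\text{ball of radius }\rho\text{ in }W^u_{\mathrm{loc}}(x)\big)=\rho^{\,\delta^u+o(1)}$. Applying the identical argument to $f^{-1}$ gives that $\mu^s_x$ is exact dimensional with dimension $\delta^s := -\,h_\mu(f)/\lambda^-(\mu)$, so that $\delta^s+\delta^u = h_\mu(f)\big(\tfrac{1}{\lambda^+(\mu)}-\tfrac{1}{\lambda^-(\mu)}\big)=D$.

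The final step — and the main obstacle — is to glue the two transverse estimates into a statement about the two-dimensional measure $\mu$. Working inside a Pesin ``rectangle'' $R\subset\Lambda_\varepsilon$ with $\mu(R)>0$ (a set foliated by unstable pieces, each carrying a stable piece), one estimates $\mu(B(x,r))$ for $x\in R$ from above by disintegrating $\mu$ over $\xi^u$: $\mu(B(x,r))\le\int \mu^s_y\big(W^s_{\mathrm{loc}}(y)\cap B(x,r)\big)\,d\hat\mu(y)$ over the unstable leaves meeting $B(x,r)$, where the inner factor is $\le r^{\,\delta^s-C\varepsilon}$ by Step 3 and the transverse measure of the relevant set of leaves is $\le r^{\,\delta^u-C\varepsilon}$; this last bound uses crucially that the stable holonomies between unstable leaves are \emph{uniformly H\"older} on $\Lambda_\varepsilon$, which suffices because H\"older holonomies distort the transverse conditional by only a subexponential factor. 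A symmetric argument, producing dynamical rectangles of definite $\mu$-measure inside small balls, bounds $\overline d_\mu(x)$ from above by $\delta^s+\delta^u+C\varepsilon$ on $R$. Letting $\varepsilon\to 0$ along an exhausting sequence of Pesin blocks and using ergodicity, one obtains $d_\mu(x)=\delta^s+\delta^u=D$ for $\mu$-a.e. $x$. The delicacy here, and the reason the surface case is special, is precisely this gluing: $\mu$ need not be the product of $\mu^s_x$ and $\mu^u_x$ and the holonomies need not be absolutely continuous, so one cannot literally multiply measures — the argument survives only because, for computing dimensions, the H\"older (hence subexponential) holonomy distortion and the non-uniformity of $\Lambda_\varepsilon$ can all be absorbed into $o(1)$ errors; in higher dimension this is the much harder content of the Ledrappier--Young and Barreira--Pesin--Schmeling theory.
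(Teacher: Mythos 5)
First, note that the paper gives no proof of this statement: it is quoted verbatim as the main theorem of \cite{young82} and used as a black box in Section 7, so there is no internal argument to compare yours against. Your strategy --- exact dimensionality via the one-dimensional conditional measures $\mu^u_x,\mu^s_x$ on Pesin manifolds, with $\dim\mu^u_x=h_\mu(f)/\lambda^+(\mu)$ and $\dim\mu^s_x=-h_\mu(f)/\lambda^-(\mu)$, followed by a gluing step --- is the Ledrappier--Young/Barreira--Pesin--Schmeling route specialized to surfaces, and your Steps 1--3 are sound modulo the standard references you invoke. The genuine gap sits exactly where you flag ``the main obstacle,'' and the justification you offer there does not work. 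To bound $\mu(B(x,r))$ from above you need the \emph{quotient} (transverse) measure of the set of stable leaves meeting $B(x,r)$ to be at most $r^{\delta^u-C\varepsilon}$; but that quotient measure is the projection of $\mu$ along the stable lamination onto an unstable transversal, which is \emph{not} the conditional measure $\mu^u_x$, and identifying its dimension with $\delta^u$ is precisely the ``almost product structure'' problem. Your claim that H\"older holonomies ``distort the transverse conditional by only a subexponential factor'' is false: a H\"older map of exponent $\theta<1$ can multiply a Hausdorff dimension by $\theta$, which changes the exponent rather than contributing an $o(1)$ error, and inside a Pesin block the stable holonomies between unstable transversals are in general only H\"older (not Lipschitz, not absolutely continuous with controlled Jacobian). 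Making this step rigorous is essentially the content of \cite{barreirapesins}, where it is achieved by a Borel-density/measure-comparison argument on Pesin blocks, not by holonomy regularity.

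It is worth recording that Young's own proof sidesteps the issue entirely. She works with two-sided dynamical rectangles $B(x;n,m)=\{y: d(f^iy,f^ix)\le\delta,\ -m\le i\le n\}$, which up to boundary effects are atoms of $\bigvee_{i=-m}^{n}f^{-i}\mathcal{P}$ for a suitable finite partition $\mathcal{P}$; the Shannon--McMillan--Breiman theorem applied to this two-sided refinement gives $\mu(B(x;n,m))=e^{-(n+m)(h_\mu(f)+o(1))}$ directly, with no disintegration, no product structure and no holonomies. Choosing $n\sim\log(1/r)/\lambda^+(\mu)$ and $m\sim\log(1/r)/|\lambda^-(\mu)|$ and using Lyapunov charts to compare these rectangles with Euclidean balls of radius $r$ (together with a counting argument for how many atoms a ball can meet) yields both inequalities for the pointwise dimension, hence the formula. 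To complete your write-up you must either import the Barreira--Pesin--Schmeling lemma for the gluing step or replace it by this two-sided SMB argument.
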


\begin{proof}[Proof of Theorem \ref{thm.maintheoremdensity}]
Fix $\varepsilon>0$. First, if $\delta>0$ is small  and $k \in \R$ is large enough then
\begin{equation}
\label{eq.estimate}
\displaystyle \frac{(1-\delta)}{1 + \frac{\log 4\pi}{\log k}} > 1-\frac{\varepsilon}{2}.
\end{equation}

Fix some small $\delta>0$. Let $k_0\in \R$ be a number given by Corollary \ref{cor.entropyduarte} applied to $\delta$. We may also suppose that $k_0$ is large enough such that $k_0$ verifies Theorem \ref{thm.maintheoremuniqueness} and for any $k\in [k_0, +\infty)$ the inequality \eqref{eq.estimate} holds. Fix $k\in [k_0, + \infty)$ and for each diffeomorphism $g$ sufficiently $C^1$-close to $f_k$, let $\Lambda_k(g)$ be the basic set given by Corollary \ref{cor.entropyduarte}. 

By Theorem \ref{thm.maintheoremuniqueness}, if $g\in \mathrm{Diff}^2(\T^2)$ is sufficiently $C^2$-close to $f_k$, then $g$ has at most one m.m.e. Suppose that $\mu$ is an m.m.e. for $g$ and observe that $h_{\mu}(g)> (1-\delta) \log k$. By \eqref{eq.c1norm}, we also have that $h_{\mu}(g) \leq \log 4\pi k.$ By Theorem \ref{thm.young}, we obtain
\[\arraycolsep=1.2pt\def\arraystretch{2.5}
\begin{array}{rcl}
\mathrm{dim}_H(\mu) &= & \displaystyle h_{\mu}(g) \left(\frac{1}{\lambda^+(\mu)} - \frac{1}{\lambda^-(\mu)} \right)> (1-\delta)\log k \left(\frac{2}{\log 4\pi k}\right)\\ 
& = & \displaystyle 2\left( \frac{1-\delta}{1 + \frac{ \log 4\pi }{\log k}} \right) > 2\left(1-\frac{\varepsilon}{2}\right)  = 2-\varepsilon.
\end{array}
\]
This completes the proof of item $1$ from Theorem \ref{thm.maintheoremdensity}. 

Since $\Lambda_k(g)$ is a basic set, it is well known that the periodic points are dense in it, that is, $\overline{ \mathrm{Per}_h(g|_{\Lambda_k(g)})} = \Lambda_k(g)$ (see for instance \cite{newhouse2}). Since $h_{\mathrm{top}}(g|_{\Lambda_k(g)})> (1-\delta) \log k$, take $\nu$ to be an ergodic measure supported in $\Lambda_k(g)$ such that $h_{\nu}(g)> (1-\delta) \log k$. By Theorem \ref{thm.robusthommeasures}, we have that the measure $\nu$ is homoclinically related with $\mu$.

By Theorem \ref{thm.criteriabcs}, there exists a hyperbolic periodic orbit $\mathcal{O}$ which is homoclinically related to $\mu$ and such that $\mathrm{supp}(\mu) = \mathrm{HC}(\mathcal{O})$. This implies that $\nu$ is also homoclinically related with $\mathcal{O}$. Since $\Lambda_k(g)$ is a basic set, it is easy to conclude that every periodic orbit in $\mathrm{Per}_h(g|_{\Lambda_k(g)})$ is homoclinically related to $\mathcal{O}$. Hence, $\Lambda_k(g) \subset \mathrm{HC}(\mathcal{O})$. By Corollary \ref{cor.entropyduarte}, the set $\Lambda_k(g)$ is $\frac{8}{k^{\frac{1}{3}}}$-dense in $\T^2$, and this concludes the proof of Theorem \ref{thm.maintheoremdensity}.

\end{proof}

\begin{proof}[Proof of Theorem \ref{thm.genericparamenters}]

Let $k_0$ be large enough such that Theorems \ref{thm.maintheoremuniqueness} and \ref{thm.gorod} holds. Let $\mathcal{R} \subset [k_0, + \infty)$ be the set of parameters given by Theorem \ref{thm.gorod}, and fix $k\in \mathcal{R}$. 

Let $\Lambda_k^{(0)} \subset \Lambda_k^{(1)} \subset \cdots $ be the sequence of basic sets and $\Omega_k$ be given by the conclusion of Theorem \ref{thm.gorod} for $f_k$. Let also $\mu$ be the unique m.m.e. for $f_k$, given by Theorem \ref{thm.maintheoremuniqueness}.

Arguing similarly as in the proof of Theorem \ref{thm.maintheoremdensity}, there exists a hyperbolic periodic orbit $\mathcal{O}$ which is homoclinically related to $\mu$ and such that for any $n\in \N$ we have
\[
\Lambda_k^{(n)} \subset \mathrm{HC}(\mathcal{O}) = \mathrm{supp}(\mu).
\]
In particular, since $\Omega_k = \overline{ \bigcup_{n\in \N} \Lambda_k^{(n)}}$, we obtain that $\Omega_k \subset \mathrm{supp}(\mu)$. By item $2$ of Theorem \ref{thm.gorod}, we conclude that 
\[
\mathrm{dim}_H(\mathrm{supp}(\mu)) \geq \mathrm{dim}_H(\Omega_k) = 2.
\]
Hence $\mathrm{dim}_H(\mathrm{supp}(\mu)) = 2$.
\end{proof}

\begin{remark}
Observe that in Theorem \ref{thm.genericparamenters}, we obtain that the set $\Omega_k$ is contained in the support of the unique m.m.e. for $f_k$. An interesting question would be to know if the support actually coincides with the set $\Omega_k$. In particular, this would imply that for a generic large parameter any point in the support of the unique m.m.e. is accumulated by elliptic islands.
\end{remark}

\address
\end{document}